\theoremstyle{plain}
\newtheorem{theorem}{Theorem}[section]
\newtheorem{lemma}[theorem]{Lemma}
\newtheorem{corollary}[theorem]{Corollary}
\theoremstyle{definition}
\newtheorem{example}{Example}[section]
\newtheorem{remark}{Remark}[section]
\numberwithin{equation}{section}
\DeclareMathOperator{\SHAPE}{Shape}
\newcommand{\nc}{\newcommand}
\nc{\dten}{10} \nc{\deleven}{11} \nc{\dtwelve}{12}
\nc{\dthirteen}{13} \nc{\dfourteen}{14} \nc{\dfifteen}{15}
\nc{\dsixteen}{16}\nc{\dseventeen}{17}
\begin{document}
\title[Temperley--Lieb Algebras]
{Representations of Temperley--Lieb Algebras}

\author[J. Enyang]{John Enyang}


\keywords{Temperley--Lieb algebra; Specht module; cellular algebra; Murphy basis; Jucys--Murphy operators}

\begin{abstract}
We define a commuting family of operators $T_0,T_1,\dots,T_n$ in the Temperley--Lieb algebra $\mathcal{A}_n(x)$ of type $A_{n-1}$. Using an appropriate analogue to Murphy basis of the Iwahori--Hecke algebra of the symmetric group, we describe the eigenvalues arising from the triangular action of the said operators on the cell modules of $\mathcal{A}_n(x)$.  These results are used to provide the Temperley--Lieb algebras of type $A_{n-1}$ with a semi--normal form, together with a branching law, and explicit formulae for associated Gram determinants.
\end{abstract}
\maketitle

\section{The Temperley--Lieb Algebras}
Let $n$ be a non--negative integer, $x$ be an indeterminate over $\mathbb{Z}$ and write $R=\mathbb{Z}[x]$. The Temperley--Lieb algebra $\mathcal{A}_n(x)$, defined in~\cite{temperley-lieb}, is the unital associative $R$--algebra generated by $e_1,\dots,e_{n-1}$ which are subject to the defining relations
\begin{align}
&e_i^2=xe_i,&&\text{for $i=1,\dots,n-1$;}\label{defrel:1}\\
&e_ie_{i\pm1}e_i=e_i,&&\text{for $i,i\pm1=1,\dots,n-1;$}\label{defrel:2}\\
&e_ie_j=e_je_i,&&\text{for $i,j=1,\dots,n-1$ and $|i-j|\ge 2$.}\label{defrel:3}
\end{align}
By convention, $\mathcal{A}_1(x)=R$ and, for $i=2,3,\dots$, we regard $\mathcal{A}_{i}(x)$ as the subalgebra of $\mathcal{A}_{i+1}(x)$ generated by $e_1,\dots,e_{i-1}$, giving a tower
\begin{align}
\mathcal{A}_1(x)\subseteq\mathcal{A}_2(x)\subseteq\mathcal{A}_3(x)\subseteq\cdots.\label{tower}
\end{align}
Using restriction in the tower~\eqref{tower}, we construct cellular bases, in the sense of~\cite{grahamlehrer}, for $\mathcal{A}_n(x)$ which are compatible with the action of certain commuting operators in $\mathcal{A}_n(x)$.
\section{Murphy Bases for the Temperley--Lieb Algebras}
For the purposes of these notes, a \emph{partition} of $n$ is a pair of integers $\lambda=(i,n-2i)$, where $0\le 2i\le n$. If $\lambda=(i,n-2i)$ and $\mu=(j,n-2j)$ are partitions of $n$, we will write $\lambda\unrhd\mu$ if $i\ge j$, while $\lambda\rhd\mu$ will signify that $\lambda\unrhd\mu$ and $\lambda\ne\mu$. If $\mu$ is a partition of $n-1$ and $\lambda=(i,n-2i)$ is a partition of $n$, write $\mu\to\lambda$ if $\mu=(i,n-2i-1)$ or $\mu=(i-1,n-2i+1)$. Let $\lambda$ be a partition of $n$. Define 
\begin{align*}
\mathfrak{T}_n(\lambda)=\{(\lambda^{(0)},\lambda^{(1)},\dots,\lambda^{(n)}):\text{$\lambda^{(0)}=(0,0)$, $\lambda^{(k-1)}\to\lambda^{(k)}$ for $k=1,\dots,n$, and $\lambda^{(n)}=\lambda$}\}.
\end{align*}
If $\mathfrak{s}=(\lambda^{(0)},\dots,\lambda^{(n)})\in\mathfrak{T}_n(\lambda)$, write $\lambda=\SHAPE(\mathfrak{s})$ and let $\mathfrak{s}|_k=(\lambda^{(0)},\dots,\lambda^{(k)})$, for $k=1,\dots,n$. 
We order the elements of $\mathfrak{T}_n(\lambda)$ by writing $\mathfrak{s}\unrhd\mathfrak{t}$ if $\SHAPE(\mathfrak{s}|_k)\unrhd\SHAPE(\mathfrak{t}|_k)$ for $k=1,\dots,n$, and $\mathfrak{s},\mathfrak{t}\in\mathfrak{T}_n(\lambda)$; by $\mathfrak{s}\rhd\mathfrak{t}$ we will mean that $\mathfrak{s}\unrhd\mathfrak{t}$ and $\mathfrak{s}\ne\mathfrak{t}$. If $\mathfrak{t}=(\lambda^{(0)},\dots,\lambda^{(n)})$, then $\mathfrak{t}$ may be identified with an up--down tableau: 
\begin{align*}
\mathfrak{t}\mapsto(\mathfrak{t}^{(0)},\dots,\mathfrak{t}^{(n)}),&&\text{where $\mathfrak{t}^{(k)}=\underbrace{\text{\tiny\Yvcentermath1$\yng(2)$}\cdots\text{\tiny\Yvcentermath1$\yng(1)$}}_{\text{$(k-2i)$ boxes}}$ whenever $\lambda^{(k)}=(i,k-2i)$, for $k=0,\dots,n$.}
\end{align*}
In turn, the up--down tableaux correspond to paths in the Bratteli diagram associated with the Temperley--Lieb algebras (\emph{cf.} \S2 of~\cite{ramhalv:jb}).

If $f$ is an integer, $0\le f\le [n/2]$ and $\lambda=(f,{n-2f})$, define 
\begin{align*}
m_\lambda=e_1e_3\cdots e_{2f+1}
\end{align*}
and let $\mathcal{A}_n^\lambda$ denote the two sided ideal in $\mathcal{A}_n(x)$ generated by $m_\lambda$ and 
\begin{align*}
\check{\mathcal{A}}_n^\lambda=\sum_{\mu\rhd\lambda}\mathcal{A}_n^\mu.
\end{align*}
If $n=2k+\delta$, where $\delta\in\{0,1\}$, then 
\begin{align*}
0\subset\mathcal{A}_n^{(k,n-2k)}\subset\mathcal{A}_n^{(k-1,n-2k+2)}\subset\cdots\subset\mathcal{A}_n^{(0,n)}=\mathcal{A}_n(x)
\end{align*}
is a filtration by two sided ideals of $\mathcal{A}_n(x)$.

If $i,j=1,2,\dots,n$, define $w_{i,j}\in \mathcal{A}_n$ by 
\begin{align*}
w_{i,j}=
\begin{cases}
e_ie_{i+1}\cdots e_{j-1},&\text{if $i<j$;}\\
e_{i-1}e_{i-2}\cdots e_j, &\text{if $j<i$;}\\
1, &\text{otherwise.}
\end{cases}
\end{align*}
Now, introduce elements 
\begin{align*}
\{v_\mathfrak{t}: \text{$\mathfrak{t}\in\mathfrak{T}_n(\lambda)$, $\lambda$ a partition of $n$}\},
\end{align*}
by writing $v_{\mathfrak{t}}=1$ if $\mathfrak{t}=((0,0))$ and, otherwise, if $\mathfrak{t}\in\mathfrak{T}_n(\lambda)$, where $\lambda=(f,n-2f)$, and $\mathfrak{s}=\mathfrak{t}|_{n-1}$, then 
\begin{align*}
v_\mathfrak{t}=
\begin{cases}
v_\mathfrak{s},&\text{if $\SHAPE(\mathfrak{s})=(f,n-2f-1)$}\\
w_{2f,n}v_\mathfrak{s},&\text{if $\SHAPE(\mathfrak{s})=(f-1,n-2f+1)$.}
\end{cases}
\end{align*}
Similarly, we define 
\begin{align*}
\{v_\mathfrak{t}^*: \text{$\mathfrak{t}\in\mathfrak{T}_n(\lambda)$, $\lambda$ a partition of $n$}\},
\end{align*}
by writing $v_{\mathfrak{t}}^*=1$ if $\mathfrak{t}=((0,0))$, and, otherwise, if $\mathfrak{t}\in\mathfrak{T}_n(\lambda)$, where $\lambda=(f,n-2f)$, and $\mathfrak{s}=\mathfrak{t}|_{n-1}$, then 
\begin{align*}
v_\mathfrak{t}^*=
\begin{cases}
v_\mathfrak{s}^*,&\text{if $\SHAPE(\mathfrak{s})=(f,n-2f-1)$}\\
v_\mathfrak{s}^*w_{n,2f},&\text{if $\SHAPE(\mathfrak{s})=(f-1,n-2f+1)$.}
\end{cases}
\end{align*}
The following are stated for reference in subsequent calculations.
\begin{lemma}
Suppose that $\lambda=(f,n-2f)$ is a partition of $n$, with $f>0$ and $n-2f\ge 1$. Let $\mathfrak{s},\mathfrak{t}\in\mathfrak{T}_n(\lambda)$ satisfy $\mathfrak{s}|_{n-2}=\mathfrak{t}|_{n-2}$ and $\mathfrak{s}\ne\mathfrak{t}$. Then the condition
\begin{align*}
&\SHAPE(\mathfrak{s}|_{n-1})=(f-1,n-2f+1),&&\text{and} &&\SHAPE(\mathfrak{t}|_{n-1})=(f,n-2f-1)
\end{align*}
holds if and only if $v_\mathfrak{s}=v_\mathfrak{t}e_{n-1}$.
\end{lemma}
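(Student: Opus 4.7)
The plan is to unpack the recursive definitions of $v_\mathfrak{s}$ and $v_\mathfrak{t}$ and reduce the statement to a direct manipulation of the words $w_{i,j}$. First I would pin down the shape of the common truncation $\mathfrak{u}:=\mathfrak{s}|_{n-2}=\mathfrak{t}|_{n-2}$. Since $\mathfrak{s}\ne\mathfrak{t}$ while $\SHAPE(\mathfrak{s})=\SHAPE(\mathfrak{t})=\lambda$, the two paths must differ precisely at level $n-1$. The only two predecessors of $\lambda=(f,n-2f)$ at that level are $(f,n-2f-1)$ and $(f-1,n-2f+1)$, and an elementary check shows these share the single common predecessor $(f-1,n-2f)$ at level $n-2$. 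Hence $\SHAPE(\mathfrak{u})=(f-1,n-2f)$, and exactly two labelings of $(\SHAPE(\mathfrak{s}|_{n-1}),\SHAPE(\mathfrak{t}|_{n-1}))$ are possible: the one stated in the lemma, which I call case~(A), and its reverse, case~(B).

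Next I would dispatch the ``if'' direction. In case~(A) the recursions unfold to give
\[
v_\mathfrak{s}=w_{2f,n}v_\mathfrak{u},\qquad v_\mathfrak{t}=w_{2f,n-1}v_\mathfrak{u}.
\]
A straightforward induction on path length shows $v_\mathfrak{u}\in\mathcal{A}_{n-2}$, so it can be expressed as a word in $e_1,\dots,e_{n-3}$, each of which commutes with $e_{n-1}$ by~\eqref{defrel:3}. Combining this with the trivial factorisation $w_{2f,n}=w_{2f,n-1}e_{n-1}$ (valid since $2f\le n-1$) yields
\[
v_\mathfrak{t}e_{n-1}=w_{2f,n-1}v_\mathfrak{u}e_{n-1}=w_{2f,n-1}e_{n-1}v_\mathfrak{u}=w_{2f,n}v_\mathfrak{u}=v_\mathfrak{s},
\]
as required.

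For the converse I would argue by contradiction. Assume $v_\mathfrak{s}=v_\mathfrak{t}e_{n-1}$ but we are in case~(B). Interchanging the roles of $\mathfrak{s}$ and $\mathfrak{t}$ in the previous paragraph gives $v_\mathfrak{s}=w_{2f,n-1}v_\mathfrak{u}$ and $v_\mathfrak{t}=w_{2f,n}v_\mathfrak{u}$, and then~\eqref{defrel:1} forces
\[
v_\mathfrak{t}e_{n-1}=w_{2f,n-1}e_{n-1}^2v_\mathfrak{u}=xw_{2f,n}v_\mathfrak{u}=xv_\mathfrak{t},
\]
reducing the hypothesis to $w_{2f,n-1}v_\mathfrak{u}(1-xe_{n-1})=0$. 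The main obstacle is ruling out this identity; everything else is bookkeeping. The cleanest route is to invoke the standard diagrammatic basis of $\mathcal{A}_n(x)$ given by Temperley--Lieb diagrams: the element $w_{2f,n-1}v_\mathfrak{u}$ is represented by a single diagram whose $n$th strand runs vertically, while $xw_{2f,n-1}v_\mathfrak{u}\,e_{n-1}$ is $x$ times a distinct diagram containing a cup--cap pair at positions $n-1,n$. Since distinct Temperley--Lieb diagrams are $R$-linearly independent, the putative identity fails. This contradiction rules out case~(B) and forces case~(A), completing the proof.
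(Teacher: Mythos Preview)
Your argument is correct and follows essentially the same route as the paper: pin down $\SHAPE(\mathfrak{u})=(f-1,n-2f)$, unwind the recursion in the two possible cases to get $\{v_\mathfrak{s},v_\mathfrak{t}\}=\{w_{2f,n}v_\mathfrak{u},\,w_{2f,n-1}v_\mathfrak{u}\}$, and use $w_{2f,n}=w_{2f,n-1}e_{n-1}$ together with the commutation of $e_{n-1}$ with $v_\mathfrak{u}\in\mathcal{A}_{n-2}(x)$.

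The only difference is in the treatment of the converse. The paper simply writes ``the result follows'' after displaying both cases, leaving implicit why case~(B) cannot also satisfy $v_\mathfrak{s}=v_\mathfrak{t}e_{n-1}$. You make this explicit by appealing to the Temperley--Lieb diagram basis: $w_{2f,n-1}v_\mathfrak{u}$ is (a nonzero scalar times) a diagram with a through-strand at position $n$, while $w_{2f,n}v_\mathfrak{u}$ is not, so $w_{2f,n-1}v_\mathfrak{u}\ne x\,w_{2f,n}v_\mathfrak{u}$. This is a legitimate external input (the diagram basis is established in \cite{grahamlehrer}, which the paper cites), and it cleanly fills the gap the paper glosses over. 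Your proof is at least as complete as the paper's.
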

\begin{proof}
Suppose that $\mathfrak{s}\ne\mathfrak{t}$ and $\mathfrak{s}|_{n-2}=\mathfrak{t}|_{n-2}$ and consider the Bratteli diagram fragment
\begin{align*}
\xymatrix{
				&(f,n-2f-1)\ar[dr]		&	\\			
(f-1,n-2f)\ar[ur]\ar[dr]	& 				& (f,n-2f)\\
				&(f-1,n-2f+1)\ar[ur]\ar[dr]	& \\
				&				& (f-1,n-2f+2).
}
\end{align*}
If $\mathfrak{u}=\mathfrak{t}|_{n-2}$, then either,
\begin{align*}
\SHAPE(\mathfrak{s}|_{n-1})=(f,n-2f-1)&&\text{and}&& \SHAPE(\mathfrak{t}|_{n-1})=(f-1,n-2f+1),
\end{align*}
in which case $v_{\mathfrak{s}}=w_{2f,n-1}v_\mathfrak{u}$ and $v_\mathfrak{t}=w_{2f,n}v_\mathfrak{u}$, or 
\begin{align*}
\SHAPE(\mathfrak{s}|_{n-1})=(f-1,n-2f+1)&&\text{and}&& \SHAPE(\mathfrak{t}|_{n-1})=(f,n-2f-1),
\end{align*}
in which case $v_{\mathfrak{s}}=w_{2f,n}v_\mathfrak{u}$ and $v_\mathfrak{t}=w_{2f,n-1}v_\mathfrak{u}$. Since $w_{2f,n}=w_{2f,n-1}e_{n-1}$, and $e_{n-1}$ commutes with $v_\mathfrak{u}$, the result follows.  
\end{proof}
\begin{corollary}\label{restr}
Let $\lambda$ be a partition of $n$ and suppose that $k$ is an integer, $1< k<n$. If $\mathfrak{s},\mathfrak{t}\in\mathfrak{T}_n(\lambda)$ satisfy $\mathfrak{s}\ne\mathfrak{t}$ and $\SHAPE(\mathfrak{s}|_i)=\SHAPE(\mathfrak{t}|_i)$, for $i\in\{0,1,\dots,n\}\setminus\{k-1\}$, then the condition
\begin{align*}
&\SHAPE(\mathfrak{s}|_{k-1})=(j-1,k-2j+1),&&\text{and} &&\SHAPE(\mathfrak{t}|_{k-1})=(j,k-2j-1)
\end{align*}
holds if and only if $v_\mathfrak{s}=v_\mathfrak{t}e_{k-1}$.
\end{corollary}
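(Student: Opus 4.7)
The plan is to reduce the corollary to the preceding lemma by exploiting the recursive definition of the elements $v_{\mathfrak{t}}$. The key observation is that the hypotheses of the corollary are precisely those of the lemma, but applied at an ``internal'' level $k$ rather than at the top level $n$: the tableaux $\mathfrak{s}$ and $\mathfrak{t}$ agree up to level $k-2$, disagree at level $k-1$, and reconverge at level $k$, after which they agree again all the way up to level $n$.

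First I would set $\tilde{\mathfrak{s}}=\mathfrak{s}|_k$ and $\tilde{\mathfrak{t}}=\mathfrak{t}|_k$, and view these as elements of $\mathfrak{T}_k(\SHAPE(\mathfrak{s}|_k))$. By hypothesis $\tilde{\mathfrak{s}}|_{k-2}=\tilde{\mathfrak{t}}|_{k-2}$ and $\tilde{\mathfrak{s}}\neq\tilde{\mathfrak{t}}$, so the preceding lemma (with $n$ replaced by $k$ and $f$ by $j$) applies, giving the equivalence
\begin{align*}
\SHAPE(\mathfrak{s}|_{k-1})=(j-1,k-2j+1)\text{ and }\SHAPE(\mathfrak{t}|_{k-1})=(j,k-2j-1)\iff v_{\tilde{\mathfrak{s}}}=v_{\tilde{\mathfrak{t}}}e_{k-1}.
\end{align*}

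Next I would use the recursive definition of $v_\mathfrak{t}$ to unwind the steps from level $k$ up to level $n$. Because $\SHAPE(\mathfrak{s}|_i)=\SHAPE(\mathfrak{t}|_i)$ for every $i\in\{k,k+1,\dots,n\}$, at each step $i>k$ the recursion prepends the same factor (either a $w_{2f_i,i}$ or nothing) on the left of $v_{\mathfrak{s}|_{i-1}}$ and of $v_{\mathfrak{t}|_{i-1}}$. Iterating, there is a single element $W\in\mathcal{A}_n(x)$, independent of whether we start from $\mathfrak{s}$ or $\mathfrak{t}$, such that $v_\mathfrak{s}=Wv_{\tilde{\mathfrak{s}}}$ and $v_\mathfrak{t}=Wv_{\tilde{\mathfrak{t}}}$. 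Multiplying the identity $v_{\tilde{\mathfrak{s}}}=v_{\tilde{\mathfrak{t}}}e_{k-1}$ on the left by $W$ then yields $v_\mathfrak{s}=v_\mathfrak{t}e_{k-1}$, giving the forward implication.

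For the converse I would argue by the dichotomy available at level $k-1$: since $\mathfrak{s}\neq\mathfrak{t}$ and they agree elsewhere, the two shapes at level $k-1$ must be the unordered pair $\{(j-1,k-2j+1),(j,k-2j-1)\}$, and one of the two assignments must hold. If the ``wrong'' assignment held, the forward direction just established (applied with $\mathfrak{s}$ and $\mathfrak{t}$ swapped) would give $v_\mathfrak{t}=v_\mathfrak{s}e_{k-1}$; combining with the assumed $v_\mathfrak{s}=v_\mathfrak{t}e_{k-1}$ and using $e_{k-1}^2=xe_{k-1}$ over the polynomial ring $R=\mathbb{Z}[x]$ forces a divisibility by $x$ that is incompatible with the explicit monomial form of $v_\mathfrak{s}$, ruling out this case. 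The main obstacle is precisely this converse step: rather than cancelling $W$ from the left (which is not generally possible), one must use the rigidity afforded by the two-case structure at level $k-1$ to pin down which assignment is in force. Once that is resolved, the corollary follows at once.
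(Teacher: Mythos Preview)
Your approach is correct and is the natural argument the paper evidently has in mind, since the corollary is stated without proof immediately after the lemma. The forward direction via the common left factor $W$ is exactly right. For the converse, your dichotomy argument is valid; the contradiction from $v_\mathfrak{s}=xv_\mathfrak{t}$ can be justified by the well-known fact that any monomial in the $e_i$ equals a scalar power of $x$ times a single diagram basis element, so an equation of that form is impossible in $\mathbb{Z}[x]$.
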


In~\cite{grahamlehrer}, J.~Graham and G.~Lehrer have demonstrated that $\mathcal{A}_n(x)$ is cellular, while M.~H\"arterich has provided certain Murphy type bases for generalised Temperley--Lieb algebras in~\cite{harterich}. In order to obtain a triangular action for the commuting family of elements defined in \S\ref{jmelements}, we establish that $\mathcal{A}_n(x)$ has a cellular basis as described in the next lemma (\emph{cf.} Example~\ref{ex:mbasis} below).   
\begin{lemma}\label{saru-templ}
The algebra $\mathcal{A}_{n}(x)$ is freely generated as an $R$--module by
the collection
\begin{align}\label{r-gen}
\{m_{\mathfrak{uv}}=v_\mathfrak{u}^*m_\lambda v_\mathfrak{v}:\text{for $\mathfrak{u},\mathfrak{v}\in\mathfrak{T}_{n}(\lambda)$ and $\lambda$ a partition of $n$}\}.
\end{align}
Moreover, the following statements hold.
\begin{enumerate}
\item The $R$--linear map defined by $*:m_{\mathfrak{uv}}\mapsto m_{\mathfrak{vu}}$, for $\mathfrak{u},\mathfrak{v}\in\mathfrak{T}_n(\lambda)$ and $\lambda$ a partition of $n$, is the algebra anti--involution of $\mathcal{A}_n(x)$ satisfying $e_i\mapsto e_i$ for $i=1,\dots,n-1$.
\item  Suppose that $b\in \mathcal{A}_n(x)$ . If $\lambda$ is a partition of $n$, and $\mathfrak{u}\in\mathfrak{T}_{n}(\lambda)$, then there exist $a_{\mathfrak{v}}\in R$, for $\mathfrak{v}\in\mathfrak{T}_{n}(\lambda)$, such that\label{act:0}
\begin{align}\label{btsu:1}
m_{\mathfrak{su}} b \equiv\sum_{\mathfrak{v}} a_\mathfrak{v}m_{\mathfrak{sv}}\mod\check{\mathcal{A}}^\lambda_{n},
\end{align}
for all $\mathfrak{s}\in\mathfrak{T}_{n}(\lambda)$.
\end{enumerate}
\end{lemma}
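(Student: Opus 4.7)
The plan is to proceed by induction on $n$, with the base case $n=1$ being immediate since $\mathcal{A}_1(x)=R$ and $\mathfrak{T}_1((0,1))$ has a single element. Assuming the statements for $\mathcal{A}_{n-1}(x)$, I would first show that~\eqref{r-gen} spans $\mathcal{A}_n(x)$ over $R$. Since $\mathcal{A}_n(x)$ is generated over $\mathcal{A}_{n-1}(x)$ by $e_{n-1}$, it is enough to observe that every element of $\mathcal{A}_{n-1}(x)$, viewed inside $\mathcal{A}_n(x)$, lies in the span of the $m_{\mathfrak{uv}}$ (by extending the inductive tableaux by a final step that adds a box, which leaves $v_\mathfrak{u}$ and $v_\mathfrak{v}$ unchanged), and then to show that this span is closed under right multiplication by $e_{n-1}$. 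The latter is the substantive point: a case split on whether the final step of $\mathfrak{v}$ adds or removes a box, combined with~\eqref{defrel:1}--\eqref{defrel:3} and Corollary~\ref{restr}, expresses $m_{\mathfrak{uv}}e_{n-1}$ either as $x\cdot m_{\mathfrak{uv}}$, as $m_{\mathfrak{uv}'}$ for a suitable $\mathfrak{v}'\in\mathfrak{T}_n(\lambda)$, or as an element that factors through some $m_\mu$ with $\mu\rhd\lambda$ and hence lies in $\check{\mathcal{A}}_n^\lambda$.

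To upgrade spanning to a free basis I appeal to the classical rank identity $\dim_R\mathcal{A}_n(x)=C_n=\sum_\lambda|\mathfrak{T}_n(\lambda)|^2$, where the second equality is the standard path-counting bijection on the Bratteli diagram of the Temperley--Lieb tower; equality of ranks, together with spanning, forces linear independence. For the anti-involution in~(1), the relations~\eqref{defrel:1}--\eqref{defrel:3} are invariant under reversal of products, so there is a unique anti-involution $*$ of $\mathcal{A}_n(x)$ fixing each generator $e_i$. A routine induction on $n$ using the recursive definitions, together with $w_{i,j}^{*}=w_{j,i}$ and $m_\lambda^{*}=m_\lambda$ (the latter because $m_\lambda$ is a product of pairwise commuting generators), then yields $(v_\mathfrak{u}^{*}m_\lambda v_\mathfrak{v})^{*}=v_\mathfrak{v}^{*}m_\lambda v_\mathfrak{u}=m_{\mathfrak{vu}}$, as required.

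Statement~(2) reduces by linearity to the case $b=e_i$ for each $1\le i\le n-1$. The argument parallels the closure computation above: one examines how $e_i$ interacts with $v_\mathfrak{v}$, applies the Temperley--Lieb relations and Corollary~\ref{restr}, and checks that each resulting term is either $m_{\mathfrak{sv}'}$ for some $\mathfrak{v}'\in\mathfrak{T}_n(\lambda)$ or factors through $m_\mu$ with $\mu\rhd\lambda$. A key structural observation is that the coefficients $a_\mathfrak{v}$ depend on $e_i$ and $\mathfrak{u}$ but not on $\mathfrak{s}$, because $e_i$ is applied on the right of $m_\lambda v_\mathfrak{u}$ while $v_\mathfrak{s}^{*}$ remains undisturbed on the left.

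The main obstacle I anticipate is the case analysis for right multiplication by the generators $e_i$: the subwords $w_{2f,n}$ that appear inside $v_\mathfrak{v}$ can be collapsed by the defining relations in several distinct ways, and each collapse must be carefully identified with either a tableau-labeled basis element of shape $\lambda$ or an element of $\check{\mathcal{A}}_n^\lambda$. Corollary~\ref{restr} is the combinatorial hinge that makes this bookkeeping tractable, since it translates an $e_{k-1}$ on the right into a well-defined change of path in the Bratteli diagram.
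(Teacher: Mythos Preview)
Your treatment of part~(1) and your rank--counting shortcut for linear independence are fine; the latter is a legitimate alternative that the paper does not take (the paper instead obtains freeness via the restriction filtration in Lemmas~\ref{prel:0}--\ref{prel:a}, identifying successive quotients with cell modules of $\mathcal{A}_{n-1}(x)$).

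The inductive proof of part~(2), however, has a real gap. Take $\lambda=(f,n-2f)$ and $\mathfrak{u}\in\mathfrak{T}_n(\lambda)$ with $\SHAPE(\mathfrak{u}|_{n-1})=\mu^{(2)}=(f-1,n-2f+1)$, and let $b\in\mathcal{A}_{n-1}(x)$. Writing $\mathfrak{t}=\mathfrak{u}|_{n-1}$, one has $m_\lambda v_\mathfrak{u}b=e_{2f-1}w_{2f,n}\,m_{\mu^{(2)}}v_\mathfrak{t}b$. The inductive hypothesis for $\mathcal{A}_{n-1}(x)$ gives $m_{\mu^{(2)}}v_\mathfrak{t}b=\sum a_\mathfrak{v}m_{\mu^{(2)}}v_\mathfrak{v}+E$ with $E\in\check{\mathcal{A}}_{n-1}^{\mu^{(2)}}$. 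The component of $E$ arising from $\nu=\mu^{(1)}=(f,n-2f-1)$ lies in $\mathcal{A}_{n-1}m_\lambda\mathcal{A}_{n-1}$ (since $m_{\mu^{(1)}}=m_\lambda$), so $e_{2f-1}w_{2f,n}E$ lands in $\mathcal{A}_n^\lambda$ but \emph{not} automatically in $\check{\mathcal{A}}_n^\lambda$, and there is no reason its $\lambda$--component should be of the form $m_\lambda v_\mathfrak{w}$ rather than $v_\mathfrak{r}^*m_\lambda v_\mathfrak{w}$ with $\mathfrak{r}\ne\mathfrak{t}^\lambda$. Corollary~\ref{restr} does not resolve this: it governs right multiplication by a single $e_{k-1}$ on a basis vector, whereas here you must control $e_{2f-1}w_{2f,n}v_\mathfrak{r}^{*}m_{\mu^{(1)}}$ for an \emph{arbitrary} $\mathfrak{r}\in\mathfrak{T}_{n-1}(\mu^{(1)})$.

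The paper closes this gap by strengthening the inductive hypothesis to Lemma~\ref{stronger}: the error term in~\eqref{btsu:2} is required to have its left tableau index in the restricted set $\mathfrak{T}_n^{(f)}(\mu)$, meaning $v_\mathfrak{r}\in\langle e_{2f+1},\dots,e_{n-1}\rangle$. Under this restriction $v_\mathfrak{r}^{*}$ is a single string $w_{j,2f}$, and Lemma~\ref{techs:1} then computes $e_{2f-1}w_{2f,n}v_\mathfrak{r}^{*}m_{\mu^{(1)}}v_\mathfrak{w}$ explicitly as either $m_\lambda v_{\mathfrak{v}}$ or an element of $\check{\mathcal{A}}_n^\lambda$ whose left index again lies in the restricted set. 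This refined bookkeeping---not visible from the statement of Lemma~\ref{saru-templ} alone---is what makes the induction close; your proposal would need to carry it along as well.
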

Note that Lemma~\ref{saru-templ} implies that, if $\lambda$ is a partition of $n$, then $\check{\mathcal{A}}_n^\lambda$ is the $R$--module freely generated by the set $\{m_{\mathfrak{uv}}:\text {$\mathfrak{u},\mathfrak{v}\in \mathfrak{T}_{n}(\mu)$, for $\mu\rhd\lambda$}\}$. 

For $k$ an integer with $1\le 2k+1<n$, and $\mu=(k,n-2k)$, let 
\begin{align*}
\mathfrak{T}^{(k)}_n(\mu)=\{\mathfrak{s}\in\mathfrak{T}_{n}(\mu):v_\mathfrak{s}\in\langle e_{2k+1},\dots,e_{n-1}\rangle\}.
\end{align*} 
After observing that the map 
\begin{align*}
*:m_{\mathfrak{uv}}\mapsto m_\mathfrak{vu}, &&\text{for $\mathfrak{u},\mathfrak{v}\in\mathfrak{T}_n(\lambda)$, and $\lambda$ a partition of $n$,}
\end{align*}
coincides with the algebra anti--involution that fixes the set $\{e_i:i=1,\dots,n-1\}$ pointwise, Lemma~\ref{saru-templ} will follow from the following statement. 
\begin{lemma}\label{stronger}
The set $\{m_{\mathfrak{uv}}:\text{$\mathfrak{u},\mathfrak{v}\in\mathfrak{T}_{n}(\lambda)$ and $\lambda$ a partition of $n$}\}$ freely generates $\mathcal{A}_n(x)$ as an $R$--module. Moreover, if $b\in \mathcal{A}_n(x)$, $\lambda=(f,n-2f)$ is a partition, and $\mathfrak{u}\in\mathfrak{T}_{n}(\lambda)$, then there exist $a_{\mathfrak{v}}\in R$, for $\mathfrak{v}\in\mathfrak{T}_{n}(\lambda)$, which depend only on $\mathfrak{u}$, such that
\begin{align}\label{btsu:2}
m_\lambda v_{\mathfrak{u}} b=\sum_{\mathfrak{v}\in\mathfrak{T}_{n}(\lambda)} a_\mathfrak{v}m_\lambda v_{\mathfrak{v}}+\sum_{\substack{\mu\rhd\lambda\\\mathfrak{r},\mathfrak{t}\in\mathfrak{T}_n(\mu)}}a_{\mathfrak{rt}}m_{\mathfrak{rt}},
\end{align}
where the sum is over partitions $\mu=(k,n-2k)$, and $\mathfrak{r}\in\mathfrak{T}_n^{(f)}(\mu)$, with $k=f+1,f+2,\dots$, and $a_{\mathfrak{rt}}\in R$, for $\mathfrak{r}\in\mathfrak{T}_n^{(f)}(\mu)$ and $\mathfrak{t}\in\mathfrak{T}_n(\mu)$. 
\end{lemma}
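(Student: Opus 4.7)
The approach is induction on $n$: I prove the spanning assertion and the triangular rewrite~\eqref{btsu:2} simultaneously, and then obtain linear independence from the dimension count $\sum_{\lambda}|\mathfrak{T}_n(\lambda)|^2=C_n$, which is the $R$-rank of $\mathcal{A}_n(x)$ (known from~\cite{grahamlehrer} or the diagram realisation). A spanning set of this cardinality in a free $R$-module of the same rank is automatically an $R$-basis. The cases $n\le 1$ are immediate. For the inductive step, spanning of $\mathcal{A}_n(x)$ drops out of~\eqref{btsu:2}: specialising to $\lambda=(0,n)$ and the unique $\mathfrak{u}$ with $v_\mathfrak{u}=m_\lambda=1$ places every $b\in\mathcal{A}_n(x)$ in the $R$-span of $\{m_\lambda v_\mathfrak{v}\}\cup\check{\mathcal{A}}_n^{(1,n-2)}$, and a descending induction on the filtration by the two-sided ideals $\check{\mathcal{A}}_n^\lambda$ then yields spanning at every level.

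To verify~\eqref{btsu:2} by induction on $n$, it suffices to take $b=e_i$ for $1\le i\le n-1$. I split on the shape of $\mathfrak{u}|_{n-1}$: either $v_\mathfrak{u}=v_{\mathfrak{u}|_{n-1}}\in\mathcal{A}_{n-1}(x)$ (when $\SHAPE(\mathfrak{u}|_{n-1})=(f,n-2f-1)$), or $v_\mathfrak{u}=w_{2f,n}v_{\mathfrak{u}|_{n-1}}$ with $v_{\mathfrak{u}|_{n-1}}\in\mathcal{A}_{n-1}(x)$ (when $\SHAPE(\mathfrak{u}|_{n-1})=(f-1,n-2f+1)$). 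For $i\le n-2$, I use relations~\eqref{defrel:2}--\eqref{defrel:3} to commute $e_i$ leftward through the tail of $w_{2f,n}=e_{2f}e_{2f+1}\cdots e_{n-1}$, reducing the product to an expression in $\mathcal{A}_{n-1}(x)$ on which the inductive hypothesis applies; Corollary~\ref{restr} then lifts the level-$(n-1)$ output back to level $n$. The restriction $\mathfrak{r}\in\mathfrak{T}_n^{(f)}(\mu)$ in the higher-order part of~\eqref{btsu:2} is preserved along the induction because elements of $\langle e_{2f+1},\ldots,e_{n-1}\rangle$ commute with $m_\lambda$ (the indices differing by at least two), so that the leading factor $m_\lambda$ can always be pulled out on the left of any higher-order contribution.

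The substantive obstacle is the subcase $b=e_{n-1}$ with $\SHAPE(\mathfrak{u}|_{n-1})=(f-1,n-2f+1)$: here $v_\mathfrak{u}e_{n-1}=e_{2f}e_{2f+1}\cdots e_{n-1}\,v_{\mathfrak{u}|_{n-1}}\,e_{n-1}$, and since $v_{\mathfrak{u}|_{n-1}}$ may contain $e_{n-2}$ the relations $e_{n-1}e_{n-2}e_{n-1}=e_{n-1}$ and $e_{n-1}^2=xe_{n-1}$ must be applied iteratively together with the far-apart commutations $e_ie_j=e_je_i$ for $|i-j|\ge 2$. The outcomes fall into three types, each of which must be shown to fit the right-hand side of~\eqref{btsu:2}: a single element $v_\mathfrak{v}$ for some $\mathfrak{v}\in\mathfrak{T}_n(\lambda)$ (identifiable via Corollary~\ref{restr}); a scalar multiple $xv_{\mathfrak{u}'}$; or a term producing an extra $e_{2f+1}$ adjacent to $m_\lambda$ and thereby building $m_\mu$ for $\mu=(f+1,n-2f-2)\rhd\lambda$, with a residual right-hand factor lying in $\langle e_{2f+3},\ldots,e_{n-1}\rangle\subseteq\langle e_{2f+1},\ldots,e_{n-1}\rangle$ and thus of the form demanded by $\mathfrak{r}\in\mathfrak{T}_n^{(f)}(\mu)$. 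Systematically tracking these three outcomes is the heart of the argument; with~\eqref{btsu:2} established, the spanning/counting observation of the first paragraph completes the proof.
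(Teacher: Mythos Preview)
Your overall architecture matches the paper's: induction on $n$, a split on $\SHAPE(\mathfrak{u}|_{n-1})$, separate treatment of $b\in\mathcal{A}_{n-1}(x)$ versus $b$ involving $e_{n-1}$, and spanning plus a dimension count for freeness. The paper packages the same idea as a filtration $0\subseteq N^{\mu^{(1)}}\subseteq N^{\mu^{(2)}}=C^\lambda$ of $C^\lambda$ by $\mathcal{A}_{n-1}(x)$-submodules (Lemmas~\ref{prel:0}--\ref{prel:a}), but the case division is the same as yours.

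There is, however, a genuine gap in your treatment of the case $b\in\mathcal{A}_{n-1}(x)$ with $\SHAPE(\mathfrak{u}|_{n-1})=(f-1,n-2f+1)$. You cannot simply ``commute $e_i$ leftward through the tail of $w_{2f,n}$'' and invoke Corollary~\ref{restr}: that corollary identifies a single neighbouring path, it does not lift an entire level-$(n-1)$ expansion. What is actually needed is the factorisation $m_\lambda w_{2f,n}=e_{2f-1}w_{2f,n}m_\mu$ with $\mu=(f-1,n-2f+1)$, so that the inductive hypothesis applies to $m_\mu v_{\mathfrak{u}|_{n-1}}b$ in $\mathcal{A}_{n-1}(x)$. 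That hypothesis returns, besides the leading terms, higher-order contributions $v_\mathfrak{s}^*m_\nu v_\mathfrak{w}$ with $\nu\rhd\mu$ and $\mathfrak{s}\in\mathfrak{T}_{n-1}^{(f-1)}(\nu)$, i.e.\ $v_\mathfrak{s}\in\langle e_{2f-1},\dots,e_{n-2}\rangle$. Each such term must then be premultiplied by $e_{2f-1}w_{2f,n}$ and shown to land either back in the $m_\lambda$-layer or in some $v_{\mathfrak{r}}^*m_{\nu'}v_{\mathfrak{t}}$ with $\nu'\rhd\lambda$ and $\mathfrak{r}\in\mathfrak{T}_n^{(f)}(\nu')$, so that now $v_\mathfrak{r}\in\langle e_{2f+1},\dots,e_{n-1}\rangle$. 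Your sentence ``the restriction $\mathfrak{r}\in\mathfrak{T}_n^{(f)}(\mu)$ is preserved along the induction because elements of $\langle e_{2f+1},\ldots,e_{n-1}\rangle$ commute with $m_\lambda$'' does not establish this index shift from $(f-1)$ to $(f)$; it requires an explicit word calculation. The paper isolates this as Lemma~\ref{techs:1} (for $\nu=(f,n-2f-1)$) and then a further page of case analysis inside Lemma~\ref{prel:a} (for $\nu$ strictly larger), computing products of the form $e_{2f-1}w_{2f,n}w_{j_0,2f}w_{j_1,2f+2}\cdots m_\nu$ and showing they reassemble correctly. This is the technical heart of the proof, and your plan skips it.

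Your sketch of the $b=e_{n-1}$ case is closer to complete; the three outcomes you list do occur in the paper's analysis at the end of Lemma~\ref{prel:a}, though there is also a fourth, recursive, case (when $v_{\mathfrak{u}|_{n-1}}=w_{2f-2,n-1}v_\mathfrak{v}$) that feeds back into a product already handled.
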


\begin{lemma}\label{techs:1}
Let $\lambda=(f,n-2f)$, where $n>n-2f>0$. Write $\tau=(f+1,n-2f-2)$ and $\nu=(f,n-2f-1)$. If $\mathfrak{t}\in\mathfrak{T}_{n-1}^{(f-1)}(\nu)$ and $\mathfrak{u}\in\mathfrak{T}_{n-1}(\nu)$, then either 
\begin{align*}
e_{2f-1}w_{2f,n}v_\mathfrak{t}^*m_\nu v_\mathfrak{u}=m_\lambda v_\mathfrak{v}, &&\text{where $\mathfrak{v}\in\mathfrak{T}_n(\lambda)$ and $\mathfrak{v}|_{n-1}=\mathfrak{u}$,}
\end{align*}
or there exists $\mathfrak{s}\in\mathfrak{T}_n^{(f)}(\tau)$, such that
\begin{align*}
e_{2f-1}w_{2f,n}v_\mathfrak{t}^*m_\nu v_\mathfrak{u} =v_\mathfrak{s}^*m_\tau v_\mathfrak{v},&&\text{where $\mathfrak{v}\in\mathfrak{T}_{n}(\tau)$ and $\mathfrak{v}|_{n-1}=\mathfrak{u}$.}
\end{align*}
\end{lemma}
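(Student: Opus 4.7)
The plan is a direct computation based on the structure of $\mathfrak{t}$ and a pair of Temperley--Lieb reduction identities. First, $e_{2f-1} w_{2f, n} = w_{2f-1, n}$, absorbing the prefactor into the ascending product. Second, the constraint $v_\mathfrak{t} \in \langle e_{2f-1}, \ldots, e_{n-2}\rangle$ defining $\mathfrak{T}_{n-1}^{(f-1)}(\nu)$ forces the path underlying $\mathfrak{t}$ to close its first $f-1$ arcs trivially at the earliest possible steps $2, 4, \ldots, 2f-2$, so that $v_\mathfrak{t} = w_{2f, k}$ and $v_\mathfrak{t}^* = w_{k, 2f}$ for some unique $k \in \{2f, 2f+1, \ldots, n-1\}$, the level at which the last arc of $\mathfrak{t}$ closes.

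The technical heart is the reduction identity
\[
w_{2f-1, n}\, w_{k, 2f} \;=\; e_{2f-1} e_{2f}\, w_{k+1, n}, \qquad 2f \le k \le n-1,
\]
obtained by iteratively applying $e_a e_{a+1} e_a = e_a$ to collapse the U-shape formed by the ascending and descending runs of generators; the leftover tail $e_{k+1} \cdots e_{n-1}$ commutes past and reconstitutes $w_{k+1, n}$. Combined with $e_{2f-1} e_{2f} m_\nu = m_\nu$ (since $e_{2f-1} e_{2f} e_{2f-1} = e_{2f-1}$ and the odd-indexed factors $e_1, e_3, \ldots, e_{2f-3}$ commute with $e_{2f-1}, e_{2f}$), this yields
\[
e_{2f-1} w_{2f, n} v_\mathfrak{t}^* m_\nu v_\mathfrak{u} \;=\; m_\nu\, w_{k+1, n} v_\mathfrak{u}.
\]

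If $k = n - 1$, then $w_{n, n} = 1$ and the right-hand side equals $m_\lambda v_\mathfrak{u}$; taking $\mathfrak{v}$ to extend $\mathfrak{u}$ by a box step to $\lambda$ (so that $v_\mathfrak{v} = v_\mathfrak{u}$) gives the first alternative. If $k \le n - 2$, let $\mathfrak{v}$ extend $\mathfrak{u}$ by the arc-closing step to $\tau$, so that $v_\mathfrak{v} = w_{2f+2, n} v_\mathfrak{u}$, and let $\mathfrak{s} \in \mathfrak{T}_n^{(f)}(\tau)$ be the tableau with $v_\mathfrak{s} = w_{2f+2, k+2}$ (valid since $k+2 \in \{2f+2, \ldots, n\}$). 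Using $m_\tau = m_\nu e_{2f+1}$, commutativity of $w_{k+2, 2f+2}$ with $m_\nu$, and a second U-reduction giving $w_{k+2, 2f+2}\, w_{2f+1, n} = w_{k+1, n}$, one finds $v_\mathfrak{s}^* m_\tau v_\mathfrak{v} = m_\nu\, w_{k+1, n} v_\mathfrak{u}$, establishing the second alternative.

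The principal obstacle is a clean execution of the two U-reductions, but these are systematic applications of the braid-like relation $e_a e_{a+1} e_a = e_a$ as the apex of the U-shape descends one level at a time; no subtle reasoning is required beyond careful bookkeeping. Dually, the dichotomy between the two alternatives is transparent: the first occurs precisely when step $n-1$ of $\mathfrak{t}$ is an arc closing, the second when it is a box addition.
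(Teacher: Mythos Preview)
Your proof is correct and follows essentially the same route as the paper's: both identify $v_\mathfrak{t}^* = w_{j,2f}$ for some $2f \le j \le n-1$, reduce $e_{2f-1}w_{2f,n}w_{j,2f}$ via the relation $e_a e_{a+1} e_a = e_a$ to $e_{2f-1}e_{2f}w_{j+1,n}$, and then split on whether $j=n-1$ (first alternative) or $j<n-1$ (second alternative, where the paper writes the result directly as $w_{j+2,2f+2}\,m_\tau\,w_{2f+2,n}v_\mathfrak{u}$ rather than verifying it by your backward computation). Your explicit naming of the two ``U-reductions'' and the path-level justification that $v_\mathfrak{t}=w_{2f,k}$ are slightly more detailed than the paper's terse assertion, but the argument is the same.
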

\begin{proof}
We may write $v_\mathfrak{t}^*=w_{j,2f}$, where $2f\le j\le n-1$, so that
\begin{align*}
e_{2f-1}w_{2f,n}v^*_\mathfrak{t}m_\nu&= e_{2f-1}w_{2f,n} w_{j,2f} m_\nu \\
&=
\begin{cases}
e_{2f-1}e_{2f}m_\nu,&\text{if $j=n-1$;}\\
e_{2f-1}e_{2f}e_{j+1}e_{j+2}\cdots e_{n-1}m_\nu,&\text{if $2f\le j<n-1$.}
\end{cases}
\end{align*}
In the first case in the above expression, we obtain 
\begin{align*}
e_{2f-1}w_{2f,n}v^*_\mathfrak{s}m_\nu v_\mathfrak{u}= e_{2f-1}e_{2f} e_1\cdots e_{2f-1}v_\mathfrak{u}=e_1\cdots e_{2f-1}v_\mathfrak{u}
=m_\lambda v_\mathfrak{v},
\end{align*}
whereas in the second,
\begin{align*}
e_{2f-1}w_{2f,n}v^*_\mathfrak{s}m_\nu v_\mathfrak{u}&=e_{2f-1}e_{2f}e_{j+1}e_{j+2}\cdots e_{n-1}(e_1\cdots e_{2f-1})v_\mathfrak{u}\\
&=e_1\cdots e_{2f-1} e_{j+1}e_{j+2}\cdots e_{n-1} v_\mathfrak{u}\\
&=e_{j+1}e_{j}\cdots e_{2f+2}m_\tau e_{2f+2}e_{2f+3}\cdots e_{n-1}v_\mathfrak{u}=v_\mathfrak{s}^*m_\tau v_\mathfrak{v},
\end{align*}
as required.
\end{proof}

If $\lambda$ is a partition of $n$, let $m_{\mathfrak{t}^\lambda}=m_\lambda+\check{\mathcal{A}}_{n}^\lambda\in \mathcal{A}_{n}^\lambda/\check{\mathcal{A}}_{n}^\lambda$, and define $C^\lambda$ to be the right $\mathcal{A}_{n}(x)$--submodule of $\mathcal{A}_{n}/\check{\mathcal{A}}_{n}^\lambda$ generated by $m_{\mathfrak{t}^\lambda}$.  
Further, if $\lambda=(f,n-2f)$ and $\mu\to\lambda$, define 
\begin{align*}
y^\lambda_\mu=
\begin{cases}
m_{\mathfrak{t}^\lambda},&\text{if $\mu=(f-1,n-2f+1)$;}\\
m_{\mathfrak{t}^\lambda}w_{2f,n},&\text{if $\mu=(f,n-2f-1)$},
\end{cases}
\end{align*}
and, let $N^\mu$ denote the $\mathcal{A}_{n-1}(x)$--submodule of $C^\lambda$ generated by $y^\lambda_\mu$.

In the next two lemmas, we assume that Lemma~\ref{stronger} is valid when applied to the algebra $\mathcal{A}_{n-1}(x)$ and show that the lemma is also true when applied to the algebra $\mathcal{A}_n(x)$ in the case that $\lambda$ is maximal among partitions of $n$. 
\begin{lemma}\label{prel:0}
Let $n=2f$ and $\lambda=(f,0)$. If $\mu=(f-1,1)$, then $\{y_\mu^\lambda v_\mathfrak{s}:\mathfrak{s}\in\mathfrak{T}_{n-1}(\mu)\}$ generates $C^\lambda$ as an $R$--module, and the $R$--module map $C^\lambda\to C^\mu$ determined by 
\begin{align*}
y^\lambda_\mu v_{\mathfrak{s}}\mapsto m_\mathfrak{t},&&\text{for $\mathfrak{t}\in\mathfrak{T}_{n}(\lambda)$ and $\mathfrak{s}=\mathfrak{t}|_{n-1}\in\mathfrak{T}_{n-1}(\mu)$,} 
\end{align*}
is an isomorphism of $\mathcal{A}_{n-1}(x)$--modules.
\end{lemma}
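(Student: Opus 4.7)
My plan is to realize the desired map as the inverse of left multiplication by $e_{n-1}$, and to deduce the isomorphism by a rank argument over the fraction field after establishing surjectivity. Since $\lambda=(f,0)$ and $\mu=(f-1,1)$ are the unique maximal partitions of $n=2f$ and $n-1=2f-1$ respectively, $\check{\mathcal{A}}_n^\lambda=0$ and $\check{\mathcal{A}}_{n-1}^\mu=0$, so $C^\lambda=m_\lambda\mathcal{A}_n(x)$, $C^\mu=m_\mu\mathcal{A}_{n-1}(x)$, and the inductive hypothesis (Lemma~\ref{stronger} for $\mathcal{A}_{n-1}(x)$) gives a free $R$-basis $\{m_\mu v_\mathfrak{s}:\mathfrak{s}\in\mathfrak{T}_{n-1}(\mu)\}$ of $C^\mu$. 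The key identity is $m_\lambda=m_\mu e_{n-1}=e_{n-1}m_\mu$: the first equality follows by inspection from the definitions, and the second holds because $e_{n-1}=e_{2f-1}$ commutes with each of the factors $e_1,e_3,\ldots,e_{2f-3}$ of $m_\mu$ (all at distance at least $2$). The restriction $\mathfrak{t}\mapsto\mathfrak{s}:=\mathfrak{t}|_{n-1}$ is a forced bijection $\mathfrak{T}_n(\lambda)\to\mathfrak{T}_{n-1}(\mu)$ (the penultimate shape along any path to $\lambda$ must equal $\mu$), and since $n=2f$ gives $w_{2f,n}=w_{n,n}=1$, the recursive definition yields $v_\mathfrak{t}=v_\mathfrak{s}$, so that $y^\lambda_\mu v_\mathfrak{s}=m_\lambda v_\mathfrak{s}$.

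I introduce $\Phi\colon C^\mu\to C^\lambda$ by $\Phi(c)=e_{n-1}c$; by the key identity $\Phi(m_\mu v_\mathfrak{s})=m_\lambda v_\mathfrak{s}$, and $\Phi$ is $\mathcal{A}_{n-1}(x)$-linear because left multiplication by $e_{n-1}$ commutes with the right $\mathcal{A}_{n-1}(x)$-action. For surjectivity, it suffices to show $m_\lambda\mathcal{A}_n(x)=m_\lambda\mathcal{A}_{n-1}(x)$. Writing any $b\in\mathcal{A}_n(x)$ in the form $b=u+\sum_i a_ie_{n-1}b_i$ with $u,a_i,b_i\in\mathcal{A}_{n-1}(x)$ (possible since $e_{n-1}^2=xe_{n-1}$ reduces words to at most one factor $e_{n-1}$), and using the sandwich identity $e_{n-1}ae_{n-1}=\varepsilon(a)e_{n-1}$ for $a\in\mathcal{A}_{n-1}(x)$ with $\varepsilon(a)\in\mathcal{A}_{n-2}(x)$, we compute
\begin{align*}
m_\lambda a_ie_{n-1}b_i=m_\mu e_{n-1}a_ie_{n-1}b_i=m_\mu\varepsilon(a_i)e_{n-1}b_i=m_\lambda\varepsilon(a_i)b_i\in m_\lambda\mathcal{A}_{n-1}(x),
\end{align*}
so $m_\lambda b\in m_\lambda\mathcal{A}_{n-1}(x)$. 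Combined with the inductive basis of $C^\mu$, this simultaneously establishes that $\{m_\lambda v_\mathfrak{s}\}$ generates $C^\lambda$ as an $R$-module and that $\Phi$ is surjective.

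The main obstacle is the injectivity of $\Phi$, equivalently the $R$-linear independence of $\{m_\lambda v_\mathfrak{s}:\mathfrak{s}\in\mathfrak{T}_{n-1}(\mu)\}$ in $\mathcal{A}_n(x)$; a direct cancellation is impossible since $e_{n-1}$ is a zero divisor (for instance $e_{n-1}(x-e_{n-1})=0$). My strategy is to tensor with the fraction field $K=\mathbb{Q}(x)$: then $\mathcal{A}_n(x)\otimes_R K$ is semisimple, and its irreducible modules (the base-changed cell modules) indexed by partitions $\nu$ of $n$ have $K$-dimension $|\mathfrak{T}_n(\nu)|$, a well-known structural feature of the Temperley--Lieb algebra. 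Since $|\mathfrak{T}_n(\lambda)|=|\mathfrak{T}_{n-1}(\mu)|$ via the bijection above, the $K$-linear surjection $\Phi\otimes_R K$ between vector spaces of equal finite dimension is automatically bijective, so $\ker\Phi\otimes_R K=0$ and $\ker\Phi$ is $R$-torsion. But $\ker\Phi\subseteq C^\mu$, which is $R$-free by induction, is necessarily torsion-free, whence $\ker\Phi=0$. Therefore $\Phi$ is an $\mathcal{A}_{n-1}(x)$-module isomorphism, and its inverse $\Phi^{-1}\colon C^\lambda\to C^\mu$, given by $m_\lambda v_\mathfrak{s}\mapsto m_\mu v_\mathfrak{s}$, is the required isomorphism.
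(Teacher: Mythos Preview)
Your argument is correct, and its core device coincides with the paper's: both exploit the identity $m_\lambda=e_{2f-1}m_\mu$ to realise the comparison map as (the inverse of) left multiplication by $e_{n-1}$, which is automatically $\mathcal{A}_{n-1}(x)$-equivariant. The differences lie in how the two non-trivial points are handled.

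For surjectivity (equivalently $m_\lambda\mathcal{A}_n(x)=m_\lambda\mathcal{A}_{n-1}(x)$), the paper carries out an explicit case analysis on $m_\lambda v_\mathfrak{t}e_{n-1}$ according to whether $\SHAPE(\mathfrak{t}|_{n-2})$ is $(f-1,0)$ or $(f-2,2)$, reducing each product back into the span by hand. Your use of the conditional expectation $e_{n-1}\mathcal{A}_{n-1}(x)e_{n-1}\subseteq\mathcal{A}_{n-2}(x)e_{n-1}$ subsumes this case analysis in one stroke; it is cleaner, but it imports a structural fact about the tower that the paper does not develop and would have to be justified separately (via Jones normal form, say).

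For injectivity, the paper's proof of this lemma does not address it at all: freeness of the spanning set is left to the global rank count implicit in the overall induction for Lemma~\ref{stronger}. You instead make the argument local by passing to $K=\mathbb{Q}(x)$ and invoking the known dimension $|\mathfrak{T}_n(\lambda)|$ of the generic simple module, so that the surjection $\Phi\otimes_RK$ is between spaces of equal dimension. This is logically complete at the cost of appealing to the pre-existing representation theory of $\mathcal{A}_n(x)\otimes_RK$ (e.g.\ via the Graham--Lehrer diagram basis), which is somewhat outside the self-contained inductive framework the paper is trying to erect. Within the paper's programme one would prefer to defer freeness to the final Catalan-number count; as a stand-alone proof, your route is perfectly sound.
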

\begin{proof}
Let $b\in\mathcal{A}_{n-1}(x)$. Since $\check{\mathcal{A}}_{n-1}^\mu=0$, by Lemma~\ref{stronger}, which we apply inductively, there exist $a_\mathfrak{v}\in R$, for $\mathfrak{v}\in\mathfrak{T}_{n-1}(\mu)$, depending only on $b$, such that 
\begin{align*}
m_\mu b=\sum_{\mathfrak{v}\in\mathfrak{T}_{n-1}(\mu)} a_\mathfrak{v}m_\mu v_\mathfrak{v}.
\end{align*}
Since $m_\lambda= e_{2f-1}m_\mu$, we multiply both sides of the above expression by $e_{2f-1}$ on the left to obtain
\begin{align*}
m_\lambda b=e_{2f-1}m_\mu b=\sum_{\mathfrak{v}\in\mathfrak{T}_{n-1}(\mu)} a_\mathfrak{v}e_{2f-1}m_\mu v_\mathfrak{v}=\sum_{\mathfrak{u}\in\mathfrak{T}_{n}(\lambda )} a_\mathfrak{u}m_\lambda v_\mathfrak{u},
\end{align*}
where $a_\mathfrak{v}=a_\mathfrak{u}$ whenever $\mathfrak{u}|_{n-1}=\mathfrak{v}$. 

Now we show that the collection $\{y_\mu^\lambda v_\mathfrak{s}:\mathfrak{s}\in\mathfrak{T}_{n-1}(\mu)\}$ generates $C^\lambda$ as an $R$--module. If $\mathfrak{t}\in\mathfrak{T}_{n}(\lambda)$, and $\mathfrak{s}=\mathfrak{t}|_{n-1}$, then either 
\begin{enumerate}
\item[(i)] $\SHAPE(\mathfrak{s}|_{n-2})=(f-1,0)$, in which event $v_\mathfrak{s}\in\mathcal{A}_{n-2}(x)$, or
\item[(ii)] $\SHAPE(\mathfrak{s}|_{n-2})=(f-2,2)$, in which event $v_\mathfrak{t}=e_{n-2}v_\mathfrak{u}$, where $v_\mathfrak{u}\in\mathcal{A}_{n-2}(x)$.
\end{enumerate}
In the case (i), $m_\lambda v_\mathfrak{s}e_{n-1}=xm_\lambda v_\mathfrak{s}$, while, in the case (ii), $m_\lambda v_\mathfrak{t}e_{n-1}=m_\lambda e_{n-2}v_\mathfrak{u}e_{n-1}=m_\lambda v_\mathfrak{u}$ which, since $v_\mathfrak{u}\in\mathcal{A}_{n-1}(x)$, can be written as 
\begin{align*}
m_\lambda v_\mathfrak{u}=\sum_{\mathfrak{v}\in\mathfrak{T}_{n-1}(\mu)}a_\mathfrak{v}m_\lambda v_\mathfrak{v},
\end{align*}
where $a_\mathfrak{v}\in R$, for $\mathfrak{v}\in\mathfrak{T}_{n-1}(\mu)$. Thus, if $b\in\mathcal{A}_{n-1}(x)e_{n-1}\mathcal{A}_{n-1}(x)$, then $m_\lambda b$ can be expressed as an $R$--linear combination of terms from $\{y_\mu^\lambda v_\mathfrak{s}:\mathfrak{s}\in\mathfrak{T}_{n-1}(\mu)\}$. This completes the proof of the lemma.
\end{proof}

\begin{lemma}\label{prel:1}
Let $n=2f+1$ and $\lambda=(f,1)$. If $\mu^{(1)}=(f,0)$ and $\mu^{(2)}=(f-1,2)$, then 
\begin{align}\label{filter:0}
(0)=N^{\mu^{(0)}}\subseteq N^{\mu^{(1)}}\subseteq N^{\mu^{(2)}} = C^\lambda
\end{align}
is a filtration of the $\mathcal{A}_n(x)$--module $C^\lambda$ by $\mathcal{A}_{n-1}(x)$--modules. Moreover, if $\mu\in\{\mu^{(1)},\mu^{(2)}\}$, then $\{ y^\lambda_{\mu}v_\mathfrak{s}:\mathfrak{s}\in\mathfrak{T}_{n-1}(\mu)\}$ freely generates $N^\mu$ as an $R$--module, and the $R$--module homomorphism $N^{\mu^{(i)}}/N^{\mu^{(i-1)}}\mapsto C^\lambda $ determined by 
\begin{align}\label{prel:1:map}
y^\lambda_\mu v_{\mathfrak{s}}+N^{\mu^{(i-1)}}\mapsto m_\mathfrak{t},&&\text{for $\mathfrak{t}\in\mathfrak{T}_{n}(\lambda)$ and $\mathfrak{s}=\mathfrak{t}|_{n-1}\in\mathfrak{T}_{n-1}(\mu)$,} 
\end{align}
is an isomorphism of $\mathcal{A}_{n-1}(x)$--modules.
\end{lemma}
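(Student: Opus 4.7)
The plan is to prove Lemma~\ref{prel:1} by induction on $n$, taking Lemma~\ref{stronger} applied to $\mathcal{A}_{n-1}(x)$ as the inductive hypothesis and invoking Lemma~\ref{techs:1} as the crucial technical ingredient. Since $\lambda=(f,1)$ is the unique maximal partition of $n=2f+1$ under $\unrhd$, one has $\check{\mathcal{A}}_n^\lambda=0$, so by Lemma~\ref{saru-templ} the collection $\{m_\lambda v_\mathfrak{t}:\mathfrak{t}\in\mathfrak{T}_n(\lambda)\}$ is already an honest free $R$-basis of $C^\lambda$. The recursive definition of $v_\mathfrak{t}$ sorts this basis by $\SHAPE(\mathfrak{t}|_{n-1})\in\{\mu^{(1)},\mu^{(2)}\}$, with the two blocks reading $\{y^\lambda_{\mu^{(i)}}v_\mathfrak{s}:\mathfrak{s}\in\mathfrak{T}_{n-1}(\mu^{(i)})\}$ respectively. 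This observation supplies the claimed free $R$-generation as soon as the filtration is in place.

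To verify the filtration I would first show that $N^{\mu^{(2)}}=C^\lambda$: every $m_{\mathfrak{t}^\lambda}\cdot b$ with $b\in\mathcal{A}_n(x)$ reduces, modulo $N^{\mu^{(1)}}$, to an $\mathcal{A}_{n-1}(x)$-linear combination of terms $y^\lambda_{\mu^{(2)}}v$. Decompose $b$ via $\mathcal{A}_n(x)=\mathcal{A}_{n-1}(x)+\mathcal{A}_{n-1}(x)e_{n-1}\mathcal{A}_{n-1}(x)$. Since $m_\lambda=m_{(f,0)}$ as algebra elements, the inductive Lemma~\ref{stronger} rewrites $m_\lambda c$ for $c\in\mathcal{A}_{n-1}(x)$ in the required form. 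For the contribution from $e_{n-1}=e_{2f}$ invoke Lemma~\ref{techs:1} with $\nu=\mu^{(1)}$: it outputs either $m_\lambda v_\mathfrak{v}$ directly (the $\tau=(f+1,-1)$ branch being vacuous in our range) or an element of $\check{\mathcal{A}}_n^\lambda=0$, closing the reduction. The inclusion $N^{\mu^{(1)}}\subseteq N^{\mu^{(2)}}$ then follows from the identity $e_{2f-1}e_{2f}e_{2f-1}=e_{2f-1}$, which exhibits $y^\lambda_{\mu^{(1)}}\cdot e_{2f-1}$ as a scalar multiple of $y^\lambda_{\mu^{(2)}}$.

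For the isomorphism $N^{\mu^{(i)}}/N^{\mu^{(i-1)}}\xrightarrow{\sim}C^{\mu^{(i)}}$ I would define the candidate inverse on cellular generators by $m_{\mu^{(i)}}v_\mathfrak{s}+\check{\mathcal{A}}_{n-1}^{\mu^{(i)}}\mapsto y^\lambda_{\mu^{(i)}}v_\mathfrak{s}+N^{\mu^{(i-1)}}$ and verify well-definedness. For $i=1$ this is automatic, since $\mu^{(1)}=(f,0)$ is maximal at level $n-1$, forcing $\check{\mathcal{A}}_{n-1}^{\mu^{(1)}}=0$. For $i=2$ one must check that any relation $\sum a_\mathfrak{s}m_{\mu^{(2)}}v_\mathfrak{s}\in\check{\mathcal{A}}_{n-1}^{\mu^{(2)}}=\mathcal{A}_{n-1}^{\mu^{(1)}}$ entails $\sum a_\mathfrak{s}y^\lambda_{\mu^{(2)}}v_\mathfrak{s}\in N^{\mu^{(1)}}$; one expresses the left-hand side via the inductive cellular basis as a combination of $v_\mathfrak{u}^*m_{\mu^{(1)}}v_\mathfrak{v}$, and then the passage from $m_{\mu^{(2)}}$ to $m_\lambda=m_{\mu^{(2)}}e_{2f-1}$ together with Lemma~\ref{techs:1} converts each such summand into an element of $N^{\mu^{(1)}}$. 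Surjectivity is immediate from the first paragraph, and injectivity follows from the rank identity $|\mathfrak{T}_n(\lambda)|=\sum_i|\mathfrak{T}_{n-1}(\mu^{(i)})|$ combined with the freeness supplied by Lemma~\ref{saru-templ}.

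The main obstacle is precisely this well-definedness check for $i=2$: one must transfer a linear dependence among cellular basis vectors of $\mathcal{A}_{n-1}(x)$ across the multiplicative shift by $e_{2f-1}$ that links $m_{\mu^{(2)}}$ to $m_\lambda$, ensuring that the image in $C^\lambda$ lands in the $\mathcal{A}_{n-1}(x)$-submodule $N^{\mu^{(1)}}$ rather than in some higher piece. This is exactly what Lemma~\ref{techs:1} is engineered to deliver, and the proof essentially reduces to careful bookkeeping of its repeated application, combined with the commutation rules of the $e_i$ with the $w_{2f,n}$ factor.
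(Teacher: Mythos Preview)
Your broad plan—use the inductive hypothesis (Lemma~\ref{stronger} for $\mathcal{A}_{n-1}(x)$), treat $b\in\mathcal{A}_{n-1}(x)$ and $b\in\mathcal{A}_{n-1}(x)e_{n-1}\mathcal{A}_{n-1}(x)$ separately, and control the error terms coming from higher cells—is indeed the paper's strategy. But there is a genuine circularity in the way you open the argument. You invoke Lemma~\ref{saru-templ} at level $n$ to assert that $\{m_\lambda v_\mathfrak{t}:\mathfrak{t}\in\mathfrak{T}_n(\lambda)\}$ is already a free $R$-basis of $C^\lambda$, and again at the end for injectivity via a rank count. However, Lemma~\ref{prel:1} is one of the inductive steps used to establish Lemma~\ref{stronger} for $\mathcal{A}_n(x)$, and Lemma~\ref{saru-templ} is deduced \emph{from} Lemma~\ref{stronger}. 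At this point in the induction only the $(n-1)$-level versions of those statements are available. The paper therefore does not assume freeness at level $n$: it proves directly that the proposed elements span $N^{\mu^{(i)}}$ (respectively $C^\lambda$) and that the $\mathcal{A}_{n-1}(x)$-action is compatible with the map~\eqref{prel:1:map}; freeness is obtained only once the whole induction closes.

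Two smaller points. First, your argument for the inclusion is back to front: exhibiting $y^\lambda_{\mu^{(1)}}e_{2f-1}$ as a multiple of $y^\lambda_{\mu^{(2)}}$ (with $e_{2f-1}\in\mathcal{A}_{n-1}(x)$) would place $y^\lambda_{\mu^{(2)}}$ inside $N^{\mu^{(1)}}$, giving the reverse containment. The paper instead checks $y^\lambda_{\mu^{(2)}}e_{2f-1}=y^\lambda_{\mu^{(1)}}$. Second, Lemma~\ref{techs:1} is not the right tool here: its input is an expression of the shape $e_{2f-1}w_{2f,n}v_\mathfrak{t}^*m_\nu v_\mathfrak{u}$, not $m_\lambda v_\mathfrak{t} e_{n-1}$. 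In the paper's proof of Lemma~\ref{prel:1} the higher-cell error terms from the inductive application of Lemma~\ref{stronger} collapse because the only $\mathfrak{r}\in\mathfrak{T}_{n-1}^{(f-1)}(\mu^{(1)})$ has $v_\mathfrak{r}=1$, whence $e_{2f-1}e_{2f}v_\mathfrak{r}^*m_{\mu^{(1)}}=m_\lambda$ directly; and the action of $e_{n-1}$ on the basis elements $m_\lambda v_\mathfrak{t}$ is handled by a short explicit case analysis on $\SHAPE(\mathfrak{t}|_{n-2})$, not via Lemma~\ref{techs:1}.
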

\begin{proof}
We have $y^\lambda_{\mu^{(1)}}=m_{\mathfrak{t}^\lambda}$ and $y^\lambda_{\mu^{(2)}}=m_{\mathfrak{t}^\lambda} e_{2f}$, so $y^\lambda_{\mu^{(2)}}e_{2f-1}=y^\lambda_{\mu^{(1)}}$, which shows that $N^{\mu^{(1)}}\subseteq N^{\mu^{(2)}}$ is an inclusion of $\mathcal{A}_{n-1}(x)$--modules. Furthermore, if $b\in\mathcal{A}_{n-1}(x)$ and $\mathfrak{u}\in\mathfrak{T}_{n-1}(\mu^{(1)})$, then, by Lemma~\ref{prel:0}, there exist $a_\mathfrak{v}\in R$, for $\mathfrak{v}\in\mathfrak{T}_{n-1}(\lambda)$, such that 
\begin{align*}
m_{\mu^{(1)}}v_\mathfrak{u} b=\sum_{\mathfrak{v}\in\mathfrak{T}_n(\mu^{(1)}) }a_\mathfrak{v}m_{\mu^{(1)}} v_\mathfrak{v};
\end{align*}
thus,
\begin{align*}
y^\lambda_{\mu^{(1)}}v_\mathfrak{u}b=\sum_{\mathfrak{v}\in\mathfrak{T}_{n-1}(\mu^{(1)})}a_\mathfrak{v}y^\lambda_{\mu^{(1)}} v_\mathfrak{v}.
\end{align*}

If $\mathfrak{u}\in\mathfrak{T}_{n-1}(\mu^{(2)})$, then $y_{\mu^{(2)}}^\lambda v_\mathfrak{u}b=e_{2f-1}e_{2f}m_{\mu^{(2)}}v_\mathfrak{u}b+\check{\mathcal{A}}_{n-1}^\lambda$, and, by Lemma~\ref{stronger} which we apply inductively,
\begin{align*}
e_{2f-1}e_{2f}m_{\mu^{(2)}}v_\mathfrak{u}b &=\sum_{\mathfrak{v}\in\mathfrak{T}_{n-1}(\mu^{(2)})}a_\mathfrak{v}e_{2f-1}e_{2f}m_{\mu^{(2)}}v_\mathfrak{v}+\sum_{\mathfrak{r},\mathfrak{t}\in\mathfrak{T}_{n-1}(\mu^{(1)})}a_{\mathfrak{rt}}e_{2f-1}e_{2f} v^*_\mathfrak{r}m_{\mu^{(1)}} v_\mathfrak{t},
\end{align*}
which shows that 
\begin{align*}
y_{\mu^{(2)}}^\lambda v_\mathfrak{u}b&=\sum_{{\mathfrak{v}\in\mathfrak{T}_{n-1}(\mu^{(2)})}}a_\mathfrak{v}y^{\lambda}_{\mu^{(2)}}v_\mathfrak{v}+\sum_{\mathfrak{r},\mathfrak{t}\in\mathfrak{T}_{n-1}(\mu^{(1)})}a_{\mathfrak{rt}}e_{2f-1}e_{2f} v^*_\mathfrak{r}m_{\mu^{(1)}} v_\mathfrak{t},
\end{align*}
where the sum is over $\mathfrak{r}\in\mathfrak{T}_{n-1}^{(f-1)}(\mu^{(1)})$. Since in fact $v_\mathfrak{r}=1$ whenever $\mathfrak{r}\in\mathfrak{T}_{n-1}^{(f-1)}(\mu^{(1)})$, and $e_{2f-1}e_{2f}m_{\mu^{(1)}}=m_\lambda=y^\lambda_{\mu^{(1)}}$, from the above expression, we obtain  
\begin{align*}
y_{\mu^{(2)}}^\lambda v_\mathfrak{u}b&=\sum_{{\mathfrak{v}\in\mathfrak{T}_{n-1}(\mu^{(2)})}}a_\mathfrak{v}y^{\lambda}_{\mu^{(2)}}v_\mathfrak{v}+\sum_{\mathfrak{r},\mathfrak{t}\in\mathfrak{T}_{n-1}(\mu^{(1)})}a_{\mathfrak{t}}m_{\mu^{(1)}} v_\mathfrak{t},
\end{align*}
which shows that the $R$--module map map $N^{\mu^{(2)}}/N^{\mu^{(1)}}\to C^{\mu^{(1)}}$ given by~\eqref{prel:1:map} is a homomorphism of $\mathcal{A}_{n-1}(x)$--modules. It remains to show that if $b\in\mathcal{A}_{n-1}(x)e_{n-1}\mathcal{A}_{n-1}(x)$, and $\mathfrak{t}\in\mathfrak{T}_{n}(\lambda)$, then $m_\lambda v_\mathfrak{t}b$ can be expressed as an $R$--linear combination of elements from $\{m_\lambda v_\mathfrak{s}:\mathfrak{s}\in\mathfrak{T}_n(\lambda)\}$.

Let $\mathfrak{t}\in\mathfrak{T}_{n}(\lambda)$, $\mathfrak{t}|_{n-1}\in\mathfrak{T}_{n-1}(\mu^{(1)})$ and $\mathfrak{u}=\mathfrak{t}|_{n-2}$. Then $\SHAPE(\mathfrak{u})=(f-1,1)$ and $v_\mathfrak{t}=v_\mathfrak{u}\in\mathcal{A}_{n-2}(x)$. Hence $v_\mathfrak{t}e_{2f}=e_{2f}v_\mathfrak{u}=v_\mathfrak{v}$, where $\mathfrak{v}\in\mathfrak{T}_n(\lambda)$ satisfies $\mathfrak{v}|_{n-1}\in\mathfrak{T}_{n-1}(\mu^{(2)})$; It follows that $m_\lambda v_\mathfrak{t}e_{n-1}=m_\lambda e_{2f}v_\mathfrak{t} =m_\lambda v_\mathfrak{v}$, which shows that in this instance, if $b\in\mathcal{A}_{n-1}(x)e_{n-1}\mathcal{A}_{n-1}(x)$, then $m_\lambda v_\mathfrak{t} b$ can be expressed as an $R$--linear combination of the required form.

If $\mathfrak{t}\in\mathfrak{T}_n(\lambda)$, $\mathfrak{t}|_{n-1}\in\mathfrak{T}_{n-1}(\mu^{(2)})$, and $\mathfrak{u}=\mathfrak{t}|_{n-2}$, then either 
\begin{enumerate}
\item[(i)] $\SHAPE(\mathfrak{u})=(f-1,1)$, in which event $v_\mathfrak{u}\in\mathcal{A}_{n-2}(x)$ and $v_\mathfrak{t}=e_{2f}v_\mathfrak{u}$, or
\item[(ii)] $\SHAPE(\mathfrak{u})=(f-2,3)$, in which event $v_\mathfrak{t}=e_{2f}e_{2f-2}e_{2f-1}v_\mathfrak{u}$, where $v_\mathfrak{u}\in\mathcal{A}_{n-2}(x)$.
\end{enumerate}
In the case (i) above, 
\begin{align*}
m_\lambda v_\mathfrak{t} e_{n-1}=m_\lambda e_{2f}v_\mathfrak{u}e_{n-1}=m_\lambda e_{2f}v_\mathfrak{u}e_{2f}=xm_\lambda e_{2f}v_\mathfrak{u}=xm_\lambda v_\mathfrak{t},
\end{align*}
while, in case (ii),
\begin{align*}
m_\lambda v_\mathfrak{t}e_{n-1}=m_\lambda e_{2f}e_{2f-2}e_{2f-1}v_\mathfrak{u}e_{n-1}=m_\lambda e_{2f}e_{2f-2}e_{2f-1}v_\mathfrak{u}e_{2f}=m_\lambda e_{2f}e_{2f-2}v_\mathfrak{u}.
\end{align*}
Since $e_{2f-2}v_\mathfrak{u}\in\mathcal{A}_{n-1}(x)$, the rightmost term in the above equalities can be expressed as a linear combination of elements from $\{m_\lambda v_\mathfrak{s}:\mathfrak{s}\in\mathfrak{T}_n(\lambda)\}$. This completes the proof of the lemma.
\end{proof}
The following corollary provides the base case in the induction used in the proof of Lemma~\ref{prel:a}.
\begin{corollary}\label{intermediate}
Let $f$ be a non--negative integer, and $n=2f+\delta$, where $\delta\in\{0,1\}$ and $\lambda=(f,\delta)$. If Lemma~\ref{stronger} holds for $\mathcal{A}_{n-1}(x)$, then the set $\{m_{\mathfrak{uv}}=v_\mathfrak{u}m_\lambda v_\mathfrak{v}:\mathfrak{u},\mathfrak{v}\in\mathfrak{T}_k(\lambda)\}$ freely generates $\mathcal{A}_n^\lambda$ as an $R$--module. Furthermore, if $u\in\mathfrak{T}_n(\lambda)$ and $b\in\mathcal{A}_n(x)$, then there exist 
$a_\mathfrak{v}\in R$, for $\mathfrak{v}\in\mathfrak{T}_{n-1}(\lambda)$, depending only on $\mathfrak{u}$ and $b$, such that 
\begin{align*}
m_\lambda v_\mathfrak{u}b=\sum_{\mathfrak{v}\in\mathfrak{T}_n(\lambda)}a_\mathfrak{v}m_\lambda v_\mathfrak{v}.
\end{align*}
\end{corollary}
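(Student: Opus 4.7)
The plan is to reduce both assertions to Lemma~\ref{prel:0} or Lemma~\ref{prel:1}, according as $\delta=0$ or $\delta=1$, exploiting the fact that $\lambda=(f,\delta)$ is maximal in the dominance order on partitions of $n$, so $\check{\mathcal{A}}_n^\lambda=0$; consequently $\mathcal{A}_n^\lambda=\mathcal{A}_n(x)m_\lambda\mathcal{A}_n(x)$ and $C^\lambda=m_\lambda\mathcal{A}_n(x)$ as a right $\mathcal{A}_n(x)$-module.

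The second assertion (the right-action formula) is essentially a restatement of Lemma~\ref{prel:0} or Lemma~\ref{prel:1}: both lemmas verify that right multiplication by any element of $\mathcal{A}_{n-1}(x)$ or of $\mathcal{A}_{n-1}(x)e_{n-1}\mathcal{A}_{n-1}(x)$ sends $m_\lambda$ into the $R$-span of $\{m_\lambda v_\mathfrak{v}:\mathfrak{v}\in\mathfrak{T}_n(\lambda)\}$. Since $\mathcal{A}_n(x)=\mathcal{A}_{n-1}(x)+\mathcal{A}_{n-1}(x)e_{n-1}\mathcal{A}_{n-1}(x)$, the formula holds for every $b\in\mathcal{A}_n(x)$, and the case of general $\mathfrak{u}$ is subsumed because $v_\mathfrak{u}b$ is itself an algebra element.

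Spanning of $\mathcal{A}_n^\lambda$ by $\{v_\mathfrak{u}^*m_\lambda v_\mathfrak{v}\}$ then follows from the anti-involution. Since $m_\lambda=e_1e_3\cdots e_{2f-1}$ is a product of pairwise commuting $*$-fixed generators, $m_\lambda^*=m_\lambda$; applying $*$ to the right-action formula gives spanning of $\mathcal{A}_n(x)m_\lambda$ by $\{v_\mathfrak{u}^*m_\lambda\}$, and composing the two one-sided results yields that any $am_\lambda b\in\mathcal{A}_n^\lambda$ is an $R$-linear combination of the $\{v_\mathfrak{u}^*m_\lambda v_\mathfrak{v}\}$.

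Freeness is the main obstacle. The first input is that $\{m_\lambda v_\mathfrak{v}:\mathfrak{v}\in\mathfrak{T}_n(\lambda)\}$ is already $R$-free in $C^\lambda$: Lemma~\ref{prel:1} asserts this directly through the freeness of each filtration quotient $N^\mu/N^{\mu'}$, while for $\delta=0$ it follows from the $\mathcal{A}_{n-1}(x)$-module isomorphism $C^\lambda\cong C^\mu$ of Lemma~\ref{prel:0} combined with the inductive hypothesis that Lemma~\ref{stronger} holds for $\mathcal{A}_{n-1}(x)$, which supplies the free basis of $C^\mu$. The plan is then to form the $R$-linear map
\begin{equation*}
\phi:C^\lambda\otimes_R C^\lambda\longrightarrow\mathcal{A}_n^\lambda,\qquad (m_\lambda v_\mathfrak{u})\otimes(m_\lambda v_\mathfrak{v})\mapsto v_\mathfrak{u}^*m_\lambda v_\mathfrak{v},
\end{equation*}
well-defined by the $R$-freeness of $C^\lambda$ and surjective by the spanning result just proved. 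Injectivity is the delicate step: the elementary diagrammatic identity $m_\lambda\mathcal{A}_n(x)m_\lambda\subseteq Rm_\lambda$ (immediate from the spanning of $m_\lambda\mathcal{A}_n(x)$ together with the relations $e_ie_{i\pm1}e_i=e_i$ and $e_i^2=xe_i$) converts any hypothetical relation $\sum a_{\mathfrak{u}\mathfrak{v}}v_\mathfrak{u}^*m_\lambda v_\mathfrak{v}=0$, under left multiplication by $m_\lambda v_{\mathfrak{u}_0}$ for each $\mathfrak{u}_0\in\mathfrak{T}_n(\lambda)$, into a matrix equation in the free module $C^\lambda$ whose coefficient matrix (the Gram matrix of the cellular bilinear form) is nonsingular over the fraction field of $R$; torsion-freeness of $C^\lambda\otimes_R C^\lambda$ over $R=\mathbb{Z}[x]$ then concludes injectivity over $R$ itself, completing the corollary.
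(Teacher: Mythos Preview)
Your treatment of the right--action formula and of spanning is exactly what the paper intends: Corollary~\ref{intermediate} is stated without proof, as an immediate consequence of Lemmas~\ref{prel:0} and~\ref{prel:1}, and your reduction to those lemmas via $\mathcal{A}_n(x)=\mathcal{A}_{n-1}(x)+\mathcal{A}_{n-1}(x)e_{n-1}\mathcal{A}_{n-1}(x)$, followed by the anti--involution for the left side, is correct.

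The issue is your freeness argument. You invoke nonsingularity of the Gram matrix of $C^\lambda$ over the fraction field $F$, but within the paper this is established only in \S4--\S5, \emph{after} the cellular basis of Lemma~\ref{saru-templ} is in place; the seminormal form and the determinant formulae all presuppose the very basis whose linear independence you are trying to prove. So as written the argument is circular in the paper's internal logic. (One can rescue it by appealing to an \emph{external} fact, namely the well--known generic semisimplicity of $\mathscr{A}_n(x)$ over $\mathbb{Q}(x)$, which forces every cell--module Gram matrix to be nondegenerate; but then you are importing a result the paper never states.)

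The paper itself does not give an independent proof of freeness here either; it is tacitly using that $\mathcal{A}_n(x)$ is already known to be a free $R$--module of rank the Catalan number $C_n$ (from the diagram basis, or from~\cite{grahamlehrer}). Once the inductive spanning argument produces a spanning set of that same cardinality, the standard fact that $n$ elements spanning $R^n$ over a commutative ring form a basis gives linear independence of the entire collection $\{m_{\mathfrak{uv}}\}$, and hence of the subset indexed by the maximal $\lambda$. This is both simpler than the Gram--matrix route and avoids the circularity.
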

In the next lemma, we take $\lambda=(f,n-2f)$ to be a partition with $n-2f>1$, and, using Corollary~\ref{intermediate}, assume that Lemma~\ref{stronger} holds for for $\mathcal{A}_{n-1}(x)$ and for $\mathcal{A}_n(x)$ in the case of partitions $\nu\rhd\lambda$. 
\begin{lemma}\label{prel:a}
Let $\lambda=(f,n-2f)$, where $n>n-2f>1$. If $\mu^{(1)}=(f,n-2f-1)$ and $\mu^{(2)}=(f-1,n-2f+1)$. Then 
\begin{align}\label{filter:1}
(0)=N^{\mu^{(0)}}\subseteq N^{\mu^{(1)}}\subseteq N^{\mu^{(2)}} = C^\lambda
\end{align}
is a filtration of the $\mathcal{A}_n(x)$--module $C^\lambda$ by $\mathcal{A}_{n-1}(x)$--modules. Moreover, if $\mu\in\{\mu^{(1)},\mu^{(2)}\}$, then $\{ y^\lambda_{\mu}v_\mathfrak{s}:\mathfrak{s}\in\mathfrak{T}_{n-1}(\mu)\}$ freely generates $N^\mu$ as an $R$--module, and the $R$--module homomorphism $N^{\mu^{(i)}}/N^{\mu^{(i-1)}}\mapsto C^\lambda $ determined by 
\begin{align}\label{prel:iso}
y^\lambda_\mu v_{\mathfrak{s}}+N^{\mu^{(i-1)}}\mapsto m_\mathfrak{t},&&\text{for $\mathfrak{t}\in\mathfrak{T}_{n}(\lambda)$ and $\mathfrak{s}=\mathfrak{t}|_{n-1}\in\mathfrak{T}_{n-1}(\mu)$,} 
\end{align}
is an isomorphism of $\mathcal{A}_{n-1}(x)$--modules. 
\end{lemma}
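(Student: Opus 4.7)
My plan is to emulate the argument of Lemma~\ref{prel:1}, substituting the string $w_{2f,n}=e_{2f}e_{2f+1}\cdots e_{n-1}$ for $e_{2f}$ wherever necessary and invoking the inductive hypothesis that Lemma~\ref{stronger} holds for $\mathcal{A}_{n-1}(x)$ (hence Corollary~\ref{intermediate}) as well as for $\mathcal{A}_n(x)$ on every $\nu\rhd\lambda$; Lemma~\ref{techs:1} will supply the key reductions. First I prove the inclusion $N^{\mu^{(1)}}\subseteq N^{\mu^{(2)}}$ via a Temperley--Lieb computation: iterated use of $e_ie_{i+1}e_i=e_i$ collapses $w_{2f,n}\cdot(e_{n-2}e_{n-3}\cdots e_{2f})$ to $e_{2f}$, and then $e_{2f-1}e_{2f}e_{2f-1}=e_{2f-1}$ gives $y^\lambda_{\mu^{(2)}}\cdot(e_{n-2}\cdots e_{2f})\cdot e_{2f-1}=y^\lambda_{\mu^{(1)}}$, with the multiplier lying in $\mathcal{A}_{n-1}(x)$.

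For the structure of $N^{\mu^{(1)}}$ I exploit that $m_\lambda$ and $m_{\mu^{(1)}}$ coincide as algebra elements, so $y^\lambda_{\mu^{(1)}}v_\mathfrak{u}b=m_\lambda v_\mathfrak{u}b+\check{\mathcal{A}}_n^\lambda$. The inductive Lemma~\ref{stronger} for $\mathcal{A}_{n-1}(x)$ expands $m_{\mu^{(1)}}v_\mathfrak{u}b$ as $\sum_\mathfrak{v}a_\mathfrak{v}m_{\mu^{(1)}}v_\mathfrak{v}+E$ with $E\in\check{\mathcal{A}}_{n-1}^{\mu^{(1)}}$; since every $m_\sigma$ with $\sigma\rhd\mu^{(1)}$ equals $m_\tau$ for the partition $\tau$ of $n$ with the same tuple (and such $\tau\rhd\lambda$), the error $E$ lies in $\check{\mathcal{A}}_n^\lambda$ and hence vanishes in $C^\lambda$. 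This yields $y^\lambda_{\mu^{(1)}}v_\mathfrak{u}b=\sum_\mathfrak{v}a_\mathfrak{v}y^\lambda_{\mu^{(1)}}v_\mathfrak{v}$, simultaneously establishing the free generation of $N^{\mu^{(1)}}$ and the $\mathcal{A}_{n-1}(x)$-isomorphism with $C^{\mu^{(1)}}$. For $N^{\mu^{(2)}}$ modulo $N^{\mu^{(1)}}$, I use the commutation identity $m_\lambda w_{2f,n}=e_{2f-1}w_{2f,n}m_{\mu^{(2)}}$ (whose two sides agree because the generators in $e_{2f-1}w_{2f,n}$ commute past those of $m_{\mu^{(2)}}$) and again apply the inductive Lemma~\ref{stronger} to $m_{\mu^{(2)}}v_\mathfrak{u}b$: left-multiplying the leading terms by $e_{2f-1}w_{2f,n}$ returns $\sum_\mathfrak{v}a_\mathfrak{v}y^\lambda_{\mu^{(2)}}v_\mathfrak{v}$, while the error terms indexed by $\sigma=\mu^{(1)}$ are exactly the inputs to Lemma~\ref{techs:1}, which places each contribution either in $N^{\mu^{(1)}}$ (first alternative, yielding $m_\lambda v_\mathfrak{v}$) or in $\check{\mathcal{A}}_n^\lambda$ (second alternative, yielding $v^*_\mathfrak{s}m_\tau v_\mathfrak{v}$ with $\tau\rhd\lambda$). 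Errors indexed by larger $\sigma$ land in $\check{\mathcal{A}}_n^\lambda$ via the same identification of $m_\sigma$ with $m_{\tau'}$ for some $\tau'\rhd\lambda$.

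Since the basis of $C^\lambda$ splits disjointly as $\{y^\lambda_{\mu^{(1)}}v_\mathfrak{s}:\mathfrak{s}\in\mathfrak{T}_{n-1}(\mu^{(1)})\}\sqcup\{y^\lambda_{\mu^{(2)}}v_\mathfrak{s}:\mathfrak{s}\in\mathfrak{T}_{n-1}(\mu^{(2)})\}$, the preceding analysis gives $N^{\mu^{(2)}}+N^{\mu^{(1)}}=C^\lambda$, and combined with the inclusion of the first paragraph yields $N^{\mu^{(2)}}=C^\lambda$. A rank count against $|\mathfrak{T}_{n-1}(\mu^{(i)})|$ then promotes the surjective $R$-module map~\eqref{prel:iso} to an $\mathcal{A}_{n-1}(x)$-isomorphism in each layer. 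The principal obstacle is the bookkeeping for $N^{\mu^{(2)}}$: one must verify that every error term produced when expanding $m_{\mu^{(2)}}v_\mathfrak{u}b$ falls within the scope of Lemma~\ref{techs:1} (for $\sigma=\mu^{(1)}$) or is absorbed into $\check{\mathcal{A}}_n^\lambda$ (for strictly larger $\sigma$), so that no term escapes the desired filtration.
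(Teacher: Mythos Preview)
Your argument for the inclusion $N^{\mu^{(1)}}\subseteq N^{\mu^{(2)}}$ and for the $\mathcal{A}_{n-1}(x)$--action on each layer tracks the paper's approach closely. Your handling of the higher error terms (those indexed by $\sigma\rhd\mu^{(1)}$) is in fact simpler than the paper's: for the statement of Lemma~\ref{prel:a} alone you only need these terms to vanish in $C^\lambda$, and since $m_\sigma=m_{\tau'}$ for $\tau'\rhd\lambda$ they do land in $\check{\mathcal{A}}_n^\lambda$. (The paper carries out a finer reduction here---tracking that the left factors lie in $\mathfrak{T}_n^{(f)}(\tau')$---because the proof is doing double duty, preparing the specific form~\eqref{btsu:2} needed for the inductive step of Lemma~\ref{stronger}; see the paragraph following~\eqref{expar:3}.)

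There is, however, a genuine gap in your argument that $N^{\mu^{(2)}}=C^\lambda$. You assert that ``the basis of $C^\lambda$ splits disjointly'' as $\{y^\lambda_{\mu^{(1)}}v_\mathfrak{s}\}\sqcup\{y^\lambda_{\mu^{(2)}}v_\mathfrak{s}\}$ and then appeal to a rank count. But at this stage of the induction we do \emph{not} know that $\{m_\lambda v_\mathfrak{t}:\mathfrak{t}\in\mathfrak{T}_n(\lambda)\}$ is a basis---or even a spanning set---for $C^\lambda$: that is precisely the content of Lemma~\ref{stronger} for the partition $\lambda$, which Lemma~\ref{prel:a} is meant to supply. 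The inductive hypothesis gives Lemma~\ref{stronger} only for $\mathcal{A}_{n-1}(x)$ and for $\nu\rhd\lambda$ in $\mathcal{A}_n(x)$. What your analysis establishes is that the $R$--span of $\{m_\lambda v_\mathfrak{t}\}$ is stable under $\mathcal{A}_{n-1}(x)$ modulo $\check{\mathcal{A}}_n^\lambda$; since $C^\lambda$ is generated by $m_{\mathfrak{t}^\lambda}$ as an $\mathcal{A}_n(x)$--module, closure under $e_{n-1}$ must still be checked. This is exactly the final portion of the paper's proof: a case analysis on $\SHAPE(\mathfrak{t}|_{n-1})$ and $\SHAPE(\mathfrak{t}|_{n-2})$ verifying that each $m_\lambda v_\mathfrak{t}e_{n-1}$ returns to the desired span (as in~\eqref{expar:4} and~\eqref{expar:5}). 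Without that step, neither $N^{\mu^{(2)}}=C^\lambda$ nor the rank count upgrading~\eqref{prel:iso} to an isomorphism is justified.
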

\begin{proof}
First, if $\nu=(k,n-2k-1)\rhd\mu^{(1)}$, and $b\in\mathcal{A}_{n-1}^\nu$, then $C^\lambda b=0$, since $\mathcal{A}^\nu_{n-1}\subset\check{\mathcal{A}}_n^\lambda$
.

Next, observe that, if we write $\mu=(2f,n-2f-1)$, then, consistent with the inclusion of algebras in~\eqref{tower}, the $\mathcal{A}_{n-1}(x)$--module $N^\mu$ is isomorphic to the $\mathcal{A}_{n-1}(x)$--module $C^\mu$. Thus, by induction,  $\{ y^\lambda_{\mu}v_\mathfrak{s}:\mathfrak{s}\in\mathfrak{T}_{n-1}(\mu)\}$ freely generates $N^\mu$ as an $R$--module.

If $\mu=(f-1,n-2f+1)$, then $y^\lambda_\mu w_{n-1,2f-1}=m_{\mathfrak{t}^\lambda}$, showing that~\eqref{filter:1} is an inclusion of $\mathcal{A}_{n-1}(x)$--modules. 

Now, let $\mu=(f-1,n-2f-1)$, suppose that $\mathfrak{t}\in\mathfrak{T}_{n-1}(\mu)$, and consider the action of an element $b\in\mathcal{A}_{n-1}(x)$ in the expression
\begin{align}\label{act:2}
m_\lambda w_{f,n}v_\mathfrak{t}b&=e_{2f-1}w_{2f,n}m_\mu v_\mathfrak{t}b.
\end{align}
By Lemma~\ref{stronger}, which we apply inductively, there exist $a_{\mathfrak{v}}\in R$, for $\mathfrak{v}\in\mathfrak{T}_{n}(\lambda)$, which depend only on $\mathfrak{t}$, such that
\begin{align}\label{expr:a}
m_\mu v_{\mathfrak{t}} b =\sum_{\mathfrak{v}\in\mathfrak{T}_{n}(\mu)} a_\mathfrak{v}m_\mu v_\mathfrak{v}+\sum_{\substack{\nu\rhd\mu\\ \mathfrak{s},\mathfrak{u}\in\mathfrak{T}_{n-1}(\nu)}}a_{\mathfrak{su}}v^*_\mathfrak{s}m_\nu v_{\mathfrak{u}},
\end{align}
where the latter sum is over partitions $\nu=(k,n-2k-1)$, for $k=f,f+1,\dots$, and $\mathfrak{s}\in\mathfrak{T}_{n-1}^{(f-1)}(\nu)$. Substituted into~\eqref{act:2}, the expression~\eqref{expr:a} gives
\begin{align}
m_\lambda w_{2f,n}v_\mathfrak{t}b&=\sum_{\mathfrak{v}\in\mathfrak{T}_{n}(\mu)} a_\mathfrak{v}e_{2f-1}w_{2f,n}m_\mu v_\mathfrak{v}+\sum_{\substack{\nu\rhd\mu\\ \mathfrak{s},\mathfrak{u}\in\mathfrak{T}_n(\nu)}}a_{\mathfrak{su}}e_{2f-1}w_{2f,n}v^*_\mathfrak{s}m_\nu v_{\mathfrak{u}}\label{indstep:1}\\
&=\sum_{\mathfrak{v}\in\mathfrak{T}_{n}(\mu)} a_\mathfrak{v}m_\lambda w_{2f,n} v_\mathfrak{v}+\sum_{\substack{\nu\rhd\mu\\ \mathfrak{s},\mathfrak{u}\in\mathfrak{T}_n(\nu)}}a_{\mathfrak{su}}e_{2f-1}w_{2f,n}v^*_\mathfrak{s}m_\nu v_{\mathfrak{u}}\notag\\
&=\sum_{\substack{\mathfrak{r}\in\mathfrak{T}_{n}(\lambda),\\
\mathfrak{r}|_{n-1}\in\mathfrak{T}_{n-1}(\mu)}} a_\mathfrak{r}m_\lambda v_\mathfrak{r}+\sum_{\substack{\nu\rhd\mu\\\mathfrak{s},\mathfrak{u}\in\mathfrak{T}_n(\nu)}}a_{\mathfrak{su}}e_{2f-1}w_{2f,n}v^*_\mathfrak{s}m_\nu v_{\mathfrak{u}},\notag
\end{align}
where, in the above expression, $a_\mathfrak{r}\in R$ are defined, for $\mathfrak{r}\in\mathfrak{T}_n(\lambda)$, by the condition that $a_\mathfrak{v}=a_\mathfrak{r}$ whenever $\mathfrak{r}|_{n-1}=\mathfrak{v}\in\mathfrak{T}_{n-1}(\mu)$. 

Now, using Lemma~\ref{techs:1}, we turn our consideration to the summands $e_{2f-1}w_{2f,n}v^*_\mathfrak{s}m_\nu v_{\mathfrak{u}}$ appearing in~\eqref{indstep:1}. Let $\tau=(f+1,n-2f-2)$; if $\nu=(f,n-2f-1)$, then either 
\begin{align}\label{expar:1}
e_{2f-1}w_{2f,n}v^*_\mathfrak{s}m_\nu v_{\mathfrak{u}}=m_\lambda v_{\mathfrak{v}'},&&\text{where $\mathfrak{v}'\in\mathfrak{T}_{n-1}(\lambda)$ and $\SHAPE(\mathfrak{v}'|_{n-1})=\mu^{(1)}$,}
\end{align}
or, there exists $\mathfrak{t}'\in\mathfrak{T}_n^{(f)}(\tau)$, such that 
\begin{align}\label{expar:2}
e_{2f-1}w_{2f,n}v_\mathfrak{s}^* m_\nu v_\mathfrak{u}= v_{\mathfrak{t}'}^* m_\tau v_{\mathfrak{v}'},&&\text{where $\mathfrak{v'}\in\mathfrak{T}_n(\tau)$ and $\mathfrak{v}'|_{n-1}=\mathfrak{u}$.}
\end{align}
Now suppose that $\nu=(k,n-2k-1)$, where $k=f+1,f+2,\dots$, let $\mathfrak{s}\in\mathfrak{T}_{n-1}^{(f-1)}(\nu)$, and consider the product $e_{2f-1}w_{2f,n}v_{\mathfrak{s}}^*m_\nu$. We may write 
\begin{align*}
v_\mathfrak{s}^*=w_{j_0,2i}w_{j_1,2i+2}\cdots w_{r,2k},&&\text{where $f\le i\le k$ and $2i\le j_0<j_1<\cdots \le r\le n-1$.}
\end{align*}
If $v_\mathfrak{s}^*=1$, then 
\begin{align*}
e_{2f-1}w_{2f,n}v_{\mathfrak{s}}^*m_\nu=e_{2f-1}w_{2f,n}m_\nu=e_{2f-1}w_{2f,n}e_1e_3\cdots e_{2k-1}=w_{2k+2,n}e_1e_3\cdots e_{2k+1};
\end{align*}
otherwise, if $f<i$, so that $v_\mathfrak{s}^*\in\langle e_{2f+2},\dots,e_{n-2}\rangle$, then 
\begin{align*}
e_{2f-1}w_{2f,n}v_\mathfrak{s}^*m_\nu&=e_{2f-1}w_{2f,n}v_\mathfrak{s}^*e_1e_3\cdots e_{2k-1}\\
&=e_{2f+1}w_{2f+2,n}v_\mathfrak{s}^*e_1e_3\cdots e_{2k-1}\\
&=e_{2f+1}w_{2f+2,n}v_\mathfrak{s}^*m_\nu.
\end{align*}
Thus we suppose that $v_\mathfrak{s}^*=w_{j_0,2f}w_{j_1,2f+2}\cdots w_{r,2k}$ where $2f<j_0<j_1<\cdots< r$, in which event,
\begin{align}\label{techs:cas}
w_{2f,n}w_{j_0,2f}=
\begin{cases}
e_{2f},&\text{if $j_0=n-1$;}\\
e_{2f}e_{j_0+1}e_{j_0+2}\cdots e_{n-1},&\text{if $2f<j_0<n-1$.}
\end{cases}
\end{align}
Let $v_{\mathfrak{s}'}^*=w_{j_1,2f+2}\cdots w_{r,2k}$; in the first case in~\eqref{techs:cas}, using $e_{2f-1}e_{2f}e_{2f-1}=e_{2f-1}$, 
\begin{align*}
e_{2f}w_{2f,n}v_\mathfrak{s}^* m_\nu=e_{2f-1}w_{2f,n}w_{n-1,2f}v_{\mathfrak{s}'}^*m_\nu=e_{2f-1}e_{2f}v_{\mathfrak{s}'}^*m_\nu=v_{\mathfrak{s}'}^*m_\nu,
\end{align*}
and in the second, 
\begin{align*}
e_{2f-1}w_{2f,n}v_\mathfrak{s}^* m_\nu&=e_{2f-1}e_{2f}e_{j_0+1}e_{j_0+2}\cdots e_{n-1}v_{\mathfrak{s}'}^*m_\nu\\
&=e_{j_0+1}e_{j_0+2}\cdots e_{n-1}v_{\mathfrak{s}'}^*m_\nu.
\end{align*}
Let $\nu'=(k,n-2k)$, so that $m_\nu\mapsto m_{\nu'}$ under the inclusion $\mathcal{A}_{n-1}(x)\hookrightarrow\mathcal{A}_{n}(x)$, and let $b'=e_{2f-1}w_{2f,n}v_\mathfrak{s}$ take a value in  $\{v^*_{\mathfrak{s}'},e_{j_0+1}e_{j_0+2}\cdots e_{n-1}v_{\mathfrak{s}'}^*\}$, depending on the case in~\eqref{techs:cas}. Since $b'\in \langle e_{i}:2f<i<n\rangle$, and $\nu'\rhd\lambda$, there exist $a_{\mathfrak{s}''}\in R$, for $\mathfrak{s}''\in \mathfrak{T}_n(\nu')$, such that 
\begin{align}\label{expar:a}
e_{2f-1}w_{2f,n}v_\mathfrak{s}m_\nu=b'm_{\nu'}=\sum_{\mathfrak{s}''\in\mathfrak{T}_n^{(f)}(\nu')} v_{\mathfrak{s}''}^* m_{\nu'} + \sum_{\substack{\tau'\rhd\nu'\\ \mathfrak{r}',\mathfrak{t}'\in \mathfrak{T}_n(\tau')}} a_{\mathfrak{r}'\mathfrak{t}'} v_{\mathfrak{r}'}^*m_{\tau'} v_{\mathfrak{t}'},
\end{align}
where $\mathfrak{r}'\in\mathfrak{T}_n^{(f)}(\tau')$, $\mathfrak{t}'\in \mathfrak{T}_n^{(k)}(\tau')$, and $a_{\mathfrak{r}'\mathfrak{t}'}\in R$, for $\tau'\rhd\nu'$. If $\mathfrak{u}\in\mathfrak{T}_{n-1}(\nu)$, then $v_\mathfrak{u}=v_{\mathfrak{u}'}$, where $\mathfrak{u}'|_{n-1}=\mathfrak{u}\in\mathfrak{T}_{n-1}(\nu)$ and $\mathfrak{u}\in\mathfrak{T}_n(\nu')$, we multiply both sides of~\eqref{expar:a} by $v_\mathfrak{u}=v_{\mathfrak{u}'}$ to obtain
\begin{align}\label{expar:b}
e_{2f-1}w_{2f,n}v_\mathfrak{s}m_\nu v_\mathfrak{u}=\sum_{\mathfrak{s}''\in\mathfrak{T}_n^{(f)}(\nu')} v_{\mathfrak{s}''}^* m_{\nu'}v_{\mathfrak{u}'} + \sum_{\substack{\tau'\rhd\nu'\\ \mathfrak{r}'',\mathfrak{t}''\in \mathfrak{T}_n(\tau')}} a_{\mathfrak{r}''\mathfrak{t}''} v_{\mathfrak{r}''}^*m_{\tau'} v_{\mathfrak{t}''},
\end{align}
where $\mathfrak{r}''\in\mathfrak{T}_n^{(f)}(\tau')$, $\mathfrak{t}''\in \mathfrak{T}_n(\tau')$, and $a_{\mathfrak{r}''\mathfrak{t}''}\in R$, for $\tau'\rhd\nu'$. 

Combining~\eqref{indstep:1} with~\eqref{expar:1},~\eqref{expar:2} and~\eqref{expar:b}, we have shown that if $\mathfrak{t}\in\mathfrak{T}_n(\lambda)$, $\SHAPE(\mathfrak{t}|_{n-1})=\mu^{(2)}$ and $b\in\mathcal{A}_{n-1}(x)$, then there exist $a_\mathfrak{r},a_{\mathfrak{v}'}\in R$, for $\mathfrak{r},\mathfrak{v}'\in\mathfrak{T}_n(\lambda)$, where $\SHAPE(\mathfrak{r}|_{n-1})=\mu^{(2)}$ and $\SHAPE(\mathfrak{v}'|_{n-1})=\mu^{(1)}$, satisfying 
\begin{align}\label{expar:3}
m_\lambda v_{\mathfrak{t}}b&=\sum_{\substack{\mathfrak{r}\in\mathfrak{T}_n(\lambda)\\\SHAPE(\mathfrak{r}|_{n-1})=\mu^{(2)}}}a_\mathfrak{r}m_\lambda v_\mathfrak{r}+\sum_{\substack{\mathfrak{v}'\in\mathfrak{T}_n(\lambda)\\\SHAPE(\mathfrak{v}'|_{n-1})=\mu^{(1)}}}a_{\mathfrak{v}'}m_\lambda v_{\mathfrak{v}'}+\sum_{\substack{\gamma\rhd\lambda\\ \mathfrak{s},\mathfrak{u}\in\mathfrak{T}_n(\gamma)}}a_{\mathfrak{su}}v_\mathfrak{s}^*m_\gamma v_\mathfrak{u},
\end{align}
where the sum is over $\mathfrak{s}\in\mathfrak{T}_n^{(f)}(\gamma)$, $\mathfrak{u}\in\mathfrak{T}_n(\gamma)$, and $a_{\mathfrak{su}}\in R$, for $\gamma\rhd\lambda$. The manner in which the $a_\mathfrak{r}\in R$, for $\mathfrak{r}\in\mathfrak{T}_n(\lambda)$ satisfying $\SHAPE(\mathfrak{r}|_{n-1})=\mu^{(2)}$, are derived in~\eqref{indstep:1} from the action of $\mathcal{A}_{n-1}(x)$ on $C^{\mu^{(2)}}$ shows that the map~\eqref{prel:iso} is a homomorphism of $\mathcal{A}_{n-1}(x)$--modules.

It remains to demonstrate that $N^{\mu^{(2)}}=C^\lambda$. To this purpose, we show that if $\mathfrak{t}\in\mathfrak{T}_{n}(\lambda)$ and $b\in\mathcal{A}_{n-1}(x)e_{n-1}\mathcal{A}_{n-1}(x)$, then $m_\lambda v_\mathfrak{t}b$ can be expressed as a sum of the form~\eqref{expar:3}. Firstly, we suppose that $\mathfrak{t}\in\mathfrak{T}_{n}(\lambda)$ and let $\mathfrak{u}=\mathfrak{t}_{n-1}$ satisfy $\SHAPE(\mathfrak{u})=\mu^{(1)}$. In this case, $v_\mathfrak{t}=v_\mathfrak{u}\in\mathcal{A}_{n-1}(x)$. If $v_\mathfrak{u}\in\mathcal{A}_{n-2}(x)$, then 
\begin{align}\label{expar:4}
m_\lambda v_\mathfrak{t}e_{n-1}=m_\lambda e_{n-1} v_\mathfrak{t} =w_{n,2f+2}m_\nu w_{2f+2,n}v_\mathfrak{t}, &&\text{where $\nu=(f+1,n-2f-2)$.}
\end{align}
By what we have already shown, the term appearing on the right hand side of~\eqref{expar:4} can be written as a sum of the form~\eqref{expar:3}. Otherwise, if $v_\mathfrak{u}=w_{2f,n-1}v_\mathfrak{v}$, where $\SHAPE(\mathfrak{v})=(f-1,n-2f)$ and $v_\mathfrak{v}\in\mathcal{A}_{n-2}(x)$, then 
\begin{align}\label{expar:5}
m_\lambda v_\mathfrak{t}e_{n-1}=m_\lambda w_{2f,n-1}v_\mathfrak{v}e_{n-1}=m_\lambda w_{2f,n-1}e_{n-1}v_\mathfrak{v}=m_\lambda w_{2f,n}v_\mathfrak{v}=m_\lambda v_\mathfrak{s}
\end{align}
where $\mathfrak{s}\in\mathfrak{T}_n(\lambda)$ is defined by $\mathfrak{s}|_{n-2}=\mathfrak{v}$ and $\SHAPE(\mathfrak{s}|_{n-1})=(f-1,n-2f+1)$. Now suppose that $v_\mathfrak{t}=w_{2f,n}v_\mathfrak{u}$, where $\SHAPE(\mathfrak{u})=(f-1,n-2f+1)$. If $v_\mathfrak{u}\in\mathcal{A}_{n-2}(x)$, then $m_\lambda v_\mathfrak{t} e_{n-1}=xm_\lambda v_\mathfrak{t}$; otherwise $v_\mathfrak{u}=w_{2f-2,n-1}v_\mathfrak{v}$, where $\SHAPE(\mathfrak{v})=(f-2,n-2f+2)$ and $v_\mathfrak{v}\in\mathcal{A}_{n-2}(x)$. Thus,
\begin{align*}
m_\lambda v_\mathfrak{t}e_{n-1}=m_\lambda w_{2f,n}w_{2f-2,n-1}v_\mathfrak{v}e_{n-1}=m_\lambda w_{2f,n}w_{2f-2,n}v_\mathfrak{v}=m_\lambda w_{2f,n}w_{2f-2,n-2}v_\mathfrak{v},
\end{align*}
which is a term that we have already shown can be expressed as a sum of the form~\eqref{expar:3}. This completes the proof of the lemma.
\end{proof}
\begin{proof}[Proof of Lemma~\ref{stronger}]
Firstly, if $b\in\mathcal{A}_{n-1}(x)$, then~\eqref{btsu:2} holds by virtue of the calculations preceding~\eqref{expar:3} in the proof of Lemma~\ref{intermediate} and, if $b\in\mathcal{A}_{n-1}(x)e_{n-1}\mathcal{A}_{n-1}(x)$, then the proof of the fact that $N^{\mu^{(2)}}=C^\lambda$ in the proof of Lemma~\ref{intermediate} shows that~\eqref{btsu:2} holds.
\end{proof}
\begin{example}\label{ex:mbasis}
If $n=6$ and $\lambda=(2,2)$, then the elements $m_\lambda v_\mathfrak{s}$, for $\mathfrak{s}\in\mathfrak{T}_n(\lambda)$, are given in terms of the diagram presentation for $\mathcal{A}_n(x)$ are as follows:
\begin{figure}[h]
\begin{center}
\scalebox{0.8}{
\input{fig1.pstex_t}}
\end{center}
\end{figure}
\end{example}

\section{Jucys--Murphy Elements for Temperley--Lieb Algebras}\label{jmelements}
In~\cite{hmram:2007}, T.~Halverson, M.~Mazzocco and A.~Ram have defined a family of commuting operators in the affine Temperley--Lieb algebras. The operators of~\cite{hmram:2007} are analogues to the Jucys--Murphy elements from the representation theory of the symmetric group. For the purposes of these notes, it will be useful to define in $\mathcal{A}_n(x)$ a sequence $(T_i:i=0,1,\dots)$ by $T_0=0$, $T_1=0$, $T_2=e_1$, and 
\begin{align*}
T_{i+1}=-e_iT_i-T_ie_i+e_ie_{i-1}T_ie_i-z_{i-1}-e_iz_{i-2}, &&\text{for $i=2,3,\dots$}
\end{align*}
where, $z_1=0$, and 
\begin{align*}
z_i=\displaystyle{\sum_{k=0}^ix^kT_{i-k}},&&\text{for $i=2,3,\dots$.}
\end{align*}
\begin{lemma}
For $i=2,3,\dots,$ the following statements hold:
\begin{enumerate}
\item $e_{i+1}e_{i}T_{i}e_{i+1}=e_{i+1}T_{i}e_{i}e_{i+1}$;\label{comm:1}
\item $e_ie_{i-1}T_ie_i=e_iT_ie_{i-1}e_i$;\label{comm:2}
\item $z_i^*=z_i$ and $T_i^*=T_i$;\label{comm:3}
\item $e_{i-1}T_{i+1}=T_{i+1}e_{i-1}$;\label{comm:4}
\item $e_i(xT_{i}+T_{i+1})=(xT_{i}+T_{i+1})e_i$;\label{comm:6}
\item $T_i$ commutes with $\mathcal{A}_{i-1}(x)$;\label{comm:5}
\item the element $z_i=\sum_{k=0}^{i}x^{k}T_{i-k}$ is central in $\mathcal{A}_i(x)$\label{comm:7}.
\end{enumerate}
\end{lemma}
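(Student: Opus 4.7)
The plan is to establish all seven assertions by simultaneous strong induction on $i \ge 2$. The base case $i = 2$ is a direct calculation: $T_2 = e_1$, $z_2 = e_1$, and $T_0 = T_1 = z_0 = z_1 = 0$, so each identity reduces to a short manipulation of the defining relations \eqref{defrel:1}--\eqref{defrel:3}.

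For the inductive step, suppose all seven statements hold at all indices through $i$, and derive them at $i+1$ (or at $i$ in the case of statements such as (4) and (5), which already involve $T_{i+1}$). A useful structural preliminary is that $T_i,z_i\in\mathcal{A}_i(x)$ for every $i$, an immediate consequence of the recursion; this makes statement (1) at index $i+1$ automatic, because $e_{i+2}$ centralizes every generator of $\mathcal{A}_{i+1}(x)$ and therefore commutes with $T_{i+1}$, so both sides collapse to $e_{i+2}T_{i+1}$ via $e_{i+2}e_{i+1}e_{i+2}=e_{i+2}$. For statement (3) at index $i+1$ I apply $*$ to the recursion; the induction hypothesis supplies $T_i^*=T_i$ and $z_j^*=z_j$ for $j\le i$, and the two mismatched terms in the reversed recursion are reconciled using (2) at index $i$ (to pass between $e_iT_ie_{i-1}e_i$ and $e_ie_{i-1}T_ie_i$) and the fact that $z_{i-2}\in\mathcal{A}_{i-2}(x)$ commutes with $e_i$. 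The identity $z_{i+1}^*=z_{i+1}$ then follows at once from $T_j^*=T_j$ for all $j$.

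The substance of the step lies in proving (5) at index $i$, (7) at index $i+1$, (6) at index $i+1$, and (2) at index $i+1$, in that order. I verify (5) by direct expansion of $e_iT_{i+1}$ and $T_{i+1}e_i$ using the recursion, $e_i^2=xe_i$, the braid relation $e_ie_{i\pm1}e_i=e_i$, and (2) at index $i$. From (5) together with (7) at index $i$ one obtains (7) at index $i+1$: commutation of $z_{i+1}=T_{i+1}+xz_i$ with $e_1,\dots,e_{i-1}$ follows from (7) at index $i$ together with the easy cases of (6) at index $i+1$, while commutation with $e_i$ is exactly (5) after rewriting $xT_i+T_{i+1}=z_{i+1}-x^2z_{i-1}$ and using that $z_{i-1}\in\mathcal{A}_{i-1}(x)$ commutes with $e_i$. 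Statement (6) at index $i+1$ is then handled termwise: for $j\le i-2$ every factor in the recursion commutes with $e_j$ by (6) at index $i$ and the centrality of the various $z$'s, while for $j=i-1$ the crucial cancellation uses $e_{i-1}e_ie_{i-1}=e_{i-1}$, (2) at index $i$, and (7) at index $i$. Statement (2) at index $i+1$ then follows from (6) at index $i+1$ together with $e_{i+1}e_ie_{i+1}=e_{i+1}$, and (4) at index $i$ is the special case of (6) at index $i+1$ applied to the single generator $e_{i-1}$.

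The main obstacle is the direct verification of (5) at index $i$: the recursion contributes five summands to each side, and the required cancellations must be tracked by hand using (2) at index $i$, the braid relations, and the centrality of $z_{i-1}$ in $\mathcal{A}_{i-1}(x)$. Every other step in the scheme reduces either to invocation of a previously proved identity or to a short braid-relation manipulation.
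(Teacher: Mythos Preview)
Your inductive scheme is close to the paper's, but there is a genuine gap in your treatment of statement~(6) at index $i+1$. You assert that for $j\le i-2$ ``every factor in the recursion commutes with $e_j$''. This is false at $j=i-2$: in the summand $e_ie_{i-1}T_ie_i$ the factor $e_{i-1}$ does not commute with $e_{i-2}$, and in the summand $e_iz_{i-2}$ the element $z_{i-2}$ does not commute with $e_{i-2}$ either (centrality of $z_{i-2}$ is only in $\mathcal{A}_{i-2}(x)$, and $e_{i-2}\notin\mathcal{A}_{i-2}(x)$). Your argument therefore establishes $[T_{i+1},e_j]=0$ only for $j\le i-3$; the case $j=i-2$ is the genuinely hard one. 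In the paper this case is the longest computation in the proof: one reduces to showing that $e_iT_ie_{i-1}e_i-e_iz_{i-2}$ commutes with $e_{i-2}$, and this in turn is established by expanding via the recursion for $T_i$ and $T_{i-1}$ and checking that a certain combination is fixed by the anti-involution~$*$. Nothing in your outline supplies this step.

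A smaller point: your derivation of (2) at index $i+1$ cannot come from (6) at index $i+1$, since (6) only gives commutation of $T_{i+1}$ with $e_1,\dots,e_{i-1}$, not with $e_i$. One can instead obtain (2) at $i+1$ from (5) at $i$: write $e_{i+1}(e_iT_{i+1}-T_{i+1}e_i)e_{i+1}=x\,e_{i+1}(T_ie_i-e_iT_i)e_{i+1}$ and then use $e_{i+1}e_ie_{i+1}=e_{i+1}$ together with $[T_i,e_{i+1}]=0$. The paper takes a different route and proves (2) by its own induction, expanding $T_i$ and using (1).
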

\begin{proof}
The item~(\ref{comm:1}) follows from the relation $e_{i+1}e_ie_{i+1}=e_{i+1}$ and fact that $e_{i+1}$ commutes with $T_i$. Turning to the statement~(\ref{comm:2}), which is true when $i=2$, we proceed by induction. 
Since
\begin{align*}
T_{i}=-e_{i-1}T_{i-1}-T_{i-1}e_{i-1}+e_{i-1}e_{i-2}T_{i-1}e_{i-1}-z_{i-2}-z_{i-3}e_{i-1},
\end{align*}
applying~(\ref{comm:1}) yields 
\begin{align*}
e_{i}e_{i-1}T_{i}e_i&=-xe_{i}e_{i-1}T_{i-1}e_i-e_{i}e_{i-1}T_{i-1}e_{i-1}e_i\\
&\qquad\quad+xe_ie_{i-1}e_{i-2}T_{i-1}e_{i-1}e_i-e_ie_{i-1}z_{i-2}e_i-xe_ie_{i-1}z_{i-3}e_i\\
&=xe_{i}T_{i-1}-e_{i}e_{i-1}T_{i-1}e_{i-1}e_i\\
&\qquad\quad+xe_ie_{i-1}T_{i-1}e_{i-2}e_{i-1}e_i-e_ie_{i-1}e_iz_{i-2}-xe_ie_{i-1}e_iz_{i-3}\\
&=xe_{i}T_{i-1}-e_{i}e_{i-1}T_{i-1}e_{i-1}e_i\\
&\qquad\quad+xe_ie_{i-1}e_{i-2}T_{i-1}e_{i-1}e_i-e_iz_{i-2}-xe_iz_{i-3}\\
&=e_{i}T_{i}e_{i-1}e_i,
\end{align*}
as required. The statement~(\ref{comm:3}) follows from~(\ref{comm:2}), while 
\begin{align}
e_{i-1}T_{i+1}&=-e_{i-1}e_iT_i-e_{i-1}T_ie_i+e_{i-1}e_ie_{i-1}T_ie_i-e_{i-1}z_{i-1}-e_{i-1}z_{i-2}e_{i}\label{al:com}\\
&=-e_{i-1}e_iT_i-e_{i-1}T_ie_i+e_{i-1}T_ie_i-e_{i-1}z_{i-1}-e_{i-1}z_{i-2}e_{i}\notag\\
&=-e_{i-1}e_iT_i-e_{i-1}T_{i-1}-e_{i-1}z_{i-1}-e_{i-1}z_{i-2}e_i\notag.
\end{align}
Now,
\begin{align*}
e_{i-1}e_iT_i+e_{i-1}T_{i-1}&=-e_{i-1}e_ie_{i-1}T_{i-1}-e_{i-1}e_iT_{i-1}e_{i-1}\\
&\qquad+e_{i-1}e_ie_{i-1}e_{i-2}T_{i-1}e_{i-1}-e_{i-1}e_iz_{i-2}-e_{i-1}e_iz_{i-3}e_{i-1}+e_{i-1}T_{i-1}\\
&=-e_{i-1}T_{i-1}-e_{i-1}e_iT_{i-1}e_{i-1}\\
&\qquad+e_{i-1}e_{i-2}T_{i-1}e_{i-1}-e_{i-1}e_iz_{i-2}-e_{i-1}z_{i-3}+e_{i-1}T_{i-1}\\
&=-e_{i-1}e_iT_{i-1}e_{i-1}+e_{i-1}e_{i-2}T_{i-1}e_{i-1}-e_{i-1}e_iz_{i-2}-e_{i-1}z_{i-3},
\end{align*}
which, substituted into~\eqref{al:com}, yields
\begin{align*}
e_{i-1}T_{i+1}&=e_{i-1}e_iT_{i-1}e_{i-1}-e_{i-1}e_{i-2}T_{i-1}e_{i-1}+e_{i-1}z_{i-2}e_i-e_{i-1}z_{i-2}e_{i}+e_{i-1}z_{i-3}\\
&=e_{i-1}e_iT_{i-1}e_{i-1}-e_{i-1}e_{i-2}T_{i-1}e_{i-1}+e_{i-1}z_{i-3}\\
&=e_{i-1}T_{i-1}e_ie_{i-1}-e_{i-1}T_{i-1}e_{i-2}e_{i-1}+e_{i-1}z_{i-3}\\
&=T_{i+1}e_{i-1}.
\end{align*}
To see~(\ref{comm:6}), we have 
\begin{align*}
e_i(xT_i+T_{i+1})&=xe_iT_i-xe_iT_i-e_iT_ie_i+xe_ie_{i-1}T_ie_i-e_iz_{i-1}-xz_{i-2}e_i\\
&=-e_iT_ie_i+xe_ie_{i-1}T_ie_i-e_iz_{i-1}-xz_{i-2}e_i\\
&=-e_iT_ie_i+xe_iT_ie_{i-1}e_i-z_{i-1}e_i-xz_{i-2}e_i\\
&=(xT_i+T_{i+1})e_i.
\end{align*}
The proof of~(\ref{comm:5}) and~(\ref{comm:7}) is a joint induction. We assume that $e_k$ commutes with $T_i$ whenever $k=1,\dots,i-2$, and that $z_k$ is central in $\mathcal{A}_k(x)$ whenever $k=1,2,\dots,i$. Since it is already known that $T_{i+1}$ commutes with $e_{i-1}$, we first show that $T_{i+1}$ commutes with $e_{i-2}$. By item~(\ref{comm:4}),
\begin{align}
T_{i+1}e_{i-2}&=-e_{i}T_ie_{i-2}-T_ie_ie_{i-2}+e_{i}T_ie_{i-1}e_ie_{i-2}-z_{i-1}e_{i-2}-e_iz_{i-2}e_{i-2}\label{comm:e}\\
&=-e_{i-2}e_{i}T_i-e_{i-2}T_ie_i+e_{i}T_ie_{i-1}e_ie_{i-2}-e_{i-2}z_{i-1}-e_iz_{i-2}e_{i-2}.\notag
\end{align}
Hence we must show that $e_{i}T_{i}e_{i-1}e_i-e_iz_{i-2}$ commutes with $e_{i-2}$; to keep the indices within nice bounds, we demonstrate that $e_{k+1}T_{k+1}e_{k}e_{k+1}-e_{k+1}z_{k-1}$ commutes with $e_{k-1}$:
\begin{align*}
&e_{k+1}T_{k+1}e_ke_{k+1}e_{k-1}-e_{k+1}z_{k-1}e_{k-1}=-e_{k+1}e_kT_ke_ke_{k+1}e_{k-1}-xe_{k+1}T_ke_ke_{k+1}e_{k-1}\\
&\qquad+xe_{k+1}e_kT_ke_{k-1}e_ke_{k+1}e_{k-1}-e_{k+1}z_{k-1}e_ke_{k+1}e_{k-1}\\
&\qquad-xe_{k+1}e_kz_{k-2}e_{k+1}e_{k-1}-e_{k+1}z_{k-1}e_{k-1}\\
&=-e_{k+1}e_kT_ke_ke_{k+1}e_{k-1}-xe_{k+1}T_ke_{k-1}+xe_{k+1}T_ke_{k-1}-e_{k+1}z_{k-1}e_{k-1}\\
&\qquad-xe_{k+1}z_{k-2}e_{k-1}-e_{k+1}z_{k-1}e_{k-1}\\
&=-e_{k+1}e_kT_ke_ke_{k+1}e_{k-1}-xe_{k+1}e_{k-1}z_{k-2}-2e_{k+1}z_{k-1}e_{k-1}.
\end{align*}
Since $(e_{k+1}e_{k-1}z_{k-2})^*=e_{k+1}e_{k-1}z_{k-2}$, we consider
\begin{multline*}
e_{k+1}e_kT_ke_ke_{k+1}e_{k-1}+2e_{k+1}z_{k-1}e_{k-1}=-e_{k+1}e_ke_{k-1}T_{k-1}e_ke_{k+1}e_{k-1}\\
-e_{k+1}e_kT_{k-1}e_{k-1}e_ke_{k+1}e_{k-1}+e_{k+1}e_ke_{k-1}T_{k-1}e_{k-2}e_{k-1}e_ke_{k+1}e_{k-1}-e_{k+1}e_kz_{k-2}e_{k+1}e_{k-1}\\-e_{k+1}e_ke_{k-1}z_{k-3}e_ke_{k+1}e_{k-1}+2e_{k+1}z_{k-1}e_{k-1}\\
=-e_{k+1}T_{k-1}e_{k-1}-e_{k+1}T_{k-1}e_{k-1}+e_{k+1}e_{k-1}T_{k-1}e_{k-2}e_{k-1}-xe_{k+1}z_{k-2}e_{k-1}-e_{k+1}e_{k-1}z_{k-3}\\+2e_{k+1}z_{k-1}e_{k-1}\\
=e_{k+1}e_{k-1}T_{k-1}e_{k-2}e_{k-1}-xe_{k+1}z_{k-2}e_{k-1}-2e_{k+1}z_{k-2}e_{k-1}\\
=e_{k+1}e_{k-1}e_{k-2}T_{k-1}e_{k-1}-xe_{k-1}e_{k+1}z_{k-2}-2e_{k-1}e_{k+1}z_{k-2},
\end{multline*}
which shows that $e_{k+1}e_kT_ke_ke_{k+1}e_{k-1}+2e_{k+1}z_{k-1}e_{k-1}$ is fixed by $*:\mathcal{A}_{k+2}(z)\to\mathcal{A}_{k+2}(z)$, and therefore that $e_{i}T_{i}e_{i-1}e_{i}-e_{i}z_{i-2}$ commutes with $e_{i-2}$. Hence, by~\eqref{comm:e}, $T_{i+1}$ commutes with $e_{i-2}$. Now, if $k=1,\dots,i-3$, then by induction,
\begin{align*}
T_{i+1}e_k&=-e_iT_ie_k-T_ie_ie_k+e_ie_{i-1}T_ie_ie_k-z_{i}e_k-e_iz_{i-2}e_k\\
&=-e_ke_iT_i-e_kT_ie_i+e_ke_ie_{i-1}T_ie_i-e_kz_{i}-e_ke_iz_{i-2}=e_kT_{i+1}.
\end{align*}
The item~(\ref{comm:6}) shows that $z_{i+1}$ commutes with $e_1,\dots,e_i$, which completes the induction and the proof of the lemma.
\end{proof}
Since the elements of $(T_i:i=0,1,\dots)$ commute, we are justified in referring to the $T_i$ as ``Jucys--Murphy elements'' for the Temperley--Lieb algebras.

Define a sequence $(p_i\in R:i=0,1,\dots)$ by $p_0=0$, $p_1=1$ and $p_{i+1}=xp_i-p_{i-1}$ for $i=1,2,\dots$. For $k=1,2,\dots$, introduce a sequence $(T_i^{(k)}:i=0,1,\dots)$ by $T_0^{(k)}=0$, $T_1^{(k)}=0$, $T_2^{(k)}=e_{k}$, and, for $i=2,3,\dots$,
\begin{align*}
T_{i+1}^{(k)}=-e_{k+i-1}T_i^{(k)}-T_i^{(k)}e_{k+i-1}+e_{k+i-1}e_{k+i-2}T_i^{(k)}e_{k+i-1}-z_{i-1}^{(k)}-z_{i-2}^{(k)}e_{k+i-1},
\end{align*}
where  $z_1^{(k)}=0$, and 
\begin{align*}
z_i^{(k)}=\sum_{j=0}^ix^jT_{i-j}^{(k)},&&\text{for $i=2,3,\dots$,}
\end{align*}
so that the $T_{i}^{(k)}$, for $i=0,1,\dots$, and $k=1$, are just the Jucys--Murphy elements. 
\begin{lemma}\label{shrink}
For $i=1,2,\dots,$   $z_ie_1=e_1z^{(3)}_{i-2}+p_ie_1$, and $T_{i+1}e_1=e_1T^{(3)}_{i-1}-p_{i-1}e_1$. 
\end{lemma}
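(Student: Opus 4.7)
The plan is a joint induction on $i$. First observe the three elementary recursions
\begin{align*}
z_i = T_i + x z_{i-1}, && z_i^{(3)} = T_i^{(3)} + x z_{i-1}^{(3)}, && p_i = x p_{i-1} - p_{i-2},
\end{align*}
each of which is immediate from the defining formulas. Granted the $T$-identity, the $z$-identity follows essentially at once: if by induction $T_i e_1 = e_1 T_{i-2}^{(3)} - p_{i-2} e_1$ and $z_{i-1} e_1 = e_1 z_{i-3}^{(3)} + p_{i-1} e_1$, then
\begin{align*}
z_i e_1 = T_i e_1 + x z_{i-1} e_1 = e_1 \bigl(T_{i-2}^{(3)} + x z_{i-3}^{(3)}\bigr) + (x p_{i-1} - p_{i-2}) e_1 = e_1 z_{i-2}^{(3)} + p_i e_1.
\end{align*}

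Hence the crux is the identity $T_{i+1} e_1 = e_1 T_{i-1}^{(3)} - p_{i-1} e_1$, which I would also prove by induction on $i$. The base case $i = 2$ is a direct check: $T_3 = -e_2 e_1 - e_1 e_2 + x e_2$, so $T_3 e_1 = -e_1 = e_1 T_1^{(3)} - p_1 e_1$. For the inductive step, with $i \geq 3$, apply the defining recursion
\begin{align*}
T_{i+1} = -e_i T_i - T_i e_i + e_i e_{i-1} T_i e_i - z_{i-1} - e_i z_{i-2}
\end{align*}
on the right by $e_1$. Since $i \geq 3$, the generator $e_1$ commutes with $e_i$ and $e_{i-1}$; moreover, $T_{i-2}^{(3)}$, $z_{i-3}^{(3)}$, and $z_{i-4}^{(3)}$ all lie in the subalgebra $\langle e_3, e_4, \dots \rangle$, so each commutes with $e_1$, while the location $z_{i-4}^{(3)} \in \langle e_3, \dots, e_{i-3}\rangle$ also gives $[e_i, z_{i-4}^{(3)}] = 0$. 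Substituting the three inductive hypotheses for $T_i e_1$, $z_{i-1} e_1$, and $z_{i-2} e_1$, and repeatedly invoking $e_i e_{i-1} e_i = e_i$ (so that $e_i e_{i-1} e_1 e_i = e_1 e_i$), transforms the right-hand side into
\begin{align*}
e_1 \bigl(-e_i T_{i-2}^{(3)} - T_{i-2}^{(3)} e_i + e_i e_{i-1} T_{i-2}^{(3)} e_i - z_{i-3}^{(3)} - z_{i-4}^{(3)} e_i\bigr) - p_{i-1} e_1,
\end{align*}
whose parenthesized factor is precisely the defining recursion for $T_{i-1}^{(3)}$ (with shift parameter $k = 3$ and step index $j = i-2$).

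The principal obstacle is the bookkeeping at this last step: one must verify that the four $p_{i-2} e_1 e_i$ contributions produced by the substitution appear with signs $+, +, -, -$ and therefore cancel, and that the remaining $T_{i-2}^{(3)}$- and $z^{(3)}$-contributions land in exactly the order demanded by the recursion for $T_{i-1}^{(3)}$. In particular, the term $-e_i z_{i-4}^{(3)}$ must be rewritten as $-z_{i-4}^{(3)} e_i$ using the commutation remarked above. Beyond this accounting, no new structural input is required: the commutation facts between $T_i$, $z_j$, and the $e_k$ have all been assembled in the preceding lemma.
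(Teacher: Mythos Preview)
Your proof is correct and follows essentially the same joint induction as the paper: reduce the $z$-identity to the $T$-identity via $z_i = T_i + xz_{i-1}$, then push $e_1$ through the defining recursion for $T_{i+1}$ and match the result against the recursion for $T_{i-1}^{(3)}$. One small caveat (which the paper's own proof shares): at $i=3$ your claimed commutation $[e_1,e_{i-1}]=0$ fails and the hypothesis on $z_{i-2}e_1=z_1e_1$ is unavailable, so strictly speaking $i=3$ should be checked directly as a second base case; the verification is immediate since $T_1^{(3)}=0$ and $z_1=0$.
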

\begin{proof}
Note that $e_1z_2=p_2e_1+e_1z_1^{(3)}$ and $e_1T_3=e_1T_1^{(3)}-p_1e_1$, and proceed by induction. If $i\ge 3$, then 
\begin{align*}
T_{i+1}e_1&=-e_iT_ie_1-T_ie_1e_i+e_ie_{i-1}T_ie_1e_i-z_{i-1}e_1-e_iz_{i-2}e_1\\
&=-e_1e_iT_{i-2}^{(3)}-e_1T_{i-2}^{(3)}e_i+2p_{i-2}e_1e_i+e_1e_ie_{i-1}T_{i-2}^{(3)}e_i-p_{i-2}e_1e_{i}e_{i-1}e_{i}\\
&\qquad-e_1z_{i-3}^{(3)}-p_{i-1}e_1-e_1e_iz_{i-4}^{(3)}-p_{i-2}e_1e_i\\
&=-e_1e_iT_{i-2}^{(3)}-e_1T_{i-2}^{(3)}e_i+e_1e_ie_{i-1}T_{i-2}^{(3)}e_i-e_1z_{i-3}^{(3)}-e_1e_iz_{i-4}^{(3)}-p_{i-1}e_1\\
&=e_1T_{i-1}^{(3)}-p_{i-1}e_1,
\end{align*}
while
\begin{align*}
z_ie_1=xz_{i-1}e_1+T_ie_1&= xe_1z_{i-3}^{(3)}+xp_ie_1+e_1T_{i-1}^{(3)}-p_{i-1}e_1\\
&=e_1(xz_{i-3}^{(3)}+T_{i-2}^{(3)})+(xp_{i-1}-p_{i-2})e_1=e_1z_{i-2}^{(3)}+p_ie_1,
\end{align*}
as required.
\end{proof}
\begin{corollary}\label{cor:1}
For $i=1,2,\dots$, and $k=1,2,\dots$,  
\begin{align*}
z_i^{(k)}e_k=e_kz^{(k+2)}_{i-2}+p_ie_k,&&\text{and}&&T_{i+1}^{(k)}e_k=e_kT^{(k+2)}_{i-1}-p_{i-1}e_k.
\end{align*}
\end{corollary}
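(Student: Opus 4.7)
The plan is to reduce the corollary to Lemma~\ref{shrink} via an obvious ``shift'' isomorphism. Observe that the defining recurrences for $T_i^{(k)}$ and $z_i^{(k)}$ are obtained from those for $T_i=T_i^{(1)}$ and $z_i=z_i^{(1)}$ simply by replacing every generator $e_j$ with $e_{j+k-1}$. Consequently, if we let $\mathcal{B}^{(k)}$ denote the subalgebra of $\mathcal{A}_{n}(x)$ (with $n$ sufficiently large) generated by $e_k,e_{k+1},\ldots,e_{n-1}$, then the $R$--algebra homomorphism $\sigma_k:\mathcal{A}_{n-k+1}(x)\to\mathcal{B}^{(k)}$ defined on generators by $e_j\mapsto e_{j+k-1}$ is an isomorphism (it is obviously surjective, and the defining relations of the Temperley--Lieb algebra are preserved, so it is also injective on the free $R$--module).

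The key observation is then that, under $\sigma_k$, the sequences $(T_i)_{i\ge 0}$ and $(z_i)_{i\ge 0}$ defined in $\mathcal{A}_{n-k+1}(x)$ are mapped term by term onto $(T_i^{(k)})_{i\ge 0}$ and $(z_i^{(k)})_{i\ge 0}$ respectively. This is verified by a straightforward induction on $i$: the base cases $T_0^{(k)}=T_1^{(k)}=0$, $T_2^{(k)}=e_k=\sigma_k(e_1)=\sigma_k(T_2)$ match, and each recursive step for $T_{i+1}^{(k)}$ (respectively $z_i^{(k)}$) is the image under $\sigma_k$ of the corresponding recursive step for $T_{i+1}$ (respectively $z_i$), since $\sigma_k$ is an algebra homomorphism and the recurrences have the same form.

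Having established $\sigma_k(T_i)=T_i^{(k)}$ and $\sigma_k(z_i)=z_i^{(k)}$, note also that $\sigma_k(e_1)=e_k$ and $\sigma_k(T_i^{(3)})=T_i^{(k+2)}$, $\sigma_k(z_i^{(3)})=z_i^{(k+2)}$ (the last two because $\sigma_k$ composed with the shift by $2$ agrees with the shift by $k+1$ applied to the base sequences, or equivalently by a further application of the same inductive observation). Applying $\sigma_k$ to both sides of the identities
\begin{align*}
z_ie_1=e_1z_{i-2}^{(3)}+p_ie_1,&&T_{i+1}e_1=e_1T_{i-1}^{(3)}-p_{i-1}e_1,
\end{align*}
furnished by Lemma~\ref{shrink} (and remembering that the coefficients $p_i\in R$ are unchanged by $\sigma_k$), yields exactly
\begin{align*}
z_i^{(k)}e_k=e_kz_{i-2}^{(k+2)}+p_ie_k,&&T_{i+1}^{(k)}e_k=e_kT_{i-1}^{(k+2)}-p_{i-1}e_k,
\end{align*}
as required.

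The only step requiring care is the verification that $\sigma_k$ intertwines the two families of operators, but because the recurrences defining $T_i^{(k)}$ and $z_i^{(k)}$ were set up by the same template as those for $T_i$ and $z_i$, this is a routine induction with no combinatorial obstacle. Thus the corollary is essentially a restatement of Lemma~\ref{shrink} relative to the subalgebra $\mathcal{B}^{(k)}$.
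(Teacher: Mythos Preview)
Your argument is correct and matches the paper's intent: the paper states Corollary~\ref{cor:1} without proof, as an immediate consequence of Lemma~\ref{shrink}, and the shift homomorphism $\sigma_k:e_j\mapsto e_{j+k-1}$ you describe is precisely the implicit justification. One minor remark: you only need $\sigma_k$ to be a well-defined algebra homomorphism (which follows from the relations being preserved) to transport the identities; injectivity is not required, so your parenthetical claim about injectivity, while true, is unnecessary for the argument.
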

The following elementary lemma will be used to give the eigenvalues of the Jucys--Murphy elements. 
\begin{lemma}
The sequence $(p_i\in R:i=0,1,\dots)$ satisfies the following relations:
\begin{align}
p_{2j}+p_{2j+2}+\cdots+p_{2i}&=p_{i-k+1}p_{i+k}-p_{j-k}p_{j+k+1},&&\text{for $k\in\{1,2,\dots,j\}$;}\label{stat:1}\\
p_{2j+1}+p_{2j+3}+\cdots+p_{2i+1}&=p_{i-k+1}p_{i+k+1}-p_{j-k}p_{j+k},&&\text{for $k\in\{0,1,\dots,j\}$.}\label{stat:2} 
\end{align}
\end{lemma}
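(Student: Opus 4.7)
The plan is to reduce both identities to a single product--difference identity for the sequence $(p_i)$ and then establish $k$--independence of the right--hand side so that only one convenient value of $k$ needs to be checked.

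First I would establish the auxiliary identity
\begin{align*}
p_a p_b - p_{a-1} p_{b+1} = p_{b-a+1}, \qquad a \geq 1,
\end{align*}
which is the fundamental relation governing Chebyshev--like sequences of this form. To prove it, write $p_{b+1} = xp_b - p_{b-1}$ and use $p_a - xp_{a-1} = -p_{a-2}$ from the defining recurrence to obtain
\begin{align*}
p_a p_b - p_{a-1} p_{b+1} = p_b(p_a - xp_{a-1}) + p_{a-1}p_{b-1} = p_{a-1} p_{b-1} - p_{a-2} p_b,
\end{align*}
so the expression is invariant under $(a,b) \mapsto (a-1,b-1)$; iterating $a-1$ times collapses it to the base case $p_1 p_{b-a+1} - p_0 p_{b-a+2} = p_{b-a+1}$.

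With this identity in hand, the second step is to show that the right--hand side of each of~\eqref{stat:1} and~\eqref{stat:2} is independent of $k$ in the stated range by telescoping. For~\eqref{stat:2}, the difference $\mathrm{RHS}(k+1) - \mathrm{RHS}(k)$ groups as
\begin{align*}
\bigl(p_{i-k} p_{i+k+2} - p_{i-k+1} p_{i+k+1}\bigr) + \bigl(p_{j-k} p_{j+k} - p_{j-k-1} p_{j+k+1}\bigr),
\end{align*}
and the key identity (in its two forms $p_a p_b - p_{a-1}p_{b+1} = p_{b-a+1}$ and its negative) evaluates the two parentheses to $-p_{2k+1}$ and $+p_{2k+1}$ respectively, so the difference vanishes. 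An analogous grouping handles~\eqref{stat:1}.

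Finally, I would evaluate at the extreme value of $k$ that simplifies the expression most: for~\eqref{stat:2} take $k=0$ to obtain $p_{i+1}^2 - p_j^2$, and match this to the sum $\sum_{\ell=j}^i p_{2\ell+1}$ by induction on $i-j$, the inductive step reducing to the telescoping identity $p_{i+2}^2 - p_{i+1}^2 = p_{2i+3}$ (itself a consequence of the same product--to--sum manipulation); the analogous extreme for~\eqref{stat:1} yields a product of two consecutive $p$'s that matches the even--indexed sum by the same device. The main obstacle I anticipate is purely bookkeeping --- keeping the four indices of the telescoping straight and checking that the hypothesis $k \le j$ keeps every subscript of the form $j-k$ non--negative --- but once the product--difference identity is secured, the rest is a short formal calculation.
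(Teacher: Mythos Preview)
Your proposal is correct, and rests on the same shift-via-recurrence mechanism as the paper, but you organise the telescoping differently. The paper first proves the single-term identities
\[
p_{2i}=p_{i-k+1}p_{i+k}-p_{i-k}p_{i+k-1},\qquad
p_{2i+1}=p_{i-k+1}p_{i+k+1}-p_{i-k}p_{i+k},
\]
which are the ``diagonal'' cousins of your identity (the difference depends on $a+b$ rather than on $b-a$), and then simply sums over the running index so that the whole thing collapses in a single telescope. You instead establish the antidiagonal identity $p_ap_b-p_{a-1}p_{b+1}=p_{b-a+1}$, use it to show $k$-independence of the right-hand side, and then verify the formula at one convenient $k$ by a second induction---an induction whose step $p_{m+1}^2-p_m^2=p_{2m+1}$ is precisely the paper's single-term identity at $k=0$. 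So your route works but is a little longer: once one has the paper's single-term formula each summand is already a telescoping difference, and no separate $k$-independence argument is needed. The compensating advantage of your framing is that it isolates $k$-independence as a clean structural consequence of the Chebyshev product law, which is conceptually pleasant even if not strictly necessary here.
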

\begin{proof}
We first show that, 
\begin{align}
p_{2i}&=p_{i-k+1}p_{i+k}-p_{i-k}p_{i+k-1},&&\text{for $k=1,\dots,i$.}\label{stat:0}\\
p_{2i+1}&=p_{i-k+1}p_{i+k+1}-p_{i-k}p_{i+k},&&\text{for $k=0,\dots,i$.}\label{stat:0b}
\end{align}
Since~\eqref{stat:0} and~\eqref{stat:0b} both hold when $k=i$, by induction,
\begin{align*}
p_{i-k+2}p_{i+k-1}-p_{i-k+1}p_{i+k-2}&=(xp_{i-k+1}-p_{i-k})p_{i+k-1}-p_{i-k+1}p_{i+k-2}\\
&=p_{i-k+1}(xp_{i+k-1}-p_{i-k-2})-p_{i-k}p_{i+k-1}\\
&=p_{i-k+1}p_{i+k}-p_{i-k}p_{i+k-1}=p_{2i},
\intertext{and,}
p_{i-k+2}p_{i+k}-p_{i-k+1}p_{i+k-1}&=(xp_{i-k+1}-p_{i-k})p_{i+k}-p_{i-k+1}p_{i+k-1}\\
&=p_{i-k+1}(xp_{i+k}-p_{i-k-1})-p_{i-k}p_{i+k}\\
&=p_{i-k+1}p_{i+k+1}-p_{i-k}p_{i+k}=p_{2i+1}.
\end{align*}
From~\eqref{stat:0}, we have, for $k\in\{1,\dots,j\}$, a telescoping sum 
\begin{multline*}
p_{2j}+p_{2j+2}+\cdots+p_{2i}=p_{j-k+1}p_{j+k}-p_{j-k}p_{j+k-1}\\
+p_{j-k+2}p_{j+k+1}-p_{j-k+1}p_{j+k}+\cdots+p_{i-k+1}p_{i+k}-p_{i-k}p_{i+k-1}\\
=p_{i-k+1}p_{i+k}-p_{j-k}p_{j+k-1}.
\end{multline*}
Similarly, from~\eqref{stat:0b}, we have, for $k\in\{0,\dots,j\}$, a sum
\begin{multline*}
p_{2j+1}+p_{2j+3}+\cdots+p_{2i+1}=p_{j-k+1}p_{j+k+1}-p_{j-k}p_{j+k}+\\
+p_{j-k+2}p_{j+k+2}-p_{j-k+1}p_{j+k+1}+\cdots+p_{i-k+1}p_{i+k+1}-p_{i-k}p_{i+k}\\
=p_{i-k+1}p_{i+k+1}-p_{j-k}p_{j+k}.
\end{multline*} 
\end{proof}
It will be useful to note that from the previous lemma:
\begin{align*}
&p_4+p_6+\cdots+p_{2i}=p_{i-1}p_{i+2}=(xp_i-p_{i+1})p_{i+2};\\
&p_{2j}+p_{2j+2}+\cdots+p_{2i}=p_{i-j+1}p_{i+j}&&\text{for $j=1,2,\dots,i-1$};\\
&p_{2j+1}+p_{2j+3}+\cdots+p_{2i+1}=p_{i-j+1}p_{i+j+1}&&\text{for $j=0,1,\dots,i-1$.}
\end{align*}
\begin{lemma}
If $\lambda=(f,n-2f)$ is a partion of $n$, and $i$ is an integer, $1\le i\le n$, then
\begin{align*}
m_{\mathfrak{t}^\lambda}z_i=
\begin{cases}
p_{f}p_{i-f+1}m_{\mathfrak{t}^\lambda},&\text{if $2f+1\le i \le n$;}\\
p_{k}p_{i-k+1}m_{\mathfrak{t}^\lambda},&\text{if $i=2k$, or $i=2k+1$, and $0\le k\le f$.}
\end{cases}
\end{align*}
\end{lemma}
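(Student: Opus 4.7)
The plan is to induct on $f$, with $n$ and $i$ arbitrary. For the base case $f = 0$, $\lambda = (0, n)$ and $m_\lambda = 1$; the ideal $\check{\mathcal{A}}_n^{(0, n)}$ is the augmentation ideal generated by any (hence all) $e_j$. A short induction on the defining recursion for $T_j$ shows $T_j \in \check{\mathcal{A}}_n^{(0, n)}$ for $j \geq 2$, whence $z_i \in \check{\mathcal{A}}_n^{(0, n)}$ for every $i$; this matches the vanishing of the right-hand side of the claim in this case.

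For the inductive step, fix $f \geq 1$ and factor $m_\lambda = e_1 m^{(3)}$, where $m^{(3)} = e_3 e_5 \cdots e_{2f-1}$. The index shift $e_j \mapsto e_{j+2}$ identifies $\langle e_3, \ldots, e_{n-1}\rangle$ with $\mathcal{A}_{n-2}(x)$; under this identification $m^{(3)}$ corresponds to $m_{(f-1,\, n-2f)}$ and $z_j^{(3)}$ to $z_j$. Two commutation facts drive the calculation: $e_1$ commutes with each $e_j$ for $j \geq 3$, hence with $m^{(3)}$; and for $i \geq 2$, centrality of $z_i$ in $\mathcal{A}_i$ together with Lemma~\ref{shrink} yield
\begin{align*}
e_1 z_i = z_i e_1 = e_1 z_{i-2}^{(3)} + p_i e_1.
\end{align*}
Combining these,
\begin{align*}
m_\lambda z_i = m^{(3)}(e_1 z_i) = e_1 \bigl(m^{(3)} z_{i-2}^{(3)}\bigr) + p_i \, m_\lambda.
\end{align*}
The induction hypothesis, applied inside the shifted copy at partition $\lambda' = (f-1, n-2f)$ and index $i-2$, gives $m^{(3)} z_{i-2}^{(3)} \equiv c_{i-2}^{(f-1)} \, m^{(3)}$ modulo the shifted analogue of $\check{\mathcal{A}}_{n-2}^{\lambda'}$, where $c_i^{(f)}$ denotes the scalar predicted by the claim. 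The crucial ideal containment $e_1 \cdot \check{\mathcal{A}}^{(3),\lambda'} \subseteq \check{\mathcal{A}}_n^\lambda$ holds because each generator $m^{(3)}_{(k,\, n-2-2k)}$ (with $k \geq f$) satisfies $e_1 m^{(3)}_{(k,\, n-2-2k)} = m_{(k+1,\, n-2k-2)} \in \mathcal{A}_n^{(k+1,\, n-2k-2)} \subseteq \check{\mathcal{A}}_n^\lambda$, and $e_1$ commutes past every other factor coming from the shifted subalgebra. Consequently
\begin{align*}
m_\lambda z_i \equiv \bigl(c_{i-2}^{(f-1)} + p_i\bigr)\, m_\lambda \pmod{\check{\mathcal{A}}_n^\lambda};
\end{align*}
the degenerate cases $i \in \{0, 1\}$ are trivial since $z_0 = z_1 = 0$.

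It remains to verify that the recurrence $c_i^{(f)} = c_{i-2}^{(f-1)} + p_i$ reproduces the formula in the statement. Each of the four cases ($i \geq 2f+1$; $i = 2f$; $i = 2k$ with $1 \leq k \leq f-1$; $i = 2k+1$ with $1 \leq k \leq f-1$) reduces to a single application of the Chebyshev-type identity $p_{m+n} = p_m p_{n+1} - p_{m-1} p_n$, which follows from $p_{j+1} = x p_j - p_{j-1}$ by a routine induction and is equivalent to the identities recorded in the preceding lemma. For instance, in the top case the required equality $p_f p_{i-f+1} - p_{f-1} p_{i-f} = p_i$ is this identity with $m = f$, $n = i - f$. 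The principal obstacle in the whole argument is the ideal containment $e_1 \cdot \check{\mathcal{A}}^{(3),\lambda'} \subseteq \check{\mathcal{A}}_n^\lambda$, since this is where the dominance order on partitions $(k,\, n-2k)$ must mesh with the algebraic shift in Lemma~\ref{shrink}; once this is clean, everything else is formal commutation, an induction hypothesis, and a polynomial identity.
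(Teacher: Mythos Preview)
Your proof is correct, and it takes a genuinely different route from the paper's. The paper inducts on $i$: for each $i$ it computes $m_\lambda T_i$ by repeatedly applying Corollary~\ref{cor:1} to push $T_i$ past $e_1, e_3,\dots,e_{2f-1}$ in turn, which produces a sum $\sum_j p_{2j}$ or $\sum_j p_{2j+1}$; these sums are then collapsed via the telescoping identities~\eqref{stat:1}--\eqref{stat:2}, and finally $z_i = xz_{i-1}+T_i$ closes the induction. By contrast you induct on $f$: you peel off only $e_1$, invoke Lemma~\ref{shrink} a single time to obtain $m_\lambda z_i = e_1\bigl(m^{(3)} z_{i-2}^{(3)}\bigr) + p_i m_\lambda$, and then appeal to the inductive hypothesis for $m^{(3)}$ in the shifted copy of $\mathcal{A}_{n-2}(x)$. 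This converts the problem into the scalar recurrence $c_i^{(f)} = c_{i-2}^{(f-1)} + p_i$, which you verify with one instance of the addition formula $p_{m+n}=p_m p_{n+1}-p_{m-1}p_n$.

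What each approach buys: the paper's argument is self-contained at each level~$i$ and never has to worry about matching ideals across a shift of algebras, at the cost of the somewhat heavier bookkeeping of tracking the partial sums and invoking~\eqref{stat:1}--\eqref{stat:2}. Your argument is shorter and more structural---one application of Lemma~\ref{shrink}, one recurrence, one polynomial identity---but it does require the ideal containment $e_1\cdot\check{\mathcal{A}}^{(3),\lambda'}\subseteq\check{\mathcal{A}}_n^\lambda$, which you justify correctly (the paper uses essentially the same containment implicitly when it drops $m_\lambda T^{(2f+1)}_{j}$ modulo $\check{\mathcal{A}}_n^\lambda$, but does not spell it out). Both proofs ultimately rest on the same reduction Lemma~\ref{shrink}; yours just organises the induction so that the lemma is applied once rather than $f$ times.
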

\begin{proof}
The statement being true when $i\le 1$, we proceed by induction, first considering the case where $i\in\{2k,2k+1:0\le k\le f\}$. If $2=k\le f$ and and $z_{2k-1}$ acts on $m_{\mathfrak{t}^\lambda}$ by the scalar $p_{k-1}p_{k+1}$, then
\begin{align*}
m_\lambda T_{2k}=e_1e_3 T_{2k}=e_1e_{3}T_{2}^{(3)}-p_{2}e_{1}e_{3}=0,
\end{align*}
so that, in this instance, $z_{2k}=(xz_{2k-1}+T_{2k})$ acts by $xp_{k-1}p_{k+1}=p_kp_{k+1}$. If $2<k\le f$, and $z_{2k-1}$ acts on $m_{\mathfrak{t}^\lambda}$ by the scalar $p_{k-1}p_{k+1}$, then
\begin{align*}
e_1e_3\cdots e_{2k-1}T_{2k}&=e_1e_3\cdots e_{2k-1}T_{4}^{(2k-3)}-(p_4+p_6+\cdots+p_{2k-2})e_1e_3\cdots e_{2k-1}\\
&=-(p_4+p_6+\cdots+p_{2k-2})e_1e_3\cdots e_{2k-1}\\
&=(p_{k}p_{k+1}-xp_{k-1}p_{k+1})e_1e_3\cdots e_{2k-1},
\end{align*}
so that
\begin{align*}
m_{\mathfrak{t}^\lambda}z_{2k}=m_{\mathfrak{t}^\lambda}(xz_{2k-1}+T_{2k})=(xp_{k-1}p_{k+1}+p_{k}p_{k+1}-xp_{k-1}p_{k+1})m_{\mathfrak{t}^\lambda}=p_{k}p_{k+1}m_{\mathfrak{t}^\lambda}.
\end{align*}
If $0\le k\le f$, and $z_{2k}$ acts on $m_{\mathfrak{t}^\lambda}$ by the scalar $p_kp_{k+1}$, then
\begin{align*}
e_1e_3\cdots e_{2k-1}T_{2k+1}&=e_1e_3\cdots e_{2k-1}T_{1}^{(2k+1)}-(p_1+p_3+\cdots+p_{2k-1})e_1e_3\cdots e_{2k-1}\\
&=(p_kp_{k+2}-xp_kp_{k+1})e_1e_3\cdots e_{2k-1},
\end{align*}
so that
\begin{align*}
m_{\mathfrak{t}^\lambda}z_{2k+1}=m_{\mathfrak{t}^\lambda}(xz_{2k}+T_{2k+1})=(xp_{k}p_{k+1}+p_kp_{k+2}-xp_{k}p_{k+1})m_{\mathfrak{t}^\lambda}=p_kp_{k+2}m_{\mathfrak{t}^\lambda}.
\end{align*}

Now we turn our attention to the action of $z_i$ case where $2f+1\le i\le n$. If $i=2k$, where $f<k$, and $z_{i-1}$ acts on $m_{\mathfrak{t}^\lambda}$ by the scalar $p_fp_{2k-f}$, then 
\begin{align*}
m_\lambda T_{2k}&=e_1e_3\cdots e_{2f-1}T_{2k}=e_1e_3\cdots e_{2f-1}T_{2k-2f}^{(2f+1)}-\sum_{j=k-f}^{k-1}p_{2j}e_1e_3\cdots e_{2f-1}\\
&\equiv -\sum_{j=k-f}^{k-1}p_{2j}m_\lambda=-p_{f}p_{2k-f-1}m_\lambda \mod{\check{\mathcal{A}}_n^\lambda},
\end{align*}
so $z_{2k}$ acts on $m_{\mathfrak{t}^\lambda}$ by the scalar 
\begin{align*}
xp_fp_{2k-f}-p_fp_{2k-f-1}=p_f(xp_{2k-f}-p_{2k-f-1})=p_fp_{2k-f+1}=p_fp_{i-f+1}.
\end{align*}
Similarly, if $i=2k+1$, and $z_{i-1}$ acts on $m_{\mathfrak{t}^\lambda}$ by the scalar $p_fp_{2k-f+1}$ then 
\begin{align*}
m_\lambda T_{2k+1}&=e_1e_3\cdots e_{2f-1}T_{2k+1}=e_1e_3\cdots e_{2f-1}T_{2k-2f+1}^{(2f+1)}-\sum_{j=k-f}^{k-1}p_{2j+1}e_1e_3\cdots e_{2f-1}\\
&\equiv -\sum_{j=k-f}^{k-1}p_{2j+1}m_\lambda=-p_{f}p_{2k-f}m_\lambda \mod{\check{\mathcal{A}}_n^\lambda},
\end{align*}
so $z_{2k+1}$ acts on $m_{\mathfrak{t}^\lambda}$ by the scalar 
\begin{align*}
xp_fp_{2k-f+1}-p_fp_{2k-f}=p_f(xp_{2k-f+1}-p_{2k-f})=p_fp_{2k-f+2}=p_fp_{i-f+1}.
\end{align*}
\end{proof} 
Let $\lambda$ be a partition of $n$ and $\mathfrak{t}\in\mathfrak{T}_n(\lambda)$.  Define a sequence $(r_\mathfrak{t}(k)\in R:k=0,1,\dots,n)$ by $r_\mathfrak{t}(0)=0$, $r_\mathfrak{t}(1)=0$, and 
\begin{align*}
r_{\mathfrak{t}}(k)=
\begin{cases}
(p_i-xp_{i-1})p_{k-i+1}, &\text{if $\SHAPE(\mathfrak{t}|_k)=(i,k-2i)$,}\\
&\qquad\text{and $\SHAPE(\mathfrak{t}|_{k-1})=(i-1,k-2i+1)$;}\\
(p_{k-i+1}-xp_{k-i})p_i,&\text{if $\SHAPE(\mathfrak{t}|_k)=(i,k-2i)$,}\\
 &\qquad\text{and $\SHAPE(\mathfrak{t}|_{k-1})=(i,k-2i-1)$,}
\end{cases}
\end{align*}
for $k=2,3,\dots,n$.
\begin{corollary}
Let $\lambda$ be a partition of $n$ and $\mathfrak{t}\in\mathfrak{T}_{n}(\lambda)$. If $k$ is an integer, $0\le k\le n$, then there exist $a_\mathfrak{v}\in R$, for $\mathfrak{v}\in\mathfrak{T}_n(\lambda)$, such that 
\begin{align*}
m_\mathfrak{t} T_k=r_\mathfrak{t}(k)m_\mathfrak{t}+\sum_{\substack{\mathfrak{v}\rhd\mathfrak{t}}}a_\mathfrak{v}m_\mathfrak{v}.
\end{align*}
\end{corollary}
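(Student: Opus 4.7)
My plan is to use the identity $T_k = z_k - xz_{k-1}$ and handle each of the two terms separately via the centrality of $z_k$ in $\mathcal{A}_k(x)$ (and $z_{k-1}$ in $\mathcal{A}_{k-1}(x)$). Letting $c_\mu\in R$ denote the scalar by which $z_j$ acts on the cell module $C^\mu$ of $\mathcal{A}_j(x)$ (as computed in the previous lemma), the aim is to show, for $j\in\{k-1,k\}$, the triangularity
\[
m_\mathfrak{t}\,z_j = c_{\mathfrak{t}|_j}\,m_\mathfrak{t} + \sum_{\mathfrak{v}\rhd\mathfrak{t}} b^{(j)}_\mathfrak{v}\,m_\mathfrak{v},
\]
and then subtract. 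Note that the case $\mathfrak{t}=\mathfrak{t}^\lambda$ of this is precisely the previous lemma, which serves as a base case.

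For the triangularity, I would iterate Lemma~\ref{prel:a} down the tower $\mathcal{A}_n(x)\supset\mathcal{A}_{n-1}(x)\supset\cdots\supset\mathcal{A}_j(x)$ to obtain a filtration of $C^\lambda$ as an $\mathcal{A}_j(x)$-module whose successive quotients are cell modules $C^{\mathfrak{t}|_j}$ of $\mathcal{A}_j(x)$, indexed by the tail $(\mathfrak{t}|_j,\mathfrak{t}|_{j+1},\dots,\lambda)$. Since $z_j$ is central in $\mathcal{A}_j(x)$, it acts by the scalar $c_{\mathfrak{t}|_j}$ on the subquotient containing $m_\mathfrak{t}$; this gives the diagonal term and triangularity \emph{in the tail order}, i.e.\ modulo the span of $m_\mathfrak{v}$'s with $\SHAPE(\mathfrak{v}|_i)\unrhd\SHAPE(\mathfrak{t}|_i)$ for $i\ge j$. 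To promote this to the full order $\rhd$ on $\mathfrak{T}_n(\lambda)$, I would exploit that $z_j$ commutes with the whole chain $\mathcal{A}_1(x)\subseteq\cdots\subseteq\mathcal{A}_{j-1}(x)$ and is therefore an $\mathcal{A}_i(x)$-module endomorphism of $C^\lambda$ for every $i\le j-1$; combined with the cellular basis expansion of Lemma~\ref{stronger}, applying the commutation identity $(m_\mathfrak{t} z_j)\,b = (m_\mathfrak{t}\,b)\,z_j$ for generators $b\in\mathcal{A}_{j-1}(x)$ forces any coefficient $b^{(j)}_\mathfrak{v}$ with $\SHAPE(\mathfrak{v}|_i)\not\unrhd\SHAPE(\mathfrak{t}|_i)$ at some level $i<j$ to vanish (a proof by induction on $\rhd$, with $\mathfrak{t}^\lambda$ as base, avoids any direct diagrammatic computation).

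Subtracting the expressions for $z_k$ and $z_{k-1}$ then gives
\[
m_\mathfrak{t}\,T_k = \bigl(c_{\mathfrak{t}|_k}-xc_{\mathfrak{t}|_{k-1}}\bigr)\,m_\mathfrak{t} + \sum_{\mathfrak{v}\rhd\mathfrak{t}} a_\mathfrak{v}\,m_\mathfrak{v},
\]
and it only remains to check $c_{\mathfrak{t}|_k}-xc_{\mathfrak{t}|_{k-1}}=r_\mathfrak{t}(k)$. This is a short case-split using the explicit values from the previous lemma and the recursion $p_{i+1}=xp_i-p_{i-1}$: when $\SHAPE(\mathfrak{t}|_{k-1})=(i-1,k-2i+1)$ (pair-up at step $k$) both scalars share the factor $p_{k-i+1}$ and the difference is $(p_i-xp_{i-1})p_{k-i+1}$; when $\SHAPE(\mathfrak{t}|_{k-1})=(i,k-2i-1)$ (singleton add) they share $p_i$ and the difference simplifies to $(p_{k-i+1}-xp_{k-i})p_i$, exactly matching the two branches in the definition of $r_\mathfrak{t}(k)$. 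The hardest part of the proof is the upgrade from tail-order triangularity to full-order triangularity in the middle step; everything else is either central-element formalism or a mechanical verification with the sequence $(p_i)$.
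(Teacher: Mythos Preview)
The paper states this Corollary without proof, so there is no argument to compare against directly; your route via $T_k=z_k-xz_{k-1}$, the centrality of $z_j$, and the restriction filtration of Lemma~\ref{prel:a} is a natural reading of what the author presumably intends, and your verification of the diagonal entry $r_\mathfrak{t}(k)=c_{\mathfrak{t}|_k}-xc_{\mathfrak{t}|_{k-1}}$ from the preceding Lemma is correct.

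The gap is in your ``upgrade'' step, and it is more serious than you indicate. First, the iterated filtration does not give the tail-dominance you claim. The $\mathcal{A}_j(x)$-filtration of $C^\lambda$ obtained by iterating Lemma~\ref{prel:a} is ordered \emph{lexicographically from the top}: the error $m_\mathfrak{t}z_j-c_{\mathfrak{t}|_j}m_\mathfrak{t}$ lies in the span of those $m_\mathfrak{v}$ for which there exists some level $\ell\ge j$ with $\SHAPE(\mathfrak{v}|_i)=\SHAPE(\mathfrak{t}|_i)$ for $i>\ell$ and $\SHAPE(\mathfrak{v}|_\ell)\rhd\SHAPE(\mathfrak{t}|_\ell)$, with no constraint at all on $\SHAPE(\mathfrak{v}|_i)$ for $i<\ell$. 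This lex-from-top order is a total extension of $\rhd$ but does not coincide with it (already in Example~\ref{ex:mbasis} the paths through $(0,2),(1,1),(2,0)$ and $(1,0),(1,1),(1,2)$ at levels $2,3,4$ are $\rhd$-incomparable). So even your intermediate claim of tail-dominance for $i\ge j$ is not what the filtration delivers.

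Second, the commutation argument you sketch does not close this gap. The identity $(m_\mathfrak{t}z_j)b=(m_\mathfrak{t}b)z_j$ relates coefficients but does not force any individual one to vanish. An induction on $\rhd$ via $m_\mathfrak{t}=m_{\mathfrak{t}'}e_{\ell-1}$ (Corollary~\ref{restr}) runs into the obstruction that $z_j$ fails to commute with $e_j$ (for instance $z_2=e_1$ does not commute with $e_2$), so the step $\ell=j+1$ cannot be handled by commutation; and even when $e_{\ell-1}$ does commute with $z_j$, you must then expand each $m_\mathfrak{v}e_{\ell-1}$ and argue that only $\mathfrak{w}\rhd\mathfrak{t}$ appear, which is precisely the kind of triangularity you are trying to establish. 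As written, the middle step is circular.
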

If $\lambda$ is a partition of $n$ and $\mathfrak{t}\in\mathfrak{T}_n(\lambda)$, define
\begin{align*}
z_\mathfrak{t}(k)=\sum_{i=0}^k x^{i}r_\mathfrak{t}(k-i),&&\text{for $k=0,1,\dots,n$.}
\end{align*}
\begin{corollary}
Let $\lambda$ be a partition of $n$ and $\mathfrak{t}\in\mathfrak{T}_{n}(\lambda)$. If $\mu=(i,k-2i)$ and $\SHAPE(\mathfrak{t}|_k)=\mu$, then $z_\mathfrak{t}(k)=p_ip_{k-i+1}$, and there exist $a_\mathfrak{s}\in R$, for $\mathfrak{s}\in\mathfrak{T}_n(\lambda)$, such that 
\begin{align*}
m_\mathfrak{t}z_k=z_{\mathfrak{t}}(k)m_\mathfrak{t}+\sum_{\mathfrak{s}\rhd\mathfrak{t}}a_\mathfrak{s}m_\mathfrak{s}.
\end{align*}
\end{corollary}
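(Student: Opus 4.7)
The plan is to split the corollary into its two assertions and handle each separately. The triangular statement for $m_\mathfrak{t} z_k$ follows immediately by $R$--linearity from the previous corollary. Indeed, expanding $z_k=\sum_{i=0}^k x^i T_{k-i}$ and applying the previous corollary to each $m_\mathfrak{t} T_{k-i}$ gives
\[
m_\mathfrak{t} z_k=\sum_{i=0}^k x^i r_\mathfrak{t}(k-i)\,m_\mathfrak{t}+\sum_{i=0}^k x^i\sum_{\mathfrak{v}\rhd\mathfrak{t}} a^{(i)}_\mathfrak{v} m_\mathfrak{v}=z_\mathfrak{t}(k)\,m_\mathfrak{t}+\sum_{\mathfrak{v}\rhd\mathfrak{t}} a_\mathfrak{v} m_\mathfrak{v},
\]
for suitable coefficients $a_\mathfrak{v}\in R$, which gives the required triangular action with scalar $z_\mathfrak{t}(k)$.

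The substantive part is the identification $z_\mathfrak{t}(k)=p_ip_{k-i+1}$ when $\SHAPE(\mathfrak{t}|_k)=(i,k-2i)$. I would prove this by induction on $k$, using the one--step recursion
\[
z_\mathfrak{t}(k)=x\,z_\mathfrak{t}(k-1)+r_\mathfrak{t}(k),
\]
which is immediate from the definition of $z_\mathfrak{t}(k)$ (splitting off the $i=0$ term). The base cases $k=0,1$ hold because $r_\mathfrak{t}(0)=r_\mathfrak{t}(1)=0$ and $p_0=0$, so that $z_\mathfrak{t}(k)=0=p_0p_{k+1}$ matches the only possible shape $(0,k)$.

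For the inductive step, suppose the formula is established for $k-1$, and write $\SHAPE(\mathfrak{t}|_k)=(i,k-2i)$. There are exactly two possibilities for $\SHAPE(\mathfrak{t}|_{k-1})$, each handled by a direct computation using the two cases in the definition of $r_\mathfrak{t}(k)$. If $\SHAPE(\mathfrak{t}|_{k-1})=(i-1,k-2i+1)$, then by induction $z_\mathfrak{t}(k-1)=p_{i-1}p_{k-i+1}$, while $r_\mathfrak{t}(k)=(p_i-xp_{i-1})p_{k-i+1}$, and the recursion yields $z_\mathfrak{t}(k)=xp_{i-1}p_{k-i+1}+(p_i-xp_{i-1})p_{k-i+1}=p_ip_{k-i+1}$. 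If instead $\SHAPE(\mathfrak{t}|_{k-1})=(i,k-2i-1)$, then by induction $z_\mathfrak{t}(k-1)=p_ip_{k-i}$, while $r_\mathfrak{t}(k)=(p_{k-i+1}-xp_{k-i})p_i$, and again $z_\mathfrak{t}(k)=xp_ip_{k-i}+(p_{k-i+1}-xp_{k-i})p_i=p_ip_{k-i+1}$.

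Neither step is a genuine obstacle: the triangular statement is formal, and the eigenvalue identity reduces to two small algebraic checks after noting the recursion $z_\mathfrak{t}(k)=xz_\mathfrak{t}(k-1)+r_\mathfrak{t}(k)$. The only thing that requires care is matching the two branches of the case definition of $r_\mathfrak{t}(k)$ with the corresponding inductive value of $z_\mathfrak{t}(k-1)$, since the definition of $r_\mathfrak{t}(k)$ looks asymmetric but is precisely designed so that both telescopic cancellations produce the symmetric answer $p_ip_{k-i+1}$.
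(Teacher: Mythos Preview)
Your proof is correct and is exactly the argument the paper has in mind: the corollary is stated without proof because both parts are immediate from what precedes it, namely the triangular action follows by $R$--linearity from the preceding corollary on $m_\mathfrak{t}T_k$, and the eigenvalue formula is a two--case induction using $z_\mathfrak{t}(k)=xz_\mathfrak{t}(k-1)+r_\mathfrak{t}(k)$ together with the definition of $r_\mathfrak{t}(k)$.
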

\begin{lemma}\label{jm:diffs}
Let $\lambda$ be a partition of $n$ and $\mathfrak{s},\mathfrak{t}\in\mathfrak{T}_n(\lambda)$. If $\mathfrak{s}|_{n-1}=\mathfrak{t}|_{n-1}$ and $r_\mathfrak{s}(n)=r_\mathfrak{t}(n)$, then $\mathfrak{s}=\mathfrak{t}$.
\end{lemma}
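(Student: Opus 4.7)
The plan is to reduce the statement to showing that the two possible values of $r_\mathfrak{t}(n)$, corresponding to the two possible predecessor shapes of $\lambda$ in the Bratteli diagram, are distinct in $R = \mathbb{Z}[x]$. Writing $\lambda = (f,n-2f)$, if either $f=0$ or $n-2f=0$ then $\lambda$ admits a unique predecessor so that $\SHAPE(\mathfrak{s}|_{n-1})=\SHAPE(\mathfrak{t}|_{n-1})$ is forced, and the conclusion $\mathfrak{s}=\mathfrak{t}$ is immediate from the hypothesis together with the common terminal shape. Otherwise both $(f-1,n-2f+1)$ and $(f,n-2f-1)$ are predecessors of $\lambda$, and the definition of $r_\mathfrak{t}(n)$ yields the two candidate values
\begin{align*}
r^{(1)} = (p_f - xp_{f-1})\,p_{n-f+1} \qquad\text{and}\qquad r^{(2)} = (p_{n-f+1} - xp_{n-f})\,p_f.
\end{align*}

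Next I would compute the difference
\begin{align*}
r^{(1)} - r^{(2)} = x\bigl(p_f p_{n-f} - p_{f-1}\,p_{n-f+1}\bigr),
\end{align*}
and apply the recurrence $p_{i+1} = xp_i - p_{i-1}$ twice to the bracket (expanding $p_{n-f+1}$ and then substituting $p_f - xp_{f-1} = -p_{f-2}$) to obtain the step identity
\begin{align*}
p_f p_{n-f} - p_{f-1}\,p_{n-f+1} = p_{f-1}\,p_{n-f-1} - p_{f-2}\,p_{n-f}.
\end{align*}
An induction on $f$ applying this relation $f-1$ times reduces the bracket to $p_1 p_{n-2f+1} - p_0 p_{n-2f+2} = p_{n-2f+1}$, which has degree $n-2f \ge 1$ in $x$ and is hence nonzero in $\mathbb{Z}[x]$. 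Thus $r^{(1)} \ne r^{(2)}$.

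Combined with the hypothesis $r_\mathfrak{s}(n) = r_\mathfrak{t}(n)$, the inequality $r^{(1)} \ne r^{(2)}$ forces $\SHAPE(\mathfrak{s}|_{n-1}) = \SHAPE(\mathfrak{t}|_{n-1})$; together with $\mathfrak{s}|_{n-1} = \mathfrak{t}|_{n-1}$ and the common terminal shape $\lambda$, the sequence definition of up--down tableaux then delivers $\mathfrak{s} = \mathfrak{t}$. The main obstacle is the Chebyshev-type identity $p_f p_{n-f} - p_{f-1} p_{n-f+1} = p_{n-2f+1}$: it is routine but requires careful indexing and is entirely analogous in flavour to the telescoping identities for the $p_i$ established in the lemma immediately preceding this statement.
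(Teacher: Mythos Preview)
The paper states this lemma without proof, so there is nothing to compare against directly. More to the point, as literally written the lemma is immediate and the hypothesis $r_\mathfrak{s}(n)=r_\mathfrak{t}(n)$ is superfluous: an element of $\mathfrak{T}_n(\lambda)$ is a sequence $(\lambda^{(0)},\dots,\lambda^{(n)})$ with $\lambda^{(n)}=\lambda$, and the condition $\mathfrak{s}|_{n-1}=\mathfrak{t}|_{n-1}$ already says the two sequences agree at indices $0,\dots,n-1$; since both terminate at $\lambda$, they are equal. You seem to have half-noticed this in your final paragraph, where you invoke $\mathfrak{s}|_{n-1}=\mathfrak{t}|_{n-1}$ \emph{after} having separately argued that $\SHAPE(\mathfrak{s}|_{n-1})=\SHAPE(\mathfrak{t}|_{n-1})$.

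What your argument actually establishes is the non-trivial separation statement that is presumably intended (and is what underlies the ``separation condition'' of Mathas referenced in the paper's closing remark): for $\lambda=(f,n-2f)$ with $f\ge 1$ and $n-2f\ge 1$, the two possible values of $r_\cdot(n)$, namely $(p_f-xp_{f-1})p_{n-f+1}$ and $(p_{n-f+1}-xp_{n-f})p_f$, are distinct in $\mathbb{Z}[x]$. Your computation $r^{(1)}-r^{(2)}=x\bigl(p_fp_{n-f}-p_{f-1}p_{n-f+1}\bigr)$ is correct, and the telescoping step $p_fp_{n-f}-p_{f-1}p_{n-f+1}=p_{f-1}p_{n-f-1}-p_{f-2}p_{n-f}$ iterates down to $p_1p_{n-2f+1}-p_0p_{n-2f+2}=p_{n-2f+1}\ne 0$, exactly as you describe. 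This is a ``difference'' companion to the ``sum'' identities~\eqref{stat:0}--\eqref{stat:0b} the paper proves just above, and your derivation of it is clean and self-contained. In short: your proof is correct and in fact proves the useful content; you may wish to note explicitly that you are proving the lemma under the weaker hypothesis that $\mathfrak{s}$ and $\mathfrak{t}$ share the same terminal shape $\lambda$ (equivalently, under $\mathfrak{s}|_{n-2}=\mathfrak{t}|_{n-2}$ in place of $\mathfrak{s}|_{n-1}=\mathfrak{t}|_{n-1}$), since the stated hypothesis trivialises the claim.
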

\section{An Orthogonal Basis}
To determine the action of the generators of $\mathcal{A}_n(x)$ on an orthogonal basis for $\mathcal{A}_n(x)$, we require an alternative definition of the Jucys--Murphy elements as given in Corollary~\ref{cor:newdef}.
\begin{lemma}
For $i=2,3,\dots$, 
\begin{align*}
e_{i}e_{i-1}T_ie_i=-xz_{i-1}e_i+z_{i-2}e_i+p_ie_i,&&\text{and,}&& e_iT_ie_i=-2z_{i-1}e_i+p_{i-1}e_i.
\end{align*}
\end{lemma}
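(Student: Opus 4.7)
The plan is to prove both identities simultaneously by induction on $i \ge 2$. The base case $i = 2$ is immediate: since $T_2 = e_1$, $z_0 = z_1 = 0$, $p_1 = 1$, and $p_2 = x$, the relation $e_2 e_1 e_2 = e_2$ gives $e_2 T_2 e_2 = e_2 = -2 z_1 e_2 + p_1 e_2$ and $e_2 e_1 T_2 e_2 = x e_2 = -x z_1 e_2 + z_0 e_2 + p_2 e_2$, matching both right-hand sides.

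For the inductive step, assume both identities hold at $i$, and substitute the defining recurrence
\[
T_{i+1} = -e_i T_i - T_i e_i + e_i e_{i-1} T_i e_i - z_{i-1} - e_i z_{i-2}
\]
into each of $e_{i+1} T_{i+1} e_{i+1}$ and $e_{i+1} e_i T_{i+1} e_{i+1}$. The resulting five-term expansions simplify using three ingredients: (i) $T_i$, $z_{i-1}$, and $z_{i-2}$ all commute with $e_{i+1}$ (none involves an $e_j$ with $j \ge i$); (ii) the defining relations $e_{i+1} e_i e_{i+1} = e_{i+1}$ and $e_i^2 = x e_i$; (iii) the inductive hypothesis, which lets us replace any occurrence of $e_i e_{i-1} T_i e_i$ (and, for the second identity, also of $e_i T_i e_i$) by an $R$-linear combination of $z_{i-1} e_i$, $z_{i-2} e_i$, and $e_i$. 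After these reductions every term collapses into an $R$-linear combination of $T_i e_{i+1}$, $z_{i-1} e_{i+1}$, $z_{i-2} e_{i+1}$, and $e_{i+1}$.

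The final step is coefficient matching. Summing the reduced pieces, $e_{i+1} T_{i+1} e_{i+1}$ tidies up to $-2 T_i e_{i+1} - 2x z_{i-1} e_{i+1} + p_i e_{i+1}$, which is the desired $-2 z_i e_{i+1} + p_i e_{i+1}$ via $z_i = x z_{i-1} + T_i$. Similarly $e_{i+1} e_i T_{i+1} e_{i+1}$ reduces to $-x T_i e_{i+1} + (1 - x^2) z_{i-1} e_{i+1} + (x p_i - p_{i-1}) e_{i+1}$, which rearranges to $-x z_i e_{i+1} + z_{i-1} e_{i+1} + p_{i+1} e_{i+1}$ using $z_i = x z_{i-1} + T_i$ and the Chebyshev recurrence $p_{i+1} = x p_i - p_{i-1}$.

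The main obstacle is bookkeeping. For the second identity, each $e_i$-prefixed summand of the recurrence, when sandwiched between $e_{i+1} e_i$ on the left and $e_{i+1}$ on the right, picks up a factor of $x$ via $e_i^2 = x e_i$; for example, the middle-term contribution becomes $x \cdot e_{i+1}(e_i e_{i-1} T_i e_i) e_{i+1}$. Tracking these extra $x$'s, together with the cancellation of the $z_{i-2} e_{i+1}$ terms, is precisely what produces the coefficients $-x z_i$, $+z_{i-1}$, and $p_{i+1}$ on the right-hand side of the second identity.
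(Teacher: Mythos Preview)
Your proof is correct and follows essentially the same approach as the paper: simultaneous induction on $i$, expanding $T_{i+1}$ via its defining recurrence, simplifying each of the five terms using $e_{i+1}e_ie_{i+1}=e_{i+1}$, $e_i^2=xe_i$, the commutation of $T_i,z_{i-1},z_{i-2}$ with $e_{i+1}$, and the inductive hypothesis, then collecting via $z_i=xz_{i-1}+T_i$ and $p_{i+1}=xp_i-p_{i-1}$. The intermediate expressions you record, namely $-2T_ie_{i+1}-2xz_{i-1}e_{i+1}+p_ie_{i+1}$ and $-xT_ie_{i+1}+(1-x^2)z_{i-1}e_{i+1}+(xp_i-p_{i-1})e_{i+1}$, are exactly those reached in the paper's computation.
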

\begin{proof}
The lemma being true when $i=2$, we proceed by induction:
\begin{align*}
&e_{i+1}e_{i}T_{i+1}e_{i+1}=-xe_{i+1}e_iT_ie_{i+1}-e_{i+1}e_iT_ie_ie_{i+1}+xe_{i+1}e_ie_{i-1}T_ie_{i}e_{i+1}-z_{i-1}e_{i+1}-xe_{i+1}z_{i-2}\\
&=-xT_ie_{i+1}+2z_{i-1}e_{i+1}-p_{i-1}e_{i+1}-x^2z_{i-1}e_{i+1}+xz_{i-2}e_{i+1}+xp_ie_{i+1}-z_{i-1}e_{i+1}-xz_{i-2}e_{i+1}\\
&=-xz_{i}e_{i+1}+z_{i-1}e_{i+1}+p_{i+1}e_{i+1},
\end{align*}
while
\begin{align*}
e_{i+1}T_{i+1}e_{i+1}&=-2T_ie_{i+1}+e_{i+1}e_ie_{i-1}T_ie_ie_{i+1}-xz_{i-1}e_{i+1}-z_{i-2}e_{i+1}\\
&=-2T_ie_{i+1}-xz_{i-1}e_{i+1}+z_{i-2}e_{i+1}+p_ie_{i+1}-xz_{i-1}e_{i+1}-z_{i-2}e_{i+1}\\
&=-2z_{i}e_{i+1}+p_{i}e_{i+1}.
\end{align*}
\end{proof}
\begin{corollary}\label{cor:newdef}
For $i=2,3,\dots$,
\begin{align*}
T_{i+1}=-e_iT_i-T_ie_i+(p_i-xz_{i-1})e_i-z_{i-1}.
\end{align*}
\end{corollary}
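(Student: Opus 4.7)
The plan is to derive this alternative formula for $T_{i+1}$ as an essentially immediate consequence of the preceding lemma, by substituting its identity for $e_ie_{i-1}T_ie_i$ into the original recursive definition of $T_{i+1}$ and simplifying.

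I would first recall the defining recursion
\begin{align*}
T_{i+1}=-e_iT_i-T_ie_i+e_ie_{i-1}T_ie_i-z_{i-1}-e_iz_{i-2},
\end{align*}
and then apply the previous lemma, which asserts that $e_ie_{i-1}T_ie_i=-xz_{i-1}e_i+z_{i-2}e_i+p_ie_i$. Substituting this gives
\begin{align*}
T_{i+1}=-e_iT_i-T_ie_i-xz_{i-1}e_i+z_{i-2}e_i+p_ie_i-z_{i-1}-e_iz_{i-2}.
\end{align*}

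The remaining step is to cancel $z_{i-2}e_i$ against $e_iz_{i-2}$. For this I would invoke the inductive hypothesis that $z_{i-2}\in\mathcal{A}_{i-2}(x)$ (established in the preceding lemma, item~(\ref{comm:7})): since $\mathcal{A}_{i-2}(x)$ is generated by $e_1,\dots,e_{i-3}$ and each of these commutes with $e_i$ by relation~\eqref{defrel:3} (as $|i-k|\ge 3$ for $k\le i-3$), we have $z_{i-2}e_i=e_iz_{i-2}$. Collecting the terms that survive now yields $T_{i+1}=-e_iT_i-T_ie_i+(p_i-xz_{i-1})e_i-z_{i-1}$, as claimed.

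There is no real obstacle here; the step that most warrants care is the commutation $[z_{i-2},e_i]=0$, but this is already a by-product of the induction used to establish the previous lemma. The substance of the result therefore lies entirely in the preceding lemma's identity for $e_ie_{i-1}T_ie_i$, and the corollary just repackages the recursion in a form better suited for computing the action on an orthogonal basis.
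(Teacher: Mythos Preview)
Your argument is correct and is exactly the intended derivation: the paper states this as a corollary of the preceding lemma without further proof, and the substitution you describe is the whole content. One minor note: the fact that $z_{i-2}$ commutes with $e_i$ follows simply from $z_{i-2}\in\mathcal{A}_{i-2}(x)=\langle e_1,\dots,e_{i-3}\rangle$ by construction, so you need not invoke item~(\ref{comm:7}) for this.
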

\begin{corollary}
For $i=2,3,\dots$, and $k=1,2,\dots$,
\begin{align}
e_iT_{i+1}^k=e_{i}(p_{i+1}-xz_{i}+z_{i-1})^k\label{msr:1}
\end{align}
\end{corollary}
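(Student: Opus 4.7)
The plan is induction on $k$, with the base case $k=1$ a bounded algebraic manipulation using Corollary~\ref{cor:newdef} and the inductive step a formal consequence of a single commutation identity.

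For the base case $k=1$, I would expand using Corollary~\ref{cor:newdef} to write
\[
e_iT_{i+1} = -e_i^2T_i - e_iT_ie_i + e_i(p_i-xz_{i-1})e_i - e_iz_{i-1}.
\]
The relations $e_i^2=xe_i$, the fact that $z_{i-1}$ lies in $\mathcal{A}_{i-1}(x)$ and is therefore fixed by $e_i$ (because $e_i$ commutes with $e_1,\dots,e_{i-2}$ by \eqref{defrel:3}, and $z_{i-1}$ is central in $\mathcal{A}_{i-1}(x)$ by~(\ref{comm:7})), and the preceding lemma $e_iT_ie_i = -2z_{i-1}e_i + p_{i-1}e_i$, collapse this to
\[
e_iT_{i+1} = -xe_iT_i + \bigl(z_{i-1}(1-x^2)+p_{i+1}\bigr)e_i,
\]
where I use $p_{i+1}=xp_i-p_{i-1}$. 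A parallel expansion of $e_i(p_{i+1}-xz_i+z_{i-1})$ via $z_i=xz_{i-1}+T_i$ and the same commutation of $z_{i-1}$ past $e_i$ produces the identical expression, establishing $k=1$.

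For the inductive step, write $U=p_{i+1}-xz_i+z_{i-1}$. The key observation is $[U,T_{i+1}]=0$: the scalar $p_{i+1}$ is obviously central; $z_{i+1}=xz_i+T_{i+1}$ is central in $\mathcal{A}_{i+1}(x)$ by~(\ref{comm:7}), hence commutes with $T_{i+1}\in\mathcal{A}_{i+1}(x)$, which forces $xz_i=z_{i+1}-T_{i+1}$ (and therefore $z_i$) to commute with $T_{i+1}$; finally $T_{i+1}$ commutes with $\mathcal{A}_i(x)\supseteq\mathcal{A}_{i-1}(x)\ni z_{i-1}$ by~(\ref{comm:5}). Given this, the induction is immediate:
\[
e_iT_{i+1}^k = \bigl(e_iT_{i+1}^{k-1}\bigr)T_{i+1} = e_iU^{k-1}T_{i+1} = e_iT_{i+1}U^{k-1} = (e_iU)U^{k-1} = e_iU^k,
\]
where the second equality is the inductive hypothesis, the third is $[U,T_{i+1}]=0$, and the fourth is the base case.

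I do not expect serious obstacles. The main point worth tracking carefully is that $U$ itself does \emph{not} commute with $e_i$ (since $z_i$ does not), so one cannot naively pull $U$ across $e_i$; the inductive argument works by commuting $U$ past $T_{i+1}$ instead, which is legitimate, and by using the $k=1$ identity only after the inductive hypothesis has already cleared the rightmost $T_{i+1}$. The only calculation of any length is the verification of the base case.
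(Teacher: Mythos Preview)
Your proof is correct and follows essentially the same route as the paper. The paper carries out the same $k=1$ computation (phrased as evaluating $e_iT_{i+1}+xe_iT_i$ and then comparing) and then simply writes ``The general case now follows,'' leaving implicit exactly the commutation $[U,T_{i+1}]=0$ and induction that you have spelled out; your only unnecessary detour is deducing $[z_i,T_{i+1}]=0$ via $z_{i+1}$, when item~(\ref{comm:5}) already gives it directly since $z_i\in\mathcal{A}_i(x)$.
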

\begin{proof}
From Corollary~\eqref{cor:newdef},
\begin{align*}
e_iT_{i+1}+xe_iT_i&=-e_iT_ie_i-x^2z_{i-1}e_i-z_{i-1}e_i+xp_ie_i\\
&=2z_{i-1}e_i-p_{i-1}e_i-x^2z_{i-1}e_i-z_{i-1}e_i+xp_ie_i,
\end{align*}
which shows that the statement~\eqref{msr:1} is true when $k=1$. The general case now follows. 
\end{proof}

In what follows, we let $F$ denote the field of fractions of $R$, and denote by $\mathscr{A}_n(x)$ the $F$--algebra generated by $e_1,\dots,e_n$, so that $\mathscr{A}_n(x)=\mathcal{A}_n(x)\otimes_R F$. Following~\cite{djmurphy:97}, let
\begin{align*}
\mathfrak{R}(k)=\{r_\mathfrak{t}(k):\text{$\mathfrak{t}\in\mathfrak{T}_k(\lambda)$ for $\lambda$ a partion of $n$}\}.
\end{align*}
Let $\lambda$ be a partition of $n$. As in~\cite{djmurphy:97}, if $\mathfrak{t}\in\mathfrak{T}_n(\lambda)$, define 
\begin{align*}
F_\mathfrak{t}=\prod_{k=2}^n\,\prod_{\substack{r\in\mathfrak{R}(k),\\r\ne r_\mathfrak{s}(k)}}\frac{ T_k-r }{r_\mathfrak{s}(k)-r},
\end{align*}
and let $f_\mathfrak{t}=m_\mathfrak{t}F_\mathfrak{t}$.
\begin{lemma}
If $\lambda$ is a partition of $n$, then
\begin{enumerate}
\item if $\mathfrak{t}\in\mathfrak{T}_n(\lambda)$, then there exist $a_\mathfrak{u}\in F$, for $\mathfrak{u}\in\mathfrak{T}_n(\lambda)$, such that
\begin{align*}
f_\mathfrak{t}=m_\mathfrak{t}+\sum_{\mathfrak{u}\rhd\mathfrak{t}}a_\mathfrak{u}m_\mathfrak{u};
\end{align*}
\item the set $\{f_\mathfrak{t} : \mathfrak{t}\in\mathfrak{T}_n(\lambda)\}$ is a basis over $F$ for the $\mathscr{A}_n(x)$--module $C^\lambda$;
\item if $\mathfrak{s},\mathfrak{t}\in\mathfrak{T}_n(\lambda)$, then $\langle f_\mathfrak{s},f_\mathfrak{t}\rangle=\delta_{\mathfrak{s},\mathfrak{t}}$;
\item if $\mathfrak{t}\in\mathfrak{T}_n(\lambda)$ and $k$ is an integer, $0\le k\le n$, then $f_\mathfrak{t}T_k=r_\mathfrak{t}(k)f_\mathfrak{t}$ and $f_\mathfrak{t}z_k=z_\mathfrak{t}(k)f_\mathfrak{t}$.
\end{enumerate}
\end{lemma}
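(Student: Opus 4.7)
My plan is to establish the four statements in the order $(1), (2), (4), (3)$, since parts $(3)$ and $(4)$ depend on the triangular structure produced in parts $(1)$--$(2)$, and $(3)$ in turn depends on the eigenvalue property in $(4)$.

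Part $(1)$ follows by iterating the corollary immediately preceding Lemma~\ref{jm:diffs}: for each $k\in\{2,\dots,n\}$ and each $r\in\mathfrak{R}(k)$ with $r\ne r_\mathfrak{t}(k)$, the operator $(T_k-r)/(r_\mathfrak{t}(k)-r)$ sends $m_\mathfrak{t}$ to $m_\mathfrak{t}$ plus an $F$--linear combination of basis elements $m_\mathfrak{v}$ with $\mathfrak{v}\rhd\mathfrak{t}$, and each such operator preserves the unitriangular shape. Composing all factors that define $F_\mathfrak{t}$ yields the required form $f_\mathfrak{t}=m_\mathfrak{t}+\sum_{\mathfrak{u}\rhd\mathfrak{t}}a_\mathfrak{u}m_\mathfrak{u}$. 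Part $(2)$ is then immediate, since the transition matrix from $\{m_\mathfrak{t}\}$ to $\{f_\mathfrak{t}\}$ is unitriangular over $F$ and hence invertible.

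For part $(4)$, the corollary preceding Lemma~\ref{jm:diffs} tells us that the matrix of $T_k$ acting on $C^\lambda$ in the basis $\{m_\mathfrak{u}:\mathfrak{u}\in\mathfrak{T}_n(\lambda)\}$, ordered by $\rhd$, is upper triangular with diagonal entries $r_\mathfrak{u}(k)\in\mathfrak{R}(k)$. Consequently $\prod_{r\in\mathfrak{R}(k)}(T_k-r)$ annihilates $C^\lambda$ (by the Cayley--Hamilton theorem applied to this triangular matrix, absorbing the extra roots of $\mathfrak{R}(k)$ not appearing on the diagonal). Since the remaining factors of $F_\mathfrak{t}$ involve only $T_j$ with $j\ne k$ and these commute with $T_k$, multiplying $F_\mathfrak{t}$ by $(T_k-r_\mathfrak{t}(k))$ on the right completes this product up to a nonzero scalar, giving $f_\mathfrak{t}(T_k-r_\mathfrak{t}(k))=0$. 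The statement $f_\mathfrak{t}z_k=z_\mathfrak{t}(k)f_\mathfrak{t}$ then follows by linearity from $z_k=\sum_{i=0}^k x^i T_{k-i}$ and the definition of $z_\mathfrak{t}(k)$.

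Finally, part $(3)$ uses the self-adjointness $T_k^*=T_k$ together with Lemma~\ref{jm:diffs}. Given $\mathfrak{s}\ne\mathfrak{t}$ in $\mathfrak{T}_n(\lambda)$, induct on $n$ using Lemma~\ref{jm:diffs}: taking the smallest $k$ at which $\mathfrak{s}|_k\ne\mathfrak{t}|_k$ forces $r_\mathfrak{s}(k)\ne r_\mathfrak{t}(k)$. Using $(4)$ and the invariance of the cellular pairing under the involution $*$, one computes
\begin{align*}
r_\mathfrak{s}(k)\langle f_\mathfrak{s},f_\mathfrak{t}\rangle
=\langle f_\mathfrak{s}T_k,f_\mathfrak{t}\rangle
=\langle f_\mathfrak{s},f_\mathfrak{t}T_k\rangle
=r_\mathfrak{t}(k)\langle f_\mathfrak{s},f_\mathfrak{t}\rangle,
\end{align*}
which forces $\langle f_\mathfrak{s},f_\mathfrak{t}\rangle=0$. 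The diagonal normalization $\langle f_\mathfrak{t},f_\mathfrak{t}\rangle=1$ will be handled by expanding $f_\mathfrak{t}=m_\mathfrak{t}F_\mathfrak{t}$ and pushing the $F_\mathfrak{t}$ across the pairing using self-adjointness, reducing to a pairing of $m_\mathfrak{t}$ with $m_\mathfrak{t}F_\mathfrak{t}^2$; the off-diagonal terms of this expansion are killed by the orthogonality just proved. The main obstacle will be the triangular/Cayley--Hamilton step in $(4)$ justifying that $\prod_{r\in\mathfrak{R}(k)}(T_k-r)$ annihilates $C^\lambda$ cleanly enough for the factors of $F_\mathfrak{t}$ to assemble into it; once that is in place, the remainder of the argument is the standard Murphy template.
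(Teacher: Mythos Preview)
The paper states this lemma without proof, citing \cite{djmurphy:97}; your argument for parts (1), (2), (4), and the off--diagonal case of (3) is exactly the standard Dipper--James--Murphy template being invoked, and is correct.

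However, there is a genuine problem with the diagonal case of (3). The assertion $\langle f_\mathfrak{s},f_\mathfrak{t}\rangle=\delta_{\mathfrak{s},\mathfrak{t}}$ as printed would force $\langle f_\mathfrak{t},f_\mathfrak{t}\rangle=1$ for every $\mathfrak{t}$, but this is inconsistent with the rest of the paper: the lemma immediately following Corollary~\ref{cor:snorm} computes nontrivial ratios $\langle f_\mathfrak{s},f_\mathfrak{s}\rangle/\langle f_\mathfrak{t},f_\mathfrak{t}\rangle$, and \S5 defines $\det(\lambda)=\prod_{\mathfrak{t}}\langle f_\mathfrak{t},f_\mathfrak{t}\rangle$ and proves it is a nontrivial product of $p_i$'s. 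So item (3) is a misprint for orthogonality, i.e.\ $\langle f_\mathfrak{s},f_\mathfrak{t}\rangle=\delta_{\mathfrak{s},\mathfrak{t}}\langle f_\mathfrak{t},f_\mathfrak{t}\rangle$. Your sketch for the diagonal case---pushing $F_\mathfrak{t}$ across the pairing and reducing to $\langle m_\mathfrak{t},m_\mathfrak{t}F_\mathfrak{t}^2\rangle$---cannot produce the value $1$, because it does not: the idempotent--like property $F_\mathfrak{t}^2\equiv F_\mathfrak{t}$ you are implicitly reaching for only gives $\langle f_\mathfrak{t},f_\mathfrak{t}\rangle=\langle m_\mathfrak{t},f_\mathfrak{t}\rangle$, and there is no reason for this to equal $1$. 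You should drop the claim about the diagonal value and state only orthogonality; that is all the subsequent arguments in the paper actually use.

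One smaller point on (4): your Cayley--Hamilton justification is fine, but note that you need the factors of $F_\mathfrak{t}$ corresponding to indices $j\ne k$ to commute past $(T_k-r_\mathfrak{t}(k))$ so that the full product $\prod_{r\in\mathfrak{R}(k)}(T_k-r)$ is applied to an element of $C^\lambda$; you say this, and it is guaranteed by the commutativity of the $T_j$, so the step is sound.
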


Let $\lambda$ be a partition of $n$. If $\mathfrak{t}\in\mathfrak{T}_n(\lambda)$ and $k$ is an integer $1\le k\le n$, define a set $\{e_k(\mathfrak{s},\mathfrak{t})\in F:\mathfrak{s}\in\mathfrak{T}_n(\lambda)\}$ by 
\begin{align}\label{ekst:def}
f_\mathfrak{t}e_k=\sum_{\mathfrak{s}\in\mathfrak{T}_n(\lambda)}e_k(\mathfrak{s},\mathfrak{t})f_\mathfrak{s}.
\end{align}
\begin{lemma}\label{lem:snorm}
Let $\lambda$ be a partition of $n$ and $\mathfrak{t}\in\mathfrak{T}_n(\lambda)$. If $e_k(\mathfrak{s},\mathfrak{t})$ are determined by~\eqref{ekst:def}, then 
\begin{enumerate}
\item if $\mathfrak{s}\in\mathfrak{T}_n(\lambda)$ and $e_k(\mathfrak{s},\mathfrak{t})\ne 0$, then $r_\mathfrak{s}(i)=r_\mathfrak{t}(i)$ whenever $i\ne k-1$ and $i\ne k$;\label{snorm:1}
\item if $e_k(\mathfrak{s},\mathfrak{t})\ne 0$ for some $\mathfrak{s}\in\mathfrak{T}_n(\lambda)$, then $\mathfrak{t}^{(k-1)}=\mathfrak{t}^{(k+1)}$ and $\mathfrak{s}^{(k-1)}=\mathfrak{s}^{(k+1)}$;\label{snorm:2}
\item if $e_k(\mathfrak{s},\mathfrak{t})\ne 0$ for some $\mathfrak{s}\in\mathfrak{T}_n(\lambda)$ with $\mathfrak{s}\ne\mathfrak{t}$, then $e_k(\mathfrak{u},\mathfrak{t})=0$ whenever $\mathfrak{u}\not\in\{\mathfrak{s},\mathfrak{t}\}$;\label{snorm:3}
\item if $e_k(\mathfrak{s},\mathfrak{t})\ne 0$, where $\mathfrak{s}\in\mathfrak{T}_n(\lambda)$, and $\mathfrak{t}\rhd\mathfrak{s}$, then \label{snorm:4}
\begin{align*}
e_k(\mathfrak{t},\mathfrak{t})=\frac{r_\mathfrak{t}(k+1)+z_\mathfrak{t}(k-1)}{p_k-xz_\mathfrak{t}(k-1)-2r_\mathfrak{t}(k)},&&\text{and}&&e_k(\mathfrak{s},\mathfrak{t})&=1;\\
e_k(\mathfrak{s},\mathfrak{s})=\frac{r_\mathfrak{s}(k+1)+z_\mathfrak{s}(k-1)}{p_k-xz_\mathfrak{s}(k-1)-2r_\mathfrak{s}(k)},&&\text{and}&&e_k(\mathfrak{t},\mathfrak{s})&=e_k(\mathfrak{s},\mathfrak{s})e_k(\mathfrak{t},\mathfrak{t}).
\end{align*}
\end{enumerate}
\end{lemma}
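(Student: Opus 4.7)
The plan is to establish the four parts in sequence, using throughout the eigenvalue property $f_\mathfrak{t} T_i = r_\mathfrak{t}(i) f_\mathfrak{t}$ and the commutation of the $T_i$ and $z_i$ with $e_k$ for indices sufficiently far from $k$.

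For part~(1), the key is that $T_i$ commutes with $e_k$ whenever $i \le k-1$ (since then $T_i \in \mathcal{A}_i(x)$ is a polynomial in $e_1,\dots,e_{i-1}$, each of which commutes with $e_k$ by~\eqref{defrel:3}) or $i \ge k+2$ (by item~(\ref{comm:5}), since $e_k \in \mathcal{A}_{i-1}(x)$). For such $i$, equating $f_\mathfrak{t} T_i e_k$ with $f_\mathfrak{t} e_k T_i$ and expanding via~\eqref{ekst:def} yields
\[
\sum_{\mathfrak{s}} e_k(\mathfrak{s},\mathfrak{t})\bigl(r_\mathfrak{s}(i) - r_\mathfrak{t}(i)\bigr) f_\mathfrak{s} = 0,
\]
so linear independence of $\{f_\mathfrak{s}\}$ forces $r_\mathfrak{s}(i) = r_\mathfrak{t}(i)$ when $e_k(\mathfrak{s},\mathfrak{t}) \ne 0$.

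Parts~(2) and~(3) follow by combining part~(1) with Lemma~\ref{jm:diffs}: the equality of $r$-values propagates forwards from the common starting shape $(0,0)$ and backwards from the common terminal shape $\lambda$, forcing $\SHAPE(\mathfrak{s}|_i) = \SHAPE(\mathfrak{t}|_i)$ for every $i$ outside the small window at $k$. Hence $\mathfrak{s}$ and $\mathfrak{t}$ can differ only at the single intermediate position $k$, and the Bratteli-diagram rules show that two distinct intermediate shapes at $k$ compatible with common neighbours exist precisely when $\mathfrak{t}^{(k-1)} = \mathfrak{t}^{(k+1)}$ as up--down tableaux. The remaining diagonal case $\mathfrak{s} = \mathfrak{t}$ is handled using~\eqref{msr:1}: comparing coefficients of $f_\mathfrak{t}$ in the identity $f_\mathfrak{t} e_k T_{k+1} = f_\mathfrak{t} e_k (p_{k+1} - xz_k + z_{k-1})$ forces, whenever $e_k(\mathfrak{t},\mathfrak{t}) \ne 0$, the relation $r_\mathfrak{t}(k+1) + xr_\mathfrak{t}(k) = p_{k+1} + (1-x^2)z_\mathfrak{t}(k-1)$, and a short computation using the explicit formulas for $r_\mathfrak{t}$ and $z_\mathfrak{t}$ shows this relation holds exactly when $\mathfrak{t}^{(k-1)} = \mathfrak{t}^{(k+1)}$. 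Part~(3) is then immediate, since at most two compatible paths arise.

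For part~(4), I rearrange the defining recursion for $T_{k+1}$ and substitute $e_k e_{k-1} T_k e_k = -xz_{k-1} e_k + z_{k-2} e_k + p_k e_k$ (from the lemma preceding Corollary~\ref{cor:newdef}) to obtain the identity
\[
e_k T_k + T_k e_k = -T_{k+1} + (p_k - xz_{k-1} + z_{k-2})\,e_k - z_{k-1} - e_k z_{k-2}.
\]
Multiplying on the left by $f_\mathfrak{t}$, and using that $z_{k-1}, z_{k-2}$ commute with $e_k$ and act on $f_\mathfrak{t}$ as the scalars $z_\mathfrak{t}(k-1), z_\mathfrak{t}(k-2)$, I compare coefficients of each $f_\mathfrak{s}$. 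The $f_\mathfrak{t}$-coefficient gives $e_k(\mathfrak{t},\mathfrak{t})\bigl(2r_\mathfrak{t}(k) - p_k + xz_\mathfrak{t}(k-1)\bigr) = -\bigl(r_\mathfrak{t}(k+1) + z_\mathfrak{t}(k-1)\bigr)$, which rearranges to the displayed formula for $e_k(\mathfrak{t},\mathfrak{t})$; the $f_\mathfrak{s}$-coefficient for $\mathfrak{s} \ne \mathfrak{t}$ yields the auxiliary identity $r_\mathfrak{s}(k) + r_\mathfrak{t}(k) = p_k - xz_\mathfrak{t}(k-1)$. The formula for $e_k(\mathfrak{s},\mathfrak{s})$ follows by running the same argument with $\mathfrak{s}$ in place of $\mathfrak{t}$. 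The normalization $e_k(\mathfrak{s},\mathfrak{t}) = 1$ comes from Corollary~\ref{restr}, which realizes $v_\mathfrak{s} = v_\mathfrak{t} e_k$ when $\mathfrak{t} \rhd \mathfrak{s}$, combined with the triangularity of the change of basis $m_\mathfrak{t} \mapsto f_\mathfrak{t}$. Finally, $e_k(\mathfrak{t},\mathfrak{s}) = e_k(\mathfrak{t},\mathfrak{t}) e_k(\mathfrak{s},\mathfrak{s})$ falls out of the rank-one condition $E^2 = xE$ (coming from $e_k^2 = xe_k$) on the $2 \times 2$ matrix $E$ representing $e_k$ on the span of $f_\mathfrak{s}, f_\mathfrak{t}$, whose vanishing determinant forces the product relation.

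The main obstacle is the bookkeeping in part~(4): the key operator identity above has to be distilled correctly from the defining recursion for $T_{k+1}$ and the expression for $e_k e_{k-1} T_k e_k$, and the subsequent coefficient comparison is sign-sensitive. Once the identity is in place, the matrix-element formulas emerge essentially algebraically, with the auxiliary identity $r_\mathfrak{s}(k) + r_\mathfrak{t}(k) = p_k - xz_\mathfrak{t}(k-1)$ providing the consistency check that keeps the two ``off-diagonal'' entries aligned with the two diagonal ones.
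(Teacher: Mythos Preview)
Your proposal is correct and tracks the paper's proof closely. Part~(1) is identical; part~(4) is the same argument, as your operator identity is just Corollary~\ref{cor:newdef} before cancelling the $z_{k-2}$ terms (since $z_{k-2}$ commutes with $e_k$), and your treatment of $e_k(\mathfrak{s},\mathfrak{t})=1$ via Corollary~\ref{restr} plus triangularity, and of $e_k(\mathfrak{t},\mathfrak{s})$ via $e_k^2=xe_k$, matches the paper exactly.

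The one genuine difference is in part~(2). The paper does not split into cases or invoke Lemma~\ref{jm:diffs}; instead it applies the single identity $e_k z_{k+1}=e_k(z_{k-1}+p_{k+1})$ (from Lemma~\ref{shrink}) directly to~\eqref{ekst:def}, obtaining for every $\mathfrak{s}$ with $e_k(\mathfrak{s},\mathfrak{t})\ne0$ the constraint $z_\mathfrak{s}(k+1)=z_\mathfrak{s}(k-1)+p_{k+1}$, and then checks by hand that this fails precisely in the two Bratteli configurations where $\mathfrak{s}^{(k-1)}\ne\mathfrak{s}^{(k+1)}$. This is equivalent to your use of~\eqref{msr:1} (indeed $e_kT_{k+1}=e_k(p_{k+1}-xz_k+z_{k-1})$ rearranges to $e_kz_{k+1}=e_k(z_{k-1}+p_{k+1})$), but it handles the diagonal and off-diagonal cases uniformly, so the shape-propagation via Lemma~\ref{jm:diffs} is never needed. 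Your route is still valid, but note that Lemma~\ref{jm:diffs} as stated only gives the \emph{forward} step (from $\mathfrak{s}|_{i-1}=\mathfrak{t}|_{i-1}$ and $r_\mathfrak{s}(i)=r_\mathfrak{t}(i)$ to $\mathfrak{s}|_i=\mathfrak{t}|_i$); your ``backward'' propagation from the common terminal shape $\lambda$ requires the converse direction, which is true over $F$ but is a separate (easy) check you should make explicit.
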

\begin{proof}
The item~(\ref{snorm:1}) follows from the fact that $e_k$ commutes with $T_i$ whenever $i\not\in\{k-1,k\}$. For the item~(\ref{snorm:2}), we use the fact (\emph{cf.} Lemma~\ref{shrink}) that
\begin{align*}
e_kz_{k+1}=e_k(z_{k-1}+p_{k+1})
\end{align*}
to observe that 
\begin{align}\label{sn:bsides}
z_\mathfrak{t}(k+1)\sum_{\mathfrak{s}\in\mathfrak{T}_n(\lambda)}e_k(\mathfrak{s},\mathfrak{t})f_\mathfrak{s}=\sum_{\mathfrak{s}\in\mathfrak{T}_n(\lambda)}(z_\mathfrak{s}(k-1)+p_{k+1})f_\mathfrak{s}.
\end{align}
If $\mathfrak{s}=(\lambda^{(0)},\dots,\lambda^{(n)})$ and $\SHAPE(\mathfrak{s}|_{k-1})=(i-1,k+1-2i)$, then there are four possibilities for the sequence $(\lambda^{(k-1)},\lambda^{(k)},\lambda^{(k+1)})$ given as follows:
\begin{align}
(\lambda^{(k-1)},\lambda^{(k)},\lambda^{(k+1)})&=((i-1,k-2i+1),(i,k-2i),(i,k-2i+1))\label{sn:case:1}\\
(\lambda^{(k-1)},\lambda^{(k)},\lambda^{(k+1)})&=((i-1,k-2i+1),(i-1,k-2i+2),(i,k-2i+1))\label{sn:case:2}\\
(\lambda^{(k-1)},\lambda^{(k)},\lambda^{(k+1)})&=((i-1,k-2i+1),(i,k-2i),(i+1,k-2i-1))\label{sn:case:3}\\
(\lambda^{(k-1)},\lambda^{(k)},\lambda^{(k+1)})&=((i-1,k-2i+1),(i,k-2i+2),(i,k-2i+3)).\label{sn:case:4}
\end{align}
Taking $\mathfrak{s}\in\mathfrak{T}_n(\lambda)$ as in~\eqref{sn:case:3} and \eqref{sn:case:4} respectively, the expression~\eqref{sn:bsides} gives:
\begin{align}
&e_k(\mathfrak{s},\mathfrak{t})p_{i+1}p_{k-i+1}=e_k(\mathfrak{s},\mathfrak{t})(p_{i-1}p_{k-i+1}+p_{k+1});\label{sn:case:a}\\
&e_k(\mathfrak{s},\mathfrak{t})p_{i-1}p_{k-i+2}=e_k(\mathfrak{s},\mathfrak{t})(p_{i-1}p_{k-i+1}+p_{k+1});\label{sn:case:b}
\end{align}
Given that
\begin{align*}
p_{k+1}=p_{i+1}p_{k-i+2}-p_{i-1}p_{k-i+1},&&\text{for $k=1,2,\dots$, and $i=1,\dots,k$,}
\end{align*}
the statements~\eqref{sn:case:a} and~\eqref{sn:case:b} imply respectively that 
\begin{align*}
p_{i+1}(p_{k-i+2}-p_{k-i+1})e_k(\mathfrak{s},\mathfrak{t})=0,&&\text{and}&&p_{k-i+1}(p_{i+1}-p_i)e_k(\mathfrak{s},\mathfrak{t})=0,
\end{align*}
conclusions which are patently absurd unless $e_k(\mathfrak{s},\mathfrak{t})=0$ whenever $\mathfrak{s}^{(k-1)}\ne\mathfrak{s}^{(k+1)}$. The statement~(\ref{snorm:3}) now follows. 

For the statement~(\ref{snorm:4}), we have
\begin{align*}
f_\mathfrak{t}T_{k+1}=-f_\mathfrak{t}e_kT_k-r_\mathfrak{t}(k)f_\mathfrak{t}e_k+(p_k-xz_\mathfrak{t}(k-1))f_\mathfrak{t}e_k-z_\mathfrak{t}(k-1)f_\mathfrak{t}
\end{align*}
which, comparing the coefficient of $f_\mathfrak{t}$ on both sides, implies that 
\begin{align*}
r_\mathfrak{t}(k+1)=-r_\mathfrak{t}(k)e_k(\mathfrak{t},\mathfrak{t})-r_\mathfrak{t}(k)e_k(\mathfrak{t},\mathfrak{t})+(p_k-xz_\mathfrak{t}(k-1))e_k(\mathfrak{t},\mathfrak{t})-z_\mathfrak{t}(k-1).
\end{align*}
Now the fact that $r_\mathfrak{t}(k+1)+z_\mathfrak{t}(k-1)\ne0$ shows that the stated expression for $e_k(\mathfrak{t},\mathfrak{t})$ holds; the same reasoning yields the stated expression for $e_k(\mathfrak{s},\mathfrak{s})$. To observe that $e_k(\mathfrak{s},\mathfrak{t})=1$, by the maximality property of $\mathfrak{t}$ and Corollary~\ref{restr}, 
\begin{align*}
f_\mathfrak{t}e_k=m_\mathfrak{t}e_k+\sum_{\mathfrak{u}\rhd\mathfrak{t}}a_\mathfrak{u}f_\mathfrak{u}e_k=m_\mathfrak{t}e_k=m_\mathfrak{s}=f_\mathfrak{s}-\sum_{\mathfrak{v}\rhd\mathfrak{s}}\alpha_\mathfrak{v}f_\mathfrak{v},
\end{align*}
for some $a_\mathfrak{u},\alpha_\mathfrak{v}\in F$, with $\mathfrak{u},\mathfrak{v}\in\mathfrak{T}_n(\lambda)$. 

To complete the proof of the lemma, $f_\mathfrak{t}e_k=e_k(\mathfrak{t},\mathfrak{t})f_\mathfrak{t}+f_\mathfrak{s}$ implies that
\begin{align*}
xf_\mathfrak{t}e_k= f_\mathfrak{t}e_k^2=e_{k}(\mathfrak{t},\mathfrak{t})(e_{k}(\mathfrak{t},\mathfrak{t})f_\mathfrak{t}+f_\mathfrak{s})+e_k(\mathfrak{t},\mathfrak{s})f_\mathfrak{t}+e_k(\mathfrak{s},\mathfrak{s})f_\mathfrak{s},
\end{align*}
whence, comparing coefficients,
\begin{align*}
x=e_k(\mathfrak{t},\mathfrak{t})+e_k(\mathfrak{s},\mathfrak{s}),&&\text{and}&&xe_k(\mathfrak{t},\mathfrak{t})=(e_k(\mathfrak{t},\mathfrak{t}))^2+e_k(\mathfrak{t},\mathfrak{s}).
\end{align*}
Thus
\begin{align*}
e_k(\mathfrak{t},\mathfrak{s})=e_k(\mathfrak{t},\mathfrak{t})(x-e_k(\mathfrak{t},\mathfrak{t}))=e_k(\mathfrak{s},\mathfrak{s})e_k(\mathfrak{t},\mathfrak{t}).
\end{align*}
\end{proof}
\begin{corollary}\label{cor:snorm}
Let $\lambda$ be a partition of $n$, and $k$ be an integer, $1\le k<n$. Suppose that $\mathfrak{s},\mathfrak{t}\in\mathfrak{T}_n(\lambda)$ satisfy $\mathfrak{t}\rhd\mathfrak{s}$ and $e_k(\mathfrak{s},\mathfrak{t})\ne 0$. If $\SHAPE(\mathfrak{t}|_{k})=(i,k-2i)$, then 
\begin{align*}
e_k(\mathfrak{t},\mathfrak{t})=\frac{p_{k-2i+1}}{p_{k-2i+2}},&&\text{and}&&e_k(\mathfrak{s},\mathfrak{s})=\frac{p_{k-2i+3}}{p_{k-2i+2}}.
\end{align*}
\end{corollary}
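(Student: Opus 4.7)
The plan is to specialize Lemma~\ref{lem:snorm}(4) to the geometric situation forced by the hypotheses, then reduce the resulting rational expressions using two Chebyshev-style identities for the sequence $(p_j)$.

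I would first use Lemma~\ref{lem:snorm}(2) to pin down the shapes appearing in Lemma~\ref{lem:snorm}(4): the Young diagrams at levels $k-1$ and $k+1$ must coincide, which together with $\SHAPE(\mathfrak{t}|_k)=(i,k-2i)$ forces $\SHAPE(\mathfrak{t}|_{k-1})=(i-1,k-2i+1)$ and $\SHAPE(\mathfrak{t}|_{k+1})=(i,k-2i+1)$, while the dominance hypothesis $\mathfrak{t}\rhd\mathfrak{s}$ forces $\SHAPE(\mathfrak{s}|_k)=(i-1,k-2i+2)$ as the only remaining possibility. Invoking the defining formulae for $r$ and $z$ and using $p_j-xp_{j-1}=-p_{j-2}$ then gives $z_\mathfrak{s}(k-1)=z_\mathfrak{t}(k-1)=p_{i-1}p_{k-i+1}$, together with each of $r_\mathfrak{t}(k),r_\mathfrak{t}(k+1),r_\mathfrak{s}(k),r_\mathfrak{s}(k+1)$ in the form $-p_ap_b$ for explicit small indices.

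Substituted into Lemma~\ref{lem:snorm}(4), the numerators of $e_k(\mathfrak{t},\mathfrak{t})$ and $e_k(\mathfrak{s},\mathfrak{s})$ both become expressions of the shape $p_ap_b-p_{a-1}p_{b+1}$, which collapse via the Cassini-type identity
\begin{equation*}
p_ap_b-p_{a-1}p_{b+1}=p_{b-a+1}
\end{equation*}
(together with the convention $p_{-m}=-p_m$ forced by the recurrence) to $-p_{k-2i+1}$ and $p_{k-2i+3}$ respectively.

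For the common denominator, I would first apply $xp_{i-1}=p_i+p_{i-2}$ to rewrite it in the form $p_k\mp p_c(p_d-p_{d-2})$, with $(c,d)=(k-i+1,i)$ in the $\mathfrak{t}$-case and $(c,d)=(i-1,k-i+2)$ in the $\mathfrak{s}$-case. Using the convolution identity $p_cp_d=p_{c+d-1}+p_{c-1}p_{d-1}$ gives, for $A(c,d):=p_c(p_d-p_{d-2})$, the recursion
\begin{equation*}
A(c,d)=(p_{c+d-1}-p_{c+d-3})+A(c-1,d-1),
\end{equation*}
and iterating down to the first base case ($A(j,1)=2p_j$ in the $\mathfrak{t}$-case, $A(1,j)=p_j-p_{j-2}$ in the $\mathfrak{s}$-case) produces telescoping sums that evaluate to $p_k+p_{k-2i+2}$ and $p_k-p_{k-2i+2}$ respectively. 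The denominators of $e_k(\mathfrak{t},\mathfrak{t})$ and $e_k(\mathfrak{s},\mathfrak{s})$ are then $-p_{k-2i+2}$ and $+p_{k-2i+2}$, so the two ratios collapse to the stated values. The telescoping step is the main obstacle: the numerators fall out of a single Cassini identity, whereas the denominator requires both the convolution identity and careful bookkeeping of the iteration to land on the clean $p_{k-2i+2}$.
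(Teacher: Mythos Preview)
Your approach is correct and coincides with the paper's own proof: both identify the shapes at levels $k-1,k,k+1$ from Lemma~\ref{lem:snorm}(2), substitute the resulting values of $r_\mathfrak{t}(k),r_\mathfrak{t}(k+1),z_\mathfrak{t}(k-1)$ (and their $\mathfrak{s}$-analogues) into the formula of Lemma~\ref{lem:snorm}(4), and then reduce using quadratic identities among the $p_j$. The paper simply records the four needed identities (e.g.\ $p_{k-2i+1}=p_{i+1}p_{k-i+1}-p_ip_{k-i+2}$ and $p_{k-2i+2}=2p_ip_{k-i+1}-xp_{i-1}p_{k-i+1}-p_k$) as ``elementary considerations'', whereas you supply an explicit mechanism---the Cassini identity for the numerators and the convolution/telescoping argument for the denominators---to obtain them.

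Two minor remarks. First, your ``$p_k\mp p_c(p_d-p_{d-2})$'' is slightly misleading: both denominators are $p_k-A(c,d)$ with the same sign, and it is only the value of $A(c,d)$ that differs ($p_k+p_{k-2i+2}$ versus $p_k-p_{k-2i+2}$); your final computation handles this correctly. Second, your telescoping iteration tacitly assumes $i\ge 2$ in the $\mathfrak{s}$-case (so that $c=i-1\ge 1$); when $i=1$ one has $A(0,d)=0$ directly, and the conclusion still holds.
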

\begin{proof}
From~\eqref{sn:case:1} and~\eqref{sn:case:2}, we have
\begin{align*}
r_\mathfrak{t}(k)=(p_i-xp_{i-1})p_{k-i+1},&&r_{\mathfrak{t}}(k+1)&=(p_{k-i+2}-xp_{k-i+1})p_i,&&z_{\mathfrak{t}}(k-1)=p_{i-1}p_{k-i+1}\\
r_\mathfrak{s}(k)=(p_{k-i+2}-xp_{k-i+1})p_{i-1},&&r_{\mathfrak{t}}(k+1)&=(p_{i}-xp_{i-i})p_{k-i+2},&&z_{\mathfrak{t}}(k-1)=p_{i-1}p_{k-i+1}.
\end{align*}
Substituting the above into the expressions provided in item~(\ref{snorm:4}) of Lemma~\ref{lem:snorm},
\begin{align*}
e_{k}(\mathfrak{t},\mathfrak{t})&=\frac{(p_{k-i+2}-xp_{k-i+1})p_i+p_{i-1}p_{k-i+1}}{p_k-xp_{i-1}p_{k-i+1}-2(p_i-xp_{i-1})p_{k-i+1} }=\frac{p_ip_{k-i+2}-p_{i+1}p_{k-i+1}}{p_k+xp_{i-1}p_{k-i+1}-2p_ip_{k-i+1}},
\intertext{and}
e_k(\mathfrak{s},\mathfrak{s})&=\frac{(p_{i}-xp_{i-i})p_{k-i+2}+p_{i-1}p_{k-i+1}}{p_k-xp_{i-1}p_{k-i+1}-2(p_{k-i+2}-xp_{k-i+1})p_{i-1}}=\frac{p_ip_{k-i+2}-p_{i-1}p_{k-i+3}}{p_k+xp_{i-1}p_{k-i+1}-2p_{i-1}p_{k-i+2}}.
\end{align*}
The required formulae now follow from elementary considerations, namely, if $2\le 2i\le k$,
\begin{align*}
p_{k-2i+1}=p_{i+1}p_{k-i+1}-p_ip_{k-i+2},&&\text{and}&&p_{k-2i+2}=2p_ip_{k-i+1}-xp_{i-1}p_{k-i+1}-p_k;\\
p_{k-2i+3}=p_ip_{k-i+2}-p_{i-1}p_{k-i+3},&&\text{and}&&p_{k-2i+2}=p_k+xp_{i-1}p_{k-i+1}-2p_{i-1}p_{k-i+2}.
\end{align*}
\end{proof}
\begin{lemma}
Let $\lambda$ be a partition of $n$, and $k$ be an integer $1\le k<n$. Suppose that $\mathfrak{s},\mathfrak{t}\in\mathfrak{T}_n(\lambda)$ satisfy $\mathfrak{t}\rhd\mathfrak{s}$ and $e_k(\mathfrak{s},\mathfrak{t})\ne0$. If $\SHAPE(\mathfrak{t}|_k)=(i,k-2i)$, then 
\begin{align*}
\langle f_\mathfrak{s},f_\mathfrak{s} \rangle=\frac{p_{k-2i+1}(xp_{k-2i+2}-p_{k-2i+1})}{(p_{k-2i+2})^2}\langle f_\mathfrak{t},f_\mathfrak{t}\rangle=\frac{p_{k-2i+1}p_{k-2i+3}}{(p_{k-2i+2})^2}\langle f_\mathfrak{t},f_\mathfrak{t}\rangle.
\end{align*}
\end{lemma}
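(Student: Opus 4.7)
The plan is to deduce the identity from the contravariance of the cellular bilinear form $\langle\,\cdot\,,\,\cdot\,\rangle$ on $C^\lambda$ against the anti--involution $*$, which fixes $e_k$ by Lemma~\ref{saru-templ}, combined with the two--dimensional restriction of $e_k$ to the span of $f_\mathfrak{s}$ and $f_\mathfrak{t}$ that has already been computed in Lemma~\ref{lem:snorm} and Corollary~\ref{cor:snorm}.

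First I would use Lemma~\ref{lem:snorm}\,(\ref{snorm:3}) and~(\ref{snorm:4}) to record the action of $e_k$ on the pair $\{f_\mathfrak{s},f_\mathfrak{t}\}$ in the explicit form
\begin{align*}
f_\mathfrak{t}e_k=e_k(\mathfrak{t},\mathfrak{t})f_\mathfrak{t}+f_\mathfrak{s},\qquad f_\mathfrak{s}e_k=e_k(\mathfrak{t},\mathfrak{s})f_\mathfrak{t}+e_k(\mathfrak{s},\mathfrak{s})f_\mathfrak{s},
\end{align*}
with no other $f_\mathfrak{u}$ contributing on the right, and where the coefficient of $f_\mathfrak{s}$ in $f_\mathfrak{t}e_k$ is the normalisation $e_k(\mathfrak{s},\mathfrak{t})=1$. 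Next I would evaluate $\langle f_\mathfrak{t}e_k,f_\mathfrak{s}\rangle$ in two ways. Using orthogonality $\langle f_\mathfrak{u},f_\mathfrak{v}\rangle=\delta_{\mathfrak{u},\mathfrak{v}}\langle f_\mathfrak{u},f_\mathfrak{u}\rangle$, the first formula above gives $\langle f_\mathfrak{t}e_k,f_\mathfrak{s}\rangle=\langle f_\mathfrak{s},f_\mathfrak{s}\rangle$; applying contravariance $\langle vb,w\rangle=\langle v,wb^*\rangle$ with $b=e_k=e_k^*$ and then the second formula gives $\langle f_\mathfrak{t}e_k,f_\mathfrak{s}\rangle=\langle f_\mathfrak{t},f_\mathfrak{s}e_k\rangle=e_k(\mathfrak{t},\mathfrak{s})\langle f_\mathfrak{t},f_\mathfrak{t}\rangle$.

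Equating the two expressions, and then invoking the product rule $e_k(\mathfrak{t},\mathfrak{s})=e_k(\mathfrak{s},\mathfrak{s})e_k(\mathfrak{t},\mathfrak{t})$ of Lemma~\ref{lem:snorm}\,(\ref{snorm:4}), I obtain
\begin{align*}
\langle f_\mathfrak{s},f_\mathfrak{s}\rangle=e_k(\mathfrak{s},\mathfrak{s})e_k(\mathfrak{t},\mathfrak{t})\langle f_\mathfrak{t},f_\mathfrak{t}\rangle.
\end{align*}
Inserting the closed forms $e_k(\mathfrak{t},\mathfrak{t})=p_{k-2i+1}/p_{k-2i+2}$ and $e_k(\mathfrak{s},\mathfrak{s})=p_{k-2i+3}/p_{k-2i+2}$ from Corollary~\ref{cor:snorm} yields the second claimed expression, and the first equality in the statement then follows from a single application of the defining recurrence $p_{k-2i+3}=xp_{k-2i+2}-p_{k-2i+1}$.

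There is no substantive obstacle here beyond the bookkeeping: all the heavy lifting is already packaged in Lemma~\ref{lem:snorm} and Corollary~\ref{cor:snorm}, and the argument consists of one application of contravariance followed by a substitution. The only point requiring care is the normalisation $e_k(\mathfrak{s},\mathfrak{t})=1$, which is what makes the cross pairing $\langle f_\mathfrak{t}e_k,f_\mathfrak{s}\rangle$ collapse cleanly to $\langle f_\mathfrak{s},f_\mathfrak{s}\rangle$ rather than to a scalar multiple thereof.
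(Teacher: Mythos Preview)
Your proof is correct and follows essentially the same approach as the paper: both arguments use contravariance of the bilinear form together with the matrix entries of $e_k$ from Lemma~\ref{lem:snorm} and Corollary~\ref{cor:snorm}. The only cosmetic difference is that the paper expands $\langle f_\mathfrak{t}e_k,f_\mathfrak{t}e_k\rangle$ and invokes $e_k^2=xe_k$ to obtain the factor $e_k(\mathfrak{t},\mathfrak{t})(x-e_k(\mathfrak{t},\mathfrak{t}))$, whereas you expand the cross pairing $\langle f_\mathfrak{t}e_k,f_\mathfrak{s}\rangle$ and invoke $e_k(\mathfrak{t},\mathfrak{s})=e_k(\mathfrak{s},\mathfrak{s})e_k(\mathfrak{t},\mathfrak{t})$ directly; these are equivalent via the relation $x=e_k(\mathfrak{t},\mathfrak{t})+e_k(\mathfrak{s},\mathfrak{s})$ established in the proof of Lemma~\ref{lem:snorm}.
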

\begin{proof}
Since $f_\mathfrak{t}e_k=e_k(\mathfrak{t},\mathfrak{t})f_\mathfrak{t}+f_\mathfrak{s}$,
\begin{align*}
\langle f_\mathfrak{t}e_k,f_\mathfrak{t}e_k\rangle=(e_k(\mathfrak{t},\mathfrak{t}))^2\langle f_\mathfrak{t},f_\mathfrak{t}\rangle+\langle f_\mathfrak{s},f_\mathfrak{s}\rangle,
\end{align*}
while associativity of the bilinear form implies that
\begin{align*}
\langle f_\mathfrak{t}e_k,f_\mathfrak{t}e_k\rangle=x\langle f_\mathfrak{t}e_k,f_\mathfrak{t}\rangle=xe_k(\mathfrak{t},\mathfrak{t})\langle f_\mathfrak{t},f_\mathfrak{t}\rangle.
\end{align*}
Thus  $\langle f_\mathfrak{s},f_\mathfrak{s} \rangle=e_k(\mathfrak{t},\mathfrak{t})(x-e_k(\mathfrak{t},\mathfrak{t}))\langle f_\mathfrak{t},f_\mathfrak{t}\rangle$, and the result follows from Corollary~\ref{cor:snorm}.
\end{proof}

\section{The Determinant of the Gram Matrix}
If $\lambda$ is a partition of $n$, let $\dim(\lambda)=\sharp\{\mathfrak{t}:\mathfrak{t}\in\mathfrak{T}_n(\lambda)\}$ and write $\det(\lambda)=\prod_{\mathfrak{t}\in\mathfrak{T}_n(\lambda)}\langle f_\mathfrak{t},f_\mathfrak{t}\rangle$ for the determinant of the Gram matrix associated with $C^\lambda$. The next result is a branching law. 
\begin{lemma}\label{lem:branch}
Let $\lambda=(f,n-2f)$ and $\mu=(f-1,n-2f+1)$ be partitions, where $n-2f\ge 0$ and $f\ge 1$. Then
\begin{align*}
\det(\lambda)=\prod_{\nu\to\lambda}\det(\nu)\cdot\left(\frac{p_{n-2f+2}}{p_{n-2f+1}}\right)^{\dim(\mu)}
\end{align*}
\end{lemma}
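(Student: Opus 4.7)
The plan is to partition $\mathfrak{T}_n(\lambda)$ into the two subsets $S_{\nu_1}$ and $S_\mu$ according to whether $\SHAPE(\mathfrak{t}|_{n-1})$ equals $\nu_1=(f,n-2f-1)$ or $\mu=(f-1,n-2f+1)$. Restriction $\mathfrak{t}\mapsto\mathfrak{t}|_{n-1}$ gives bijections $S_\nu\leftrightarrow\mathfrak{T}_{n-1}(\nu)$, so the branching law will follow from establishing the per-tableau branching ratios
\[
\frac{\langle f_\mathfrak{t},f_\mathfrak{t}\rangle_{C^\lambda}}{\langle f_{\mathfrak{t}|_{n-1}},f_{\mathfrak{t}|_{n-1}}\rangle_{C^\nu}}=\begin{cases}1,&\nu=\nu_1,\\ p_{n-2f+2}/p_{n-2f+1},&\nu=\mu,\end{cases}
\]
and then multiplying over $\mathfrak{t}\in\mathfrak{T}_n(\lambda)$, since $\prod_{\mathfrak{t}\in S_\nu}\langle f_{\mathfrak{t}|_{n-1}},f_{\mathfrak{t}|_{n-1}}\rangle_{C^\nu}=\det(\nu)$ and $|S_\mu|=\dim(\mu)$.

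For $\mathfrak{t}\in S_{\nu_1}$: the construction of the Murphy basis gives $v_\mathfrak{t}=v_{\mathfrak{t}|_{n-1}}$, and since $m_\lambda=e_1e_3\cdots e_{2f-1}=m_{\nu_1}$, the basis element $m_\mathfrak{t}$ coincides with $m_{\mathfrak{t}|_{n-1}}$ in the ambient algebra. Combined with the $\mathcal{A}_{n-1}$-module isomorphism $N^{\nu_1}\cong C^{\nu_1}$ from Lemma~\ref{prel:a} and the cellular definition of the inner product, the seminormal vector $f_\mathfrak{t}$ corresponds to $f_{\mathfrak{t}|_{n-1}}$ under this isomorphism. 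The extra seminormal factor involving $T_n$ acts as the common eigenvalue $r_\mathfrak{t}(n)$ on this subspace and does not change the norm, so the ratio is $1$ as claimed.

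For $\mathfrak{t}\in S_\mu$: if $\mathfrak{t}|_{n-2}=(f-1,n-2f)$, there is a partner $\mathfrak{t}'\in S_{\nu_1}$ with $v_\mathfrak{t}=v_{\mathfrak{t}'}e_{n-1}$ by the unnamed lemma at the start of Section~2. The lemma immediately preceding the branching lemma (at $k=n-1$, $i=f$) gives
\[
\langle f_\mathfrak{t},f_\mathfrak{t}\rangle_{C^\lambda}=\frac{p_{n-2f}\,p_{n-2f+2}}{p_{n-2f+1}^2}\langle f_{\mathfrak{t}'},f_{\mathfrak{t}'}\rangle_{C^\lambda}.
\]
Applying the $S_{\nu_1}$ ratio to $\mathfrak{t}'$ and then Corollary~\ref{cor:snorm} within $\mathcal{A}_{n-1}$ to relate $\langle f_{\mathfrak{t}'|_{n-1}},f_{\mathfrak{t}'|_{n-1}}\rangle_{C^{\nu_1}}$ and $\langle f_{\mathfrak{t}|_{n-1}},f_{\mathfrak{t}|_{n-1}}\rangle_{C^\mu}$ (whose common $(n-2)$-restriction is $(f-1,n-2f)$) produces a compensating factor $p_{n-2f}/p_{n-2f+1}$, and the two factors combine to yield the branching ratio $p_{n-2f+2}/p_{n-2f+1}$. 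For the remaining $\mathfrak{t}\in S_\mu$ (with $\mathfrak{t}|_{n-2}=(f-2,n-2f+2)$), $e_{n-1}$ acts diagonally on $f_\mathfrak{t}$, forced to act as $x$ since $e_{n-1}^2=xe_{n-1}$ and there is no partner term; a direct norm computation analogous to the proof of Lemma~\ref{lem:snorm}(4) then yields the same ratio.

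The main obstacle will be handling both sub-cases of $S_\mu$ uniformly. The partnered sub-case reduces to the pre-lemma together with a cross-module comparison via Corollary~\ref{cor:snorm}, but the unpaired sub-case requires a separate direct computation of the diagonal $e_{n-1}$-eigenvalue on $f_\mathfrak{t}$ with no partner to play against. Verifying that both routes produce the same branching factor $p_{n-2f+2}/p_{n-2f+1}$ — depending only on the shape $\mu$ and not on where $\mathfrak{t}$ sits in the Bratteli diagram — is the delicate step of the proof.
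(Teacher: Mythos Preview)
Your overall plan---prove a per-tableau branching ratio and multiply---is exactly the paper's, and your treatment of $S_{\nu_1}$ is fine. But the $S_\mu$ argument has two genuine gaps.

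\textbf{The unpaired sub-case fails.} When $\SHAPE(\mathfrak{t}|_{n-2})=(f-2,n-2f+2)$ you claim $e_{n-1}$ acts on $f_\mathfrak{t}$ as the scalar $x$. But Lemma~\ref{lem:snorm}(\ref{snorm:2}) says that $e_{n-1}(\mathfrak{s},\mathfrak{t})\ne 0$ forces $\mathfrak{t}^{(n-2)}=\mathfrak{t}^{(n)}$; here $\mathfrak{t}^{(n-2)}=(f-2,n-2f+2)\ne(f,n-2f)=\mathfrak{t}^{(n)}$, so in fact $f_\mathfrak{t}e_{n-1}=0$. The relation $e_{n-1}^2=xe_{n-1}$ only pins the eigenvalue to $\{0,x\}$, and here it is $0$, so $e_{n-1}$ yields no information about $\langle f_\mathfrak{t},f_\mathfrak{t}\rangle$. \textbf{The cross-module step in the paired sub-case is not justified.} Corollary~\ref{cor:snorm} and the norm-ratio lemma that follows it compare $\langle f_\mathfrak{s},f_\mathfrak{s}\rangle$ and $\langle f_\mathfrak{t},f_\mathfrak{t}\rangle$ only when $\mathfrak{s},\mathfrak{t}$ lie in the \emph{same} cell module. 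Your comparison of $\langle f_{\mathfrak{t}'|_{n-1}},f_{\mathfrak{t}'|_{n-1}}\rangle_{C^{\nu_1}}$ with $\langle f_{\mathfrak{t}|_{n-1}},f_{\mathfrak{t}|_{n-1}}\rangle_{C^\mu}$ runs between two distinct cell modules at level $n-1$; nothing stated covers it. You could bridge them via the common restriction at level $n-2$, but that is precisely the branching lemma one level down---circular unless you set up an explicit induction on $n$, which you do not.

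The paper avoids both problems by never splitting $S_\mu$ according to $\mathfrak{t}|_{n-2}$. Instead it inducts on dominance inside $\mathfrak{T}_{n-1}(\mu)$. For the base case $\mathfrak{u}=\mathfrak{t}^\mu$ it builds a chain $\mathfrak{s}_0=\mathfrak{t}^\lambda\rhd\mathfrak{s}_1\rhd\cdots\rhd\mathfrak{s}_{n-2f}$ entirely inside $\mathfrak{T}_n(\lambda)$ with $v_{\mathfrak{s}_i}=w_{2f,2f+i}$ and $\mathfrak{s}_{n-2f}|_{n-1}=\mathfrak{t}^\mu$; successive applications of the norm-ratio lemma telescope, and the elementary identity $\langle f_{\mathfrak{t}^\lambda},f_{\mathfrak{t}^\lambda}\rangle=p_2\langle f_{\mathfrak{t}^\mu},f_{\mathfrak{t}^\mu}\rangle$ (from $m_\lambda^2=x^f m_\lambda$) closes the computation. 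For the inductive step, if $\mathfrak{u}\ne\mathfrak{t}^\mu$ then $v_\mathfrak{u}=v_\mathfrak{v}e_k$ for some $\mathfrak{v}\rhd\mathfrak{u}$ in $\mathfrak{T}_{n-1}(\mu)$ with $k<n-1$; the norm-ratio lemma gives the \emph{same} factor $p_{k-2i+1}p_{k-2i+3}/p_{k-2i+2}^2$ at level $n$ (between the extensions of $\mathfrak{v},\mathfrak{u}$ to $\mathfrak{T}_n(\lambda)$) and at level $n-1$ (between $\mathfrak{v},\mathfrak{u}$ themselves), so the branching ratio is inherited from $\mathfrak{v}$. Every comparison stays inside a single cell module, and $e_{n-1}$ is used only along the base chain where it genuinely acts nontrivially.
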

\begin{proof}
The result will follow once we show that if $\mathfrak{s}\in\mathfrak{T}_n(\lambda)$ and $\mathfrak{u}=\mathfrak{s}|_{n-1}$, then 
\begin{align}\label{re:stat}
\langle f_\mathfrak{s}, f_\mathfrak{s}\rangle =\langle f_\mathfrak{u},f_\mathfrak{u}\rangle \frac{p_{n-2f+2}}{p_{n-2f+1}},&&\text{whenever $\SHAPE(\mathfrak{u})=\mu$}.
\end{align}
To prove the statement~\eqref{re:stat}, we first consider the case where $\mathfrak{u}=\mathfrak{t}^\mu$. To this purpose define a sequence $(v_{\mathfrak{s}_i}\in\mathfrak{T}_n(\lambda):i=0,\dots,n-2f)$ by $v_{\mathfrak{s}_i}=w_{2f,2f+i}$, for $0\le i\le n-2f$. It follows that $\SHAPE(\mathfrak{s}_{i}|_{2f+i-1})=(f-1,i+1)$ and $\SHAPE(\mathfrak{s}_i|_{2f+i})=(f,i)$, while $\mathfrak{s}_{i}\rhd\mathfrak{s}_{i+1}$ and $e_{2f+i}(\mathfrak{s}_{i+1},\mathfrak{s}_i)\ne0$ for $0\le i<n-2f$. Hence 
\begin{align*}
\langle f_{\mathfrak{s}_{i+1}},f_{\mathfrak{s}_{i+1}}\rangle=\langle f_{\mathfrak{s}_{i}},f_{\mathfrak{s}_{i}}\rangle \frac{p_{i+1}p_{i+3}}{(p_{i+2})^2},
\end{align*}
and
\begin{align*}
\langle f_{\mathfrak{s}_{n-2f}},f_{\mathfrak{s}_{n-2f}}\rangle=\langle f_{\mathfrak{s}_0},f_{\mathfrak{s}_0}\rangle
\frac{p_{1}p_{3}}{(p_{2})^2}\frac{p_{2}p_{4}}{(p_{3})^2}\frac{p_{3}p_{5}}{(p_{4})^2}\cdots \frac{p_{n-2f}p_{n-2f+2}}{(p_{n-2f+1})^2}=\langle f_{\mathfrak{s}_0},f_{\mathfrak{s}_0}\rangle\frac{p_{1}}{p_{2}}\frac{p_{n-2f+2}}{p_{n-2f+1}}.
\end{align*}
Since $\mathfrak{s}=\mathfrak{s}_{n-2f}$, and $\langle f_{\mathfrak{s}_0},f_{\mathfrak{s}_0}\rangle=p_2\langle f_{\mathfrak{u}},f_{\mathfrak{u}}\rangle$, the above verifies~\eqref{re:stat} when $\mathfrak{u}=\mathfrak{t}^\mu$. 

Now suppose that $\mathfrak{s}|_{n-1}=\mathfrak{u}\in\mathfrak{T}_{n-1}(\mu)$, and that $\mathfrak{t}^\mu\rhd\mathfrak{u}$. Then there exists $\mathfrak{v}\in\mathfrak{T}_{n-1}(\mu)$ such that $\mathfrak{v}\rhd \mathfrak{u}$ and $v_\mathfrak{u}=v_\mathfrak{v}e_k$, for some $k$ with $1\le k<n$. If $\mathfrak{t}\in\mathfrak{T}_n(\lambda)$ satisfies $\mathfrak{t}|_{n-1}=\mathfrak{v}$, then $\mathfrak{t}\rhd\mathfrak{s}$ and 
\begin{align*}
\langle f_\mathfrak{s},f_\mathfrak{s}\rangle = \langle f_\mathfrak{t},f_\mathfrak{t}\rangle \frac{p_{k-2i+1}p_{k-2i+3}}{(p_{k-2i+2})^2},&&\text{where $(i,k-2i)=\SHAPE(\mathfrak{t}|_k)$} 
\end{align*}
while, by induction, 
\begin{align*}
\langle f_\mathfrak{t},f_\mathfrak{t}\rangle=\langle f_\mathfrak{v},f_\mathfrak{v}\rangle\frac{p_{n-2f+2}}{p_{n-2f+1}}.
\end{align*}
Since
\begin{align*}
\langle f_\mathfrak{u},f_\mathfrak{u}\rangle=\langle f_\mathfrak{v},f_\mathfrak{v}\rangle \frac{p_{k-2i+1}p_{k-2i+3}}{(p_{k-2i+2})^2},&&\text{where $(i,k-2i)=\SHAPE(\mathfrak{v}|_k)$,}
\end{align*}
it follows that~\eqref{re:stat} holds in general.
\end{proof}
If $\lambda=(i,n-2i)$, and $\mu=(j,n-2j)$ are partitions, with $\mu\rhd\lambda$, define
\begin{align*}
g_{\lambda,\mu}=
\biggl[\frac{p_{n-i-j+1}}{p_{i-j}}\biggr]^{\dim(\mu)}
\end{align*}
Since the dimensions of modules $C^\mu$ are given in terms of certain binomial coefficients~\cite{ramhalv:jb}, the next statement gives closed formulae for the Gram determinants associated with the Temperley--Lieb algebras (\emph{cf}. Corollary~4.7 of~\cite{grahamlehrer:1998}).
\begin{lemma}
Let $\lambda=(f,n-2f)$ be a partition. If $n-2f>0$, then
\begin{align*}
\det(\lambda)=\prod_{\mu\rhd\lambda}g_{\lambda,\mu},
\end{align*}
and, if $n-2f=0$, then $\det(\lambda)=\det(\mu)\cdot x^{\dim(\lambda)}$, where $\mu=(f-1,n-2f+1)$. 
\end{lemma}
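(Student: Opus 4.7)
The plan is to argue by induction on $n$, using Lemma~\ref{lem:branch} (the branching law) at each step. The base cases $n\le 1$ are immediate, since the relevant cell modules are one-dimensional and the empty product convention gives the claim.

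For the inductive step with $\lambda=(f,n-2f)$, I would split into two cases. In the case $n-2f=0$, the partition $\lambda=(f,0)$ has a unique parent $\mu=(f-1,1)$ in the Bratteli diagram, so Lemma~\ref{lem:branch} specializes to
$$\det(\lambda)=\det(\mu)\cdot\left(\frac{p_2}{p_1}\right)^{\dim(\mu)}=\det(\mu)\cdot x^{\dim(\mu)}.$$
A direct path count shows $\dim(\lambda)=\dim(\mu)$, because every up--down tableau in $\mathfrak{T}_n(\lambda)$ restricts uniquely to one in $\mathfrak{T}_{n-1}(\mu)$; this immediately yields the stated formula $\det(\lambda)=\det(\mu)\cdot x^{\dim(\lambda)}$.

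In the case $n-2f>0$, Lemma~\ref{lem:branch} reads
$$\det(\lambda)=\prod_{\nu\to\lambda}\det(\nu)\cdot\left(\frac{p_{n-2f+2}}{p_{n-2f+1}}\right)^{\dim(\mu)},$$
where $\mu=(f-1,n-2f+1)$. Applying the inductive hypothesis to each $\det(\nu)$ (viewed as a determinant for $\mathcal{A}_{n-1}$) expresses the right hand side as a product of $g$-factors indexed by partitions of $n-1$, together with the boundary term $(p_{n-2f+2}/p_{n-2f+1})^{\dim(\mu)}$. I would then reindex using the dimension recursion
$$\dim((k,n-2k))=\dim((k,n-1-2k))+\dim((k-1,n-2k+1)),$$
splitting each target factor $g_{\lambda,\mu'}^{\dim(\mu')}$ into two pieces, each matching a contribution from one of the two parents $\nu\to\lambda$ via the natural correspondences between partitions of $n$ and partitions of $n-1$.

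The main obstacle is the algebraic bookkeeping of the ratios of $p_k$'s produced by this reindexing. It ultimately reduces to an identity among the Chebyshev-like polynomials $p_k$---for instance, $p_{a+b+1}=p_{a+1}p_{b+1}-p_a p_b$, which follows directly from the three-term recursion $p_{k+1}=xp_k-p_{k-1}$. Once this identity is isolated, the verification is a routine, if tedious, cancellation of numerators and denominators, combined with careful tracking of the exponents $\dim(\mu')$ on both sides and separate attention to the boundary index $k=f$ where the factor $(p_{n-2f+2}/p_{n-2f+1})^{\dim(\mu)}$ is absorbed.
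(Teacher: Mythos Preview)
Your approach is essentially the paper's: induction on $n$ via the branching law, followed by reindexing with the dimension recursion and cancellation of the $p_k$ ratios. The paper carries out exactly this computation, organising it through the auxiliary notation $\lambda^{(i)}=(f-i,n-2f+2i)$ and $\mu^{(i)}=(f-i,n-2f+2i-1)$.

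There is one case split you have glossed over. When $n-2f>0$ you write that the inductive hypothesis ``expresses the right hand side as a product of $g$-factors indexed by partitions of $n-1$''. But if $n-2f=1$, one of the parents is $\nu=(f,0)$, and the inductive hypothesis for $\nu$ is \emph{not} of the product-of-$g$'s form: it reads $\det(\nu)=\det((f-1,1))\cdot x^{\dim(\nu)}$. So you must either unfold one further level of the recursion (applying the product formula to $(f-1,1)$) or, as the paper does, treat $n-2f=1$ as a separate subcase alongside $n-2f>1$. The algebra in that subcase is a bit different---you pick up an extra factor of $p_2^{\dim(\mu^{(0)})}$ from the $x^{\dim(\nu)}$ term, and the exponent bookkeeping now involves dimensions at three consecutive levels rather than two---so it is not quite absorbed by the ``routine, if tedious, cancellation'' you describe for the generic case. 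Once you isolate this, the rest of your sketch is accurate.
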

\begin{proof}
We first assume that $n-2f>0$. Let
\begin{align*}
\lambda^{(i)}=(f-i,n-2f+2i), &&\text{and} &&\mu^{(i)}=(f-i,n-2f+2i-1),&&\text{for $i=0,1,\dots,f$.}
\end{align*}
Then $\lambda=\lambda^{(0)}\rhd\lambda^{(1)}\rhd\dots\rhd\lambda^{(f)}$ and $\mu=\mu^{(0)}\rhd\mu^{(1)}\rhd\dots\rhd\mu^{(f)}$, while $\mu^{(i)}\to\lambda^{(i)}$ for $i=0,\dots,f$, and $\mu^{(i+1)}\to\lambda^{(i)}$ for $i=0,\dots,f-1$. If $n-2f>1$, then from Lemma~\ref{lem:branch} and induction, 
\begin{align*}
\det(\lambda)&=\det\bigl(\mu^{(0)}\bigr)\det\bigl(\mu^{(1)}\bigr)\Bigl(\frac{p_{n-2f+2}}{p_{n-2f+1}}\Bigr)^{\dim(\mu^{(1)})}\\
&=\prod_{i=1}^f g_{\mu^{(0)},\mu^{(i)}}\cdot\prod_{j=1}^{f-1}g_{\mu^{(1)},\mu^{(j+1)}}\cdot\Bigl(\frac{p_{n-2f+2}}{p_{n-2f+1}}\Bigr)^{\dim(\mu^{(1)})},
\end{align*}
where, for $i=1,\dots,f$, and $j=1,\dots,f-1$, 
\begin{align*}
g_{\mu^{(0)},\mu^{(i)}}=\Bigl(\frac{p_{n-2f+i}}{p_i}\Bigr)^{\dim(\mu^{(i)})}&&\text{and}&&
g_{\mu^{(1)},\mu^{(j+1)}}=\Bigl(\frac{p_{n-2f+j+2}}{p_{j}}\Bigr)^{\dim(\mu^{(j+1)})}.
\end{align*}
Thus, 
\begin{align*}
\det(\lambda)&=\prod_{i=1}^f\Bigl(\frac{p_{n-2f+i}}{p_i}\Bigr)^{\dim(\mu^{(i)})}\cdot\prod_{i=1}^{f-1}\Bigl(\frac{p_{n-2f+i+2}}{p_{i}}\Bigr)^{\dim(\mu^{(i+1)})}\cdot\Bigl(\frac{p_{n-2f+2}}{p_{n-2f+1}}\Bigr)^{\dim(\mu^{(1)})}\\
&=\prod_{i=3}^f\Bigl(\frac{p_{n-2f+i}}{p_i}\Bigr)^{\dim(\mu^{(i)})}\cdot\prod_{i=1}^{f-2}\Bigl(\frac{p_{n-2f+i+2}}{p_{i}}\Bigr)^{\dim(\mu^{(i+1)})}\\
&\qquad\qquad\times\Bigl(\frac{p_{n-2f+2}}{p_1}\Bigr)^{\dim(\mu^{(1)})+\dim(\mu^{(2)})}\Bigl(\frac{p_{n-f+1}}{p_f}\Bigr)^{\dim(\mu^{(f)})}\biggl(\frac{1}{p_2}\biggr)^{\dim(\mu^{(2)})}\\
&=\Bigl(\frac{p_{n-f+1}}{p_f}\Bigr)^{\dim(\mu^{(f)})}\cdot\prod_{i=1}^{f-1}\Bigl(\frac{p_{n-2f+i+1}}{p_i}\Bigr)^{\dim(\mu^{(i)})+\dim(\mu^{(i+1)})}\\
&=\prod_{i=1}^{f}\Bigl(\frac{p_{n-2f+i+1}}{p_i}\Bigr)^{\dim(\lambda^{(i)})},
\end{align*}
where
\begin{align*}
\dim(\lambda^{(i)})=\begin{cases}
\dim\bigl(\mu^{(i)}\bigr),&\text{if $i=f$;}\\
\dim\bigl(\mu^{(i)}\bigr)+\dim\bigl(\mu^{(i+1)}\bigr),&\text{otherwise.}
\end{cases}
\end{align*}
On the other hand, 
\begin{align*}
g_{\lambda,\lambda^{(i)}}=\Bigl(\frac{p_{n-2f+i+1}}{p_i}\Bigr)^{\dim(\lambda^{(i)})}, &&\text{for $i=1,\dots,f$,}
\end{align*}
implies that 
\begin{align*}
\prod_{i=1}^fg_{\lambda,\lambda^{(i)}}=\prod_{i=1}^f\Bigl(\frac{p_{n-2f+i+1}}{p_{i}}\Bigr)^{\dim(\lambda^{(i)})}.
\end{align*}
Now suppose that $n-2f=1$ and, for $i=1,\dots,f-1$, let $\nu^{(i)}=(f-i-1,n+2i-2)$. Then, by induction,
\begin{align*}
\det\bigl(\nu^{(0)}\bigr)=\prod_{i=1}^{f-1}\biggl(\frac{p_{n-2f+i+1}}{p_{i}}\biggr)^{\dim(\nu^{(i)})}&&\text{and}&&\det\bigl(\mu^{(0)}\bigr)=\det\bigl(\nu^{(0)}\bigr)\cdot (p_2)^{\dim(\mu^{(0)})}.
\end{align*}
Further, Lemma~\ref{lem:branch} and induction imply that
\begin{align*}
\det(\lambda)&=\det\bigl(\mu^{(0)}\bigr)\det\bigl(\mu^{(1)}\bigr)\biggl(\frac{p_3}{p_2}\biggr)^{\dim(\mu^{(1)})}\\
&=\prod_{i=1}^{f-1}\biggl(\frac{p_{n-2f+i+1}}{p_{i}}\biggr)^{\dim(\nu^{(i)})}\cdot\prod_{i=1}^{f-1}\biggl(\frac{p_{n-2f+i+2}}{p_i}\biggr)^{\dim(\mu^{(i+1)})}\biggl(\frac{1}{p_2}\biggr)^{\dim(\nu^{(1)})}(p_3)^{\dim(\mu^{(1)})}\\
&=\prod_{i=1}^{f-1}\biggl(\frac{p_{i+2}}{p_{i}}\biggr)^{\dim(\nu^{(i)})}\cdot\prod_{i=1}^{f-1}\biggl(\frac{p_{i+3}}{p_i}\biggr)^{\dim(\mu^{(i+1)})}\biggl(\frac{1}{p_2}\biggr)^{\dim(\nu^{(1)})}(p_3)^{\dim(\mu^{(1)})}.
\end{align*}
In the above expression, the exponent of $p_i$ is $j_i$, where
\begin{align*}
j_i=
\begin{cases}
-\dim\bigl(\nu^{(2)}\bigr)-\dim\bigl(\mu^{(3)}\bigr)-\dim\bigl(\nu^{(1)}\bigr),&\text{if $i=2$;}\\
\dim\bigl(\nu^{(1)}\bigr)-\dim\bigl(\nu^{(3)}\bigr)-\dim\bigl(\mu^{(4)}\bigr)+\dim\bigl(\mu^{(1)}\bigr),&\text{if $i=3$;}\\
\dim\bigl(\nu^{(i-2)}\bigr)-\dim\bigl(\nu^{(i)}\bigr)+\dim\bigl(\mu^{(i-2)}\bigr)-\dim\bigl(\nu^{(i+1)}\bigr),&\text{if $4\le i <f$;}\\
\dim\bigl(\nu^{(i-2)}\bigr)+\dim\bigl(\mu^{(i-2)}\bigr),&\text{if $f\le i\le f+1$;}\\
\dim\bigl(\mu^{(i-2)}\bigr),&\text{if $i=f+2$;}\\
0,&\text{otherwise.}
\end{cases}
\end{align*}
On the other hand, with $n-2f=1$, 
\begin{align*}
\prod_{i=1}^fg_{\lambda,\lambda^{(i)}}=\prod_{i=1}^f\Bigl(\frac{p_{n-2f+i+1}}{p_{i}}\Bigr)^{\dim(\lambda^{(i)})}=\prod_{i=1}^f\Bigl(\frac{p_{i+2}}{p_{i}}\Bigr)^{\dim(\lambda^{(i)})}.
\end{align*}
Since relative positions on the Bratteli diagram associated with $\mathcal{A}_n(x)$ imply that
\begin{align*}
j_i=
\begin{cases}
-\dim\bigl(\lambda^{(i)}\bigr),&\text{if $i=2$;}\\
\dim\bigl(\lambda^{(i-2)}\bigr)-\dim\bigl(\lambda^{(i)}\bigr),&\text{if $3\le i \le f$;}\\
\dim\bigl(\lambda^{(i-2)}\bigr),&\text{if $f<i\le f+2$,}
\end{cases}
\end{align*}
the lemma holds in the case where $n-2f=1$. If $n-2f=0$, the given formula for $\det(\lambda)$ follows directly from Lemma~\ref{lem:branch}.
\end{proof}
\begin{example}
If $n=11$ and $\lambda=(5,1)$, then 
\begin{align*}
\det(\lambda)=\Bigl(\frac{p_7}{p_5}\Bigr)\Bigl(\frac{p_6}{p_4}\Bigr)^{10}\Bigl(\frac{p_5}{p_3}\Bigr)^{44}\Bigl(\frac{p_4}{p_2}\Bigr)^{110}\Bigl(\frac{p_3}{p_1}\Bigr)^{165}.
\end{align*}
To write the above expression as a product in $\mathbb{Z}[x]$, we apply~\eqref{stat:0} with $k=1$, 
\begin{align*}
\det(\lambda)&=p_7\biggl(\frac{p_3(p_4-p_2)}{p_2(p_3-p_1)}\biggr)^{10}(p_5)^{43}(p_3-p_1)^{110}(p_3)^{121}\\
&=p_7(p_3-2p_1)^{10}(p_5)^{43}(p_3-p_1)^{100}(p_3)^{131}.
\end{align*}
\end{example}
\begin{remark}
The above results show that the the Temperley--Lieb algebras, besides being cellular in the sense of~\cite{grahamlehrer}, are equipped with a family of Jucys--Murphy elements satisfying the ``separation condition'' defined by A.~Mathas~\cite{mathas:2006}. In a forthcoming note, we demonstrate a similar construction for the partition algebras of~\cite{ram-halvpart}.
\end{remark}

\bibliographystyle{plain}


\end{document}